\documentclass[11pt,a4paper]{article}
\usepackage[latin1]{inputenc}
\usepackage{amsmath}
\usepackage{amsthm}
\usepackage{amsfonts}
\usepackage{amsfonts,amsthm,latexsym,amsmath,amssymb,amscd,epsfig,psfrag,enumerate}
\usepackage{graphics,graphicx, bezier, float, color, hyperref}
\usepackage{amssymb,url}
\usepackage{multienum}
\usepackage[table]{xcolor}
\usepackage{multicol,multirow}
\usepackage{graphicx}
\usepackage{fancyvrb}
\usepackage[none]{hyphenat}
\usepackage{parskip}
\usepackage[toc,page]{appendix}
\usepackage{mathrsfs}
\usepackage[top=2.7cm, bottom=2.7cm, left=1.5cm, right=1.5cm]{geometry}
\usepackage{xcolor}

\makeatletter
\newcommand*{\house}[1]{%
	\mathord{%
		\mathpalette\@house{#1}%
	}%
}
\newcommand*{\@house}[2]{%
	\dimen@=\fontdimen8 %
	\ifx#1\scriptscriptstyle\scriptscriptfont
	\else\ifx#1\scriptstyle\scriptfont
	\else\textfont\fi\fi
	3 %
	\sbox0{%
		$#1%
		\vrule width\dimen@\relax
		\overline{%
			\kern2\dimen@
			\begingroup 
			#2%
			\endgroup
			\kern2\dimen@
		}%
		\vrule width\dimen@\relax
		\mathsurround=1.5\dimen@ 
		$%
	}%
	\ht0=\dimexpr\ht0-\dimen@\relax
	\dp0=\dimexpr\dp0+2\dimen@\relax
	\vbox{%
		\kern\dimen@ 
		\copy0 %
	}%
}

\usepackage{blkarray}
\newtheorem{theorem}{Theorem}[section]

\newtheorem{lemma}{Lemma}[section]

\usepackage[none]{hyphenat}[section]
\newtheorem{definition}{Definition}[section]

\numberwithin{equation}{section}
\numberwithin{table}{section}
\numberwithin{figure}{section}

\title{Product of powers of distinct primes as sums of Fibonacci numbers}
\author{Herbert Batte$^{1,*} $, Florian Luca$^{1}$ and Volker Ziegler$^2$}
\date{}

\begin{document}
\maketitle
\abstract{ Let $F_n$ be the $n$-th Fibonacci number. In this paper, we study the Diophantine equation $F_n+F_m=p^xq^y$ in nonnegative integers $n\ge m$, $x$ and $y$, where $p$ and $q$ are fixed distinct prime numbers. We determine all pairs of primes $(q,p)$ with $q\le \min\{1000,p\}$ such that the above equation has at least two solutions $(x,y)$ (and corresponding $m,n$) in positive 
integers.}

{\bf Keywords and phrases}: Baker's method, Fibonacci numbers, Diophantine equations
 
{\bf 2020 Mathematics Subject Classification}: 11B39, 11D61, 11D45, 11Y50.

\thanks{$ ^{*} $ Corresponding author}

\section{Introduction}
\subsection{Background}

Let $F_n$ denote the $n$-th Fibonacci number defined by $F_0=0$, $F_1=1$ and the recurrence $F_{n+2}=F_{n+1}+F_n$ for all $n\geq 0$. The first few terms are given by 
$$  0,\;1,\;1,\;2,\;3,\;5,\;8,\;13,\;21,\;34,\;55,\;89,\;144,\;\ldots. $$
Closely related to the sequence of Fibonacci numbers is the companion sequence of Lucas numbers $(L_n)_{n\ge 0}$ given by $L_0=2,~L_1=1,~L_{n+2}=L_{n+1}+L_n$ for all $n\ge 0$. Its first terms are 
$$
2,\; 1,\; 3,\; 4,\; 7,\; 11,\; 18,\; 29,\; 47,\; 76,\; 123,\; 199,\; 322,\; \ldots.
$$
There are numerous relations between Fibonacci and Lucas numbers such as 
\begin{equation}
\label{eq:FL1}
L_n^2-5F_n^2=4(-1)^n
\end{equation}
valid for all $n\ge 0$ or
\begin{equation}
\label{eq:FL2}
F_m+F_n=\left\{ \begin{matrix} F_{(m+n)/2}L_{(n-m)/2} & {\text {\rm if}} & m\equiv n\pmod 4;\\
F_{(m-n)/2} L_{(m+n)/2} & {\text{\rm if}} & m\equiv n+2\pmod 4\end{matrix}\right.
\end{equation}
valid for $n\ge m\ge 0$ which are congruent modulo $2$, etc. We shall use such relations freely in our work. 
In \cite{BJJ}, Bravo and Luca treated the Diophantine equation
\begin{equation*}
F_n + F_m = 2^a,
\end{equation*}
and showed that the only solutions $(n,m,a)\in \mathbb{Z}^3$ to this equation with $n>m>0$ are
$$(n,m,a)=(2,1,1),(4,1,2),(4,2,2),(5,4,3),(7,4,4).$$
Using the method due to Bravo and Luca in \cite{BJJ}, Ziegler extended this work without restricting to powers of $2$. That is, in \cite{VZ}, Ziegler looked at the Diophantine equation
\begin{equation}\label{eq:vz}
	F_n+F_m=y^a, \qquad n>m> 1,\;\; a>0,
\end{equation}
with any fixed integer $y$ and investigated the instances when it has at least two solutions in the variable $a$. He showed that 
if $y>1$ is a fixed integer, then there exists at most one solution $a\in \mathbb{Z}$ (and corresponding $m,n$) to the Diophantine equation \eqref{eq:vz} unless $y\in \{ 2,3,4,6,10\}$. In the case that $y\in \{2,3,4,6,10\}$, all solutions $a$ corresponding to $y$ (and some value for the pair $(m,n)$) were given in \cite{VZ} as:
\begin{flushleft}
	\begin{description}
	\item[$y=2$] $(a, n,m)=(2,4,2),(3,5,4),(4,7,4)$;
	\item[$y=3$] $(a,n,m)=(1,3,2),(2,6, 2)$;
	\item[$y=4$] $(a,n,m)=(1,4, 2), (2,7, 4)$;
	\item[$y=6$] $(a, n,m)=(1,5, 2), (2,9, 3)$;
	\item[$y=10$] $(a, n,m)=(1, 6, 3), (2, 16, 7)$.
\end{description}
\end{flushleft}
In this paper, we extend the work in \cite{VZ} and instead study the Diophantine equation 
\begin{equation}\label{eq:main}
	F_n+F_m=p^xq^y, 
\end{equation}
in non-negative integers $x$, $y$ and $n\ge m$, with fixed distinct primes $p$, $q$. We look only at solutions with $xy\ne 0$.  Also, we do not count the number of solutions $(n,m,x,y)$ but rather the number of distinct pairs of nonzero 
integers $(x,y)$ appearing in such an equation. Lastly, since $F_1=F_2=1$, we disregard the case when $n=2$ or $m=2$ when listing actual solutions.

\subsection{Main Result}\label{sec:1.2g}
\begin{theorem}\label{th:main}
Let $p>q$ be fixed distinct primes with $q\le 1000$. Then the Diophantine equation
	\begin{equation*}
		F_n+F_m=p^xq^y, \qquad\text{with}\qquad n\ge m\ge 0, \qquad n,\;m\ne 2\qquad\text{and}\qquad xy\ne 0,
	\end{equation*}
has at least two distinct solutions $(x,y)$ only when $(p,q)\in \mathcal{S}$, where
\begin{align*}
	\mathcal{S}=\{&(3,2),(5,2),(7,2),(7,3),(17,2),(19,2)\}.
\end{align*}	 
In this case, the sums of two Fibonacci numbers that factor as $p^xq^y$ for positive integers $x$ and $y$, with $(p,q)\in \mathcal{S}$, are as follows:
\begin{enumerate}[(i)]
	\item For $(p,q)=(3,2)$, 
	\begin{align*}
		3^x2^y=&F_4 + F_4,\, F_5 + F_1,\,		F_7 + F_5,\, F_8 + F_4,\, F_9 + F_3,\, 
			F_{11} + F_{10} ,\, 
		F_{12} + F_0 ,\\& F_{12} + F_{12},\, F_{13} + F_{10},\, F_{14} + F_{10},\, F_{18} + F_6 .
	\end{align*}
	\item For $(p,q)=(5,2)$, 
	\begin{align*}
		5^x2^y=F_5 + F_5 ,\,	F_6 + F_3 ,\, F_{16} + F_7 ,\,		F_{17} + F_4. 
	\end{align*}
	
	\item For $(p,q)=(7,2)$, 
	\begin{align*}
		7^x2^y&=F_7 + F_1 ,\, F_{10} + F_1 . 
	\end{align*}
	
	\item For $(p,q)=(7,3)$, 
	\begin{align*}
		7^x3^y&=F_7 + F_6 ,\,	F_8 + F_0 ,\, F_{10} + F_6 ,\,		F_{12} + F_4. 
	\end{align*}
	
	\item For $(p,q)=(17,2)$, 
	\begin{align*}
		17^x2^y&=F_8 + F_7 ,\,	F_9 + F_0 ,\, F_{9} + F_9 ,\,		F_{10} + F_7. 
	\end{align*}	

	\item For $(p,q)=(19,2)$, 
	\begin{align*}
		19^x2^y=&F_{10} + F_8,\,	F_{12} + F_6.
	\end{align*}
\end{enumerate}
\end{theorem}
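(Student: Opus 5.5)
Assume the equation has two solutions $(n_1,m_1,x_1,y_1)$ and $(n_2,m_2,x_2,y_2)$ with $(x_1,y_1)\ne(x_2,y_2)$, ordered so that $n_1\le n_2$. Fix $q\le 1000$ and leave $p$ free. The plan is to bound everything in terms of $p$ and $q$ first, then bound $p$ itself, and finish computationally.

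\textbf{Step 1: bound $n_2$ for fixed $(p,q)$.} Write $F_n=(\alpha^n-\beta^n)/\sqrt5$ and apply Matveev's theorem twice, following the method of \cite{BJJ,VZ}.
\begin{itemize}
\item First consider $\Lambda_1=\alpha^{n}5^{-1/2}p^{-x}q^{-y}-1$. We have $|\Lambda_1|\ll \alpha^{-(n-m)}$, and Matveev's theorem gives $n-m\ll (\log p)(\log q)\log n$.
\item Then consider $\Lambda_2=\alpha^{n}(1+\alpha^{m-n})5^{-1/2}p^{-x}q^{-y}-1$. Here $|\Lambda_2|\ll\alpha^{-n}$, so $n\ll (\log p)(\log q)(n-m)\log n$.
\end{itemize}
Combining the two, $n\ll(\log p)^2(\log q)^2(\log n)^2$.

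\textbf{Step 2: bound $p$ using two solutions.} A single solution gives no bound on $p$, so the second solution must be used. Since $p^{x_i}\mid F_{n_i}+F_{m_i}$, we have $x_i\log p\le n_i\log\alpha+1$, so $p$ is bounded polynomially in $n_2$ only if we can eliminate $p$.
\begin{itemize}
\item Take the two equations raised to the powers $x_2$ and $x_1$ respectively and divide. This gives
\[
(F_{n_1}+F_{m_1})^{x_2}\,(F_{n_2}+F_{m_2})^{-x_1}=q^{y_1x_2-y_2x_1}.
\]
\item If $y_1x_2\ne y_2x_1$, then after expanding with Binet this yields a linear form in $\log\alpha$, $\log q$, $\log(1+\alpha^{m_i-n_i})$ and $\log\sqrt5$. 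It does not involve $p$, and Matveev's theorem gives a bound on $n_2$ depending only on $q$.
\item If $y_1x_2=y_2x_1$, then both values are powers of a common integer $r=p^{a}q^{b}$. In that case Ziegler's result \cite{VZ} applies directly: $r\in\{2,3,4,6,10\}$, none of which has the form $p^aq^b$ with $ab\ne0$ except $6$ and $10$. These give the listed $(3,2)$ and $(5,2)$ entries.
\end{itemize}
Because the exponents $x_i$ enter as coefficients, the linear-form estimate is only of size $\ll(\log q)\log\max x_i$. Together with $x_i\le n_i$, this gives an absolute bound $n_2\le N(q)$, roughly of size $10^{30}$, uniformly in $p$. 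Then $p\le F_{n_2}+F_{m_2}$ is bounded, and more usefully $n_2$ is bounded independently of $p$.

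\textbf{Step 3: reduction.} For each $q\le1000$, reduce the bound on $n_2$ with Baker--Davenport / LLL applied to the $p$-free linear forms, first for $n-m$ and then for $n$, with the $n-m$ bound cycling through small values. This should bring $n_2$ down to a few hundred.

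\textbf{Step 4: final search.} Enumerate all $F_n+F_m$ with $n\le$ the reduced bound. For each, factor out the powers of $q$ for every prime $q\le1000$ and test whether the cofactor is a pure prime power $p^x$ with $p>q$. Collect the pairs $(p,q)$ that arise from at least two distinct exponent pairs $(x,y)$; this should produce exactly the set $\mathcal{S}$ and the lists in (i)--(vi).

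\textbf{Main obstacle.} I expect the hardest part to be Step 2, the uniformity in $p$. The divided linear form has coefficients as large as $x_i$, and controlling the degenerate cases (the form vanishing, or $m_i=n_i$) needs care. The reduction in Step 3 is also costly, since it must be carried out for all 168 primes $q$.
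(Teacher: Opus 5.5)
\textbf{There is a gap in Step 2, and it carries over into Steps 3 and 4.}

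After expanding with Binet, the $p$-free form you obtain is
\[
\Lambda=(n_1x_2-n_2x_1)\log\alpha-(y_1x_2-y_2x_1)\log q+x_2\log\frac{1+\alpha^{-d_1}}{\sqrt5}-x_1\log\frac{1+\alpha^{-d_2}}{\sqrt5},\qquad d_i=n_i-m_i .
\]
Each $\log(F_{n_i}+F_{m_i})$ is approximated only up to an error $\ll\alpha^{-n_i}$, so $|\Lambda|\ll n_2\,\alpha^{-\min\{n_1,n_2\}}$. A lower bound for $|\Lambda|$ therefore bounds $n_1$, never $n_2$.

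There is a second problem. The numbers $(1+\alpha^{-d_i})/\sqrt5$ have height $\asymp d_i$, because their conjugates have size about $\alpha^{d_i}$. Matveev's constant therefore contains a factor $\asymp d_1d_2$, which your estimate ``$\ll(\log q)\log\max x_i$'' drops. The only control on $d_i$ you have is Step 1, namely $d_i\ll(\log p)(\log q)\log n$. What you actually get is $n_1\ll(\log q)^3(\log p)^2(\log n)^3$. This is not uniform in $p$, and it is circular, since $\log p<n_1$. In addition, $\Lambda\ne0$ is not equivalent to $y_1x_2\ne y_2x_1$: the exact identity gives a vanishing form by construction. Nonvanishing of $\Lambda$ has to be checked separately, via norms, primitive divisors and a finite check for small $d_i$.

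\textbf{The missing idea is a cascade of eliminations that first makes the $d_i$ small.}
\begin{enumerate}
\item Eliminate $\log p$ between the two first-stage forms $|n_i\log\alpha-x_i\log p-y_i\log q-\log\sqrt5|<6\alpha^{-d_i}$. For $q\ne5$ this gives a nonzero form in $\log\alpha,\log q,\log\sqrt5$ with coefficients $\ll n^2$, hence $\min\{d_1,d_2\}\ll\log q\log n$.
\item Combine the second-stage form of the solution with smaller $d$, which now has height $\ll\log n$, with the first-stage form of the other solution. This gives $\min\{n_1,d_2\}\ll(\log n)^2$.
\item If $d_2$ is the minimum, use $\Lambda$ above to get $\min\{n_1,n_2\}\ll(\log n)^3$.
\end{enumerate}
Only at this point does $\log p<\min\{n_1,n_2\}$ become useful: feeding it into Step 1 gives $n\ll(\log n)^{C}$, an absolute bound.

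The same asymmetry governs the reduction. $p$-free forms can only shrink $n_1$, and hence $\log p$. To bring $n_2$ down you must list the candidate primes $p$ arising from $F_{n_1}+F_{m_1}$ with $n_1$ small, and run a separate reduction for each pair $(p,q)$. So your Step 4 enumeration up to ``a few hundred'' is not justified as stated.

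\textbf{The case $q=5$ is not covered.} Here $\log 5$ and $\log\sqrt5$ are dependent, so the very first elimination vanishes identically whenever the rows $(n_i,-x_i,-(y_i+1/2))$ are proportional. No linear-form argument in your plan handles this rank-one case. The paper treats it with a prime ideal above $p$ dividing a nonzero unit difference in $\mathbb{Q}(\sqrt5,\sqrt{\lambda_{m_1}},\sqrt{\lambda_{m_2}})$, together with a $5$-adic computation bounding $y$.
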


\section{Methods}
\subsection{Preliminaries on Fibonacci and Lucas numbers}
First, we collect some facts on Fibonacci numbers. Let us start with the Binet-formula for the Fibonacci numbers given by
\begin{equation}
\label{eq:F}
F_n=\frac{\alpha^n-\beta^n}{\sqrt{5}}, \qquad\text{where}\qquad
\alpha=\frac{1+\sqrt{5}}{2}\quad\text{and}\quad \beta=\frac{1-\sqrt{5}}2,
\end{equation}
are the roots of the characteristic polynomial $X^2-X-1$ of the Fibonacci sequence. Note that $\beta=-\alpha^{-1}$. The Binet formula for the Lucas numbers is given by 
\begin{equation}
\label{eq:L}
L_n=\alpha^n+\beta^n\qquad {\text{\rm for~all}}\qquad n\ge 0.
\end{equation}
Both the Fibonacci sequence $(F_n)_{n\ge 0}$ and the Lucas sequence $(L_n)_{n\ge 0}$ can be extended to negative integers either by using their recurrence relation or by 
plugging negative numbers $n$ into the Binet formulas \eqref{eq:F} and \eqref{eq:L}. In both cases one checks that $F_{-n}=(-1)^{n-1}F_n$ and $L_{-n}=(-1)^n L_n$ 
hold for all integers $n\ge 0$. Furthermore, all formulas such as \eqref{eq:FL1} and \eqref{eq:FL2} hold when $n,~m$ range through all the integers, positive, zero or negative ones.

Let $n$ be a positive integer. A prime factor $r$ of $F_n$ is called primitive if $r\nmid F_m$ for any positive integer $m<n$. Similarly, a prime factor $r$ of $L_n$ is primitive if 
 $r$ does not divide $L_m$ for any positive integer $m<n$. Since $L_n=F_{2n}/F_n$, it follows that every primitive prime factor of $L_n$ is a prime factor of $F_{2n}$. The following is the celebrated primitive divisor theorem of Carmichael \cite{Car}.  

\begin{theorem}[Carmichael]
\label{thm:Car}
The number $F_n$ has a primitive prime factor for all $n\ge 13$.
\end{theorem}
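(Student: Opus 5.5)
The statement is the classical primitive divisor theorem, and I would prove it by the standard cyclotomic argument. Put $\Phi_n(X,Y)=\prod_{1\le k\le n,\ \gcd(k,n)=1}(X-\zeta_n^kY)$ for the homogenized $n$-th cyclotomic polynomial. From the Binet formula \eqref{eq:F} we have $\alpha^n-\beta^n=\prod_{d\mid n}\Phi_d(\alpha,\beta)$. Since $\alpha-\beta=\sqrt5$, this gives
$$
F_n=\prod_{d\mid n,\ d>1}\Phi_d(\alpha,\beta),
$$
and each $\Phi_d(\alpha,\beta)$ is a rational integer. By M\"obius inversion, $\Phi_n(\alpha,\beta)=\prod_{d\mid n}(\alpha^d-\beta^d)^{\mu(n/d)}$.

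\emph{Arithmetic step.} Let $z(r)$ be the rank of apparition of a prime $r$, so that $r\mid F_m$ if and only if $z(r)\mid m$. A prime $r\mid F_n$ is primitive exactly when $z(r)=n$. I will prove the usual lemma: if $r\mid\Phi_n(\alpha,\beta)$ and $z(r)<n$, then $n=z(r)r^k$ with $k\ge1$, so $r$ is the largest prime factor of $n$. Moreover $r^2\nmid\Phi_n(\alpha,\beta)$, apart from a few small exceptions involving $r=2$ that I will check by hand. The tool is the standard valuation formula: $\nu_r(F_{mr})=\nu_r(F_m)+1$ when $z(r)\mid m$ and $r$ is odd, with the known modification for $r=2$. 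Taking the quotient $\Phi_n=\prod_{d\mid n}(\alpha^d-\beta^d)^{\mu(n/d)}$ at the prime $r$, only the divisors $d$ with $z(r)\mid d$ contribute. The M\"obius sum then collapses to $0$ unless $n/z(r)$ is a power of $r$, and in that case it collapses to $1$. Consequently, if $F_n$ has no primitive prime factor, then $|\Phi_n(\alpha,\beta)|\le P(n)\le n$, up to a bounded factor in the $r=2$ cases (this is exactly what happens at $n=6$ and $n=12$).

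\emph{Analytic step.} Writing $\beta/\alpha=-\alpha^{-2}$, I get
$$
|\Phi_n(\alpha,\beta)|=\alpha^{\varphi(n)}\prod_{d\mid n}\bigl|1-(-\alpha^{-2})^{d}\bigr|^{\mu(n/d)}.
$$
Each factor lies in $[1-\alpha^{-2},\,1+\alpha^{-2}]$ and tends to $1$ very quickly, so the product is bounded below by roughly $\alpha^{\varphi(n)}e^{-c}$ for an absolute constant $c$. A cruder bound, $\alpha^{\varphi(n)-2^{\omega(n)}}$, would also be enough. Combining this with the arithmetic step gives $\alpha^{\varphi(n)}e^{-c}\le n$. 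Using $\varphi(n)\gg n/\log\log n$, or simply an explicit estimate, this forces $n$ to be below an explicit small bound, on the order of a few dozen.

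\emph{Finite check.} For the finitely many remaining $n$, I will verify the statement directly from the factorizations of $F_n$. This shows that the only $n\ge2$ for which $F_n$ has no primitive prime factor are $n=2,6,12$ (for example $F_6=8$ and $F_{12}=144$), and hence that every $n\ge13$ has one.

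I expect the main obstacle to be the arithmetic lemma, specifically controlling the non-primitive primes to the first power, together with the special behaviour of $r=2$, where $\nu_2(F_{2m})$ can jump by more than $1$. The cleanest route is to handle $r=2$ separately, using the explicit values $\Phi_3(\alpha,\beta)=2$, $\Phi_6(\alpha,\beta)=4$ and $\Phi_{12}(\alpha,\beta)=6$, and to note that for larger $n$ the factor $2$ can occur only to a bounded power. The bounded loss this introduces is then absorbed into the finite check.
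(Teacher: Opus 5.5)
Your argument is correct, but note that the paper does not prove this statement. It cites Carmichael's theorem as a black box and only lists $F_1,\dots,F_{12}$ to see that the exceptions below $13$ are $n=1,2,6,12$.

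What you give is a self-contained proof along the classical cyclotomic lines, which is essentially Carmichael's own argument. You factor $F_n=\prod_{d\mid n,\,d>1}\Phi_d(\alpha,\beta)$, use the rank of apparition and the lifting-the-exponent rule to show that the non-primitive part of $\Phi_n(\alpha,\beta)$ is tiny, and compare this with $|\Phi_n(\alpha,\beta)|\gg\alpha^{\varphi(n)}$. The citation buys brevity. Your route buys an explicit, small range for the finite check, and it yields the full exceptional set $\{1,2,6,12\}$ in one stroke.

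Two things should be stated precisely when you write it up.

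\emph{The prime $2$.} For $r=2$ one has $z(2)=3$, so $n=3\cdot 2^k$, and $r$ is \emph{not} the largest prime factor $P(n)$. Your lemma as phrased is therefore false for this whole family, not just for a few small cases. The correct conclusion is that the non-primitive part of $\Phi_n(\alpha,\beta)$ divides $2P(n)$, with the single exception $\Phi_6(\alpha,\beta)=4$. This holds because any odd non-primitive $r$ must equal $P(n)$. That step needs $z(r)\mid r-\left(\frac{5}{r}\right)$ for $r\ne 5$, which you should state, together with $z(5)=5$. It also uses $\nu_2(\Phi_{3\cdot 2^k}(\alpha,\beta))=1$ for $k\ge 2$.

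\emph{The M\"obius collapse.} Besides $\nu_r(F_{mr})=\nu_r(F_m)+1$, you also need $\nu_r(F_{mk})=\nu_r(F_m)$ whenever $z(r)\mid m$ and $r\nmid k$. For $r=2$ the explicit rule is $\nu_2(F_n)=\nu_2(n)+2$ for $6\mid n$ and $\nu_2(F_n)=1$ for $n\equiv 3\pmod 6$. This is what produces $\Phi_6=4$, $\Phi_{12}=6$ and exponent $1$ afterwards.

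With these made explicit, your inequality $c\,\alpha^{\varphi(n)}\le 2n$ and the finite check go through as you describe.
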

Listing $F_n$ for $n\in \{1,2,\ldots,12\}$ one notices that $F_n$ has a primitive prime factor for all $n$ except for $n=1,2,6,12$, for which $F_1=F_2=1$, $F_6=8=F_3^3$ and $F_{12}=144=2^43^2=F_3^4F_4^3$, respectively. Further, $L_n$ has a primitive prime factor for all $n=1,6$ for which $L_1=1,~L_6=18=2\cdot 3^2$, while $L_3=2^2,~L_4=3$. 

Next we discuss greatest common divisors of Fibonacci and Lucas numbers. The next result is from the paper \cite{MD} of McDaniel. 

\begin{lemma}[McDaniel, \cite{MD}]
\label{lem:MD}
Let $m,n$ be positive integers. Then:
\begin{itemize}
\item[(i)] $\gcd(F_m,F_n)=F_{\gcd(m,n)}$;
\item[(ii)] $\gcd(L_m,L_n)=L_{\gcd(m,n)}$ if both $m/\gcd(m,n)$ and $n/\gcd(m,n)$ are odd. Otherwise, the above $\gcd$ is $2$ or $1$ according to whether both $m,n$ are multiples of $3$ or not; 
\item[(iii)] $\gcd(L_m,F_n)=L_{\gcd(m,n)}$ if $n/\gcd(m,n)$ is even and $m/\gcd(m,n)$ is odd. Otherwise, the  above $\gcd$ is $2$ or $1$ according to whether both $m,n$ are multiples of $3$ or not. 
\end{itemize}  
\end{lemma}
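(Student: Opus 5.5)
The statement just made is McDaniel's Lemma~\ref{lem:MD}, and I would prove it from the Binet formulas \eqref{eq:F}, \eqref{eq:L} together with \eqref{eq:FL1}, using the strong divisibility machinery of the Fibonacci sequence.

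For (i), I would first establish the addition law $F_{m+n}=F_{m}F_{n+1}+F_{m-1}F_n$ (by induction on $n$, or directly from \eqref{eq:F}). It gives $F_n\mid F_{kn}$ by induction on $k$. Since consecutive terms are coprime ($\gcd(F_n,F_{n+1})=1$, again by induction or from Cassini's identity), the addition law also gives $\gcd(F_{m+n},F_n)=\gcd(F_mF_{n+1},F_n)=\gcd(F_m,F_n)$. Running the Euclidean algorithm on the indices then yields $\gcd(F_m,F_n)=F_{\gcd(m,n)}$.

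For (ii) and (iii), let $d=\gcd(m,n)$ and write $m=da$, $n=db$ with $\gcd(a,b)=1$. I would use $F_{2k}=F_kL_k$ together with the rank of apparition $z(r)$ of a prime $r$, i.e.\ the least $k\ge 1$ with $r\mid F_k$; by (i), $r\mid F_k$ if and only if $z(r)\mid k$. For an odd prime $r$, from $L_k=F_{2k}/F_k$ and $\gcd(F_k,L_k)\mid 2$ (a consequence of \eqref{eq:FL1}), one gets that $r\mid L_k$ exactly when $z(r)\mid 2k$ but $z(r)\nmid k$, i.e.\ $\nu_2(z(r))=\nu_2(k)+1$ and the odd part of $z(r)$ divides $k$. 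So the odd primes dividing $L_m$ and $L_n$ simultaneously exist only if $\nu_2(m)=\nu_2(n)$, which is the same as $a,b$ both odd. In that case such primes are exactly those dividing $L_d$. For $F_n$ and $L_m$, a common odd prime needs $\nu_2(z(r))\le\nu_2(n)$ and $\nu_2(z(r))=\nu_2(m)+1$, which forces $\nu_2(n)>\nu_2(m)$, i.e.\ $b$ even and $a$ odd, and again the common odd primes are those of $L_d$. The prime $2$ is handled separately: $2\mid F_k\iff 3\mid k$ and $2\mid L_k\iff 3\mid k$. This accounts for the value $2$ or $1$ in the ``otherwise'' cases.

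The main obstacle is to pass from prime sets to exact gcds. This requires comparing $r$-adic valuations, for which I would use the lifting-the-exponent law $\nu_r(F_{kt})=\nu_r(F_k)+\nu_r(t)$ for odd $r$ with $z(r)\mid k$. It gives $\nu_r(L_k)=\nu_r(F_{2k})-\nu_r(F_k)$, which depends on $k$ only through $\nu_r(k)$ once divisibility holds, so the minimum over $m,n$ is attained at $d$. In the ``otherwise'' cases one must also check that $4$ never divides both numbers, since $\nu_2(L_k)\le 2$, with equality only when $k\equiv 3\pmod 6$. Since the lemma is quoted from \cite{MD}, in the paper itself one would simply cite it.
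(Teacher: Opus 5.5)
The paper gives no proof of this lemma. It is quoted from McDaniel and used as a black box, so your self-contained argument is necessarily a different route, and it is a sound one. Part (i) is the standard strong-divisibility argument. Your rank-of-apparition analysis of the common odd primes is also right: an odd $r$ divides $L_k$ iff $\nu_2(z(r))=\nu_2(k)+1$ and the odd part of $z(r)$ divides $k$. This forces $\nu_2(m)=\nu_2(n)$, resp.\ $\nu_2(n)>\nu_2(m)$, and then identifies these primes with those of $L_d$. The valuation step goes through as well. For such $r$ the lifting formula gives $\nu_r(L_m)=\nu_r(F_{2m})=\nu_r(L_d)+\nu_r(m/d)$, and likewise for $L_n$ or $F_n$, while $\min\{\nu_r(m/d),\nu_r(n/d)\}=0$.

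What your route costs is importing $\nu_r(F_{kt})=\nu_r(F_k)+\nu_r(t)$, which is a deeper fact than the lemma itself. You can avoid it using only (i):
\begin{itemize}
\item $\gcd(L_m,L_n)$ divides $F_{\gcd(2m,2n)}=F_{2d}=F_dL_d$.
\item $\gcd(L_m,F_n)$ divides $F_{\gcd(2m,n)}$, which is $F_{2d}$ if $n/d$ is even and $F_d$ otherwise.
\item An odd prime dividing $L_m$ cannot divide $F_d\mid F_m$, since $\gcd(L_m,F_m)\mid 2$.
\item $L_d$ divides $L_{kd}$ for odd $k$, and also divides $F_{2d}$.
\end{itemize}
In the main cases this shows the gcd is $L_d$ times a power of $2$. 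In the ``otherwise'' cases, $F_{2d}$ or $F_d$ divides $F_k$ for the relevant $k\in\{m,n\}$, so the gcd divides $\gcd(L_k,F_k)\mid 2$. No valuation theory is needed.

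Whichever route you take, your sketch leaves the power of $2$ in the main (non-``otherwise'') cases implicit, because your lifting argument covers only odd $r$. It follows from these facts:
\begin{itemize}
\item $\nu_2(L_k)=0,2,1$ according as $3\nmid k$, $k\equiv 3\pmod 6$, or $k\equiv 0\pmod 6$;
\item $\nu_2(F_k)=1$ for $k\equiv 3\pmod 6$;
\item $\nu_2(F_k)\ge 3$ for $6\mid k$, since $F_6=8$.
\end{itemize}
If $3\nmid d$, one of the two numbers is odd, and so is $L_d$. If $3\mid d$, then in (ii) the integers $m,n,d$ have the same parity and so lie in the same class modulo $6$. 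In (iii), $n$ is even, so $\nu_2(F_n)\ge 3>\nu_2(L_m)=\nu_2(L_d)$.

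The fact $\nu_2(F_n)=1$ for $n\equiv 3\pmod 6$ is also what you need in the ``otherwise'' case of (iii) when $m\equiv 3\pmod 6$. There $4\mid L_m$ and $n$ is odd. Your remark that $\nu_2(L_k)\le 2$ does not by itself rule out $4\mid F_n$.
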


From the above lemma it follows, in particular, that if $a\mid F_b$ where $a$ and $b$ are both odd, then $a$ is coprime to $L_n$ for any $n\ge 1$. In particular, $L_n$ is never a multiple of $5(=F_5)$ or $13(=F_{7})$ or $17$ (which divides $F_{9}=34$), etc.  This fact will be useful later. 

We will also need the following result about perfect powers in the Fibonacci and Lucas numbers \cite{Buge}.

\begin{theorem}[Bugeaud, Mignotte, Siksek]
\label{thm:BMS}
If $F_n=y^a$ for some integers $y>1$ and $a>1$, then $n\in \{6,12\}$ for which $F_6=2^3,~F_{12}=12^2$. If $L_n=y^a$ for some integers $y>1$ and $a>1$, then $n=3$ for which $L_3=2^2$.
\end{theorem}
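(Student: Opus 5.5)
The plan is to follow the strategy of Bugeaud, Mignotte and Siksek, which combines the modular approach to Diophantine equations with Baker's theory of linear forms in logarithms. In the paper itself this result is simply quoted from \cite{Buge}, so what follows is a plan for reproducing it, not a short derivation from earlier results.

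First, I reduce to prime exponent. If $F_n=y^a$ with $a>1$, then $F_n$ is also a $p$-th power for some prime $p\mid a$, so it suffices to treat $a=p$ prime; the case $a=4$ follows from $a=2$. Via \eqref{eq:FL1}, $F_n=y^p$ gives $L_n^2=5y^{2p}\pm 4$, and $L_n=y^p$ gives $y^{2p}-5F_n^2=\pm 4$. Both are ternary equations of signature $(p,p,2)$ or $(2p,2,\cdot)$, of generalized Lebesgue--Nagell type. Using $F_{2k}=F_kL_k$ and $\gcd(F_k,L_k)\in\{1,2\}$ (Lemma~\ref{lem:MD}), an even index splits into near-powers of smaller index. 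This allows a descent that reduces to odd $n$ or to small cases, which are checked by hand (giving $F_6=2^3$, $F_{12}=12^2$, $L_3=2^2$). The small exponents $p=2,3,5$ are handled classically: the squares in Fibonacci and Lucas numbers are known (Cohn), and $p=3,5$ reduce to Thue equations or elliptic curves solvable by computer.

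For $p\ge 7$, I attach a Frey curve $E$ to a putative solution (for instance a Frey--Hellegouarch curve built from $L_n^2-5y^{2p}=\pm4$). By modularity and Ribet's level lowering, $\overline{\rho}_{E,p}$ then arises from a newform of small level, a divisor of something like $2^a\cdot 5$. The traces of Frobenius $a_\ell(E)$ can be controlled through $n\bmod$ (small moduli) using periodicity of $F_n\bmod\ell$. Comparing them with the coefficients of the finitely many newforms yields congruences that either eliminate $p$ or force $n$ into explicit residue classes modulo a large integer. Sieving across many auxiliary primes $\ell$ then shows that any solution must have $n$ enormous; I expect a bound of order $n>10^{100}$ or larger.

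On the other side, I use $\alpha^n/\sqrt5-y^p=\beta^n/\sqrt5$ to get a linear form $\Lambda=n\log\alpha-p\log y-\log\sqrt5$ with $|\Lambda|\ll \alpha^{-2n}$. Since $\log y\approx n\log\alpha/p$, Matveev's bound (or the sharper Laurent--Mignotte--Nesterenko bound for two logarithms) gives $p$ bounded, roughly $p<2\cdot10^{8}$. For each such $p$, the modular method above gives a lower bound on $n$ that grows with $p$, while the linear form gives $n\ll p^{c}$ times logarithmic factors. The contradiction comes from comparing the two.

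The main obstacle is closing the gap between these two bounds: the Baker bound on $p$ is huge, and the modular sieve must work uniformly for every prime $p$ up to that bound. I would first try to obtain, for each $p$, an auxiliary prime $\ell=2kp+1$ such that the $p$-th powers modulo $\ell$ are few. This forces $F_n$ modulo $\ell$ into a tiny set, which combines with the newform congruences to pin $n$ modulo $\mathrm{lcm}$ of periods, giving the huge lower bound on $n$ that contradicts the Baker upper bound. This sieve needs substantial computation but proceeds mechanically once set up.
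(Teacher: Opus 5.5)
The paper does not prove this statement: Theorem~\ref{thm:BMS} is quoted from \cite{Buge}, and that citation is the whole justification, as you note. Read as a self-contained plan, your outline has a genuine gap at the step that is supposed to close it.

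For a fixed $p$, the form $\Lambda=n\log\alpha-p\log y-\log\sqrt5$ gives no upper bound of the type $n\ll p^{c}$. Every Baker-type lower bound has the height $h(y)=\log y\approx n\log\alpha/p$ as a factor. Since $-\log|\Lambda|\approx 2n\log\alpha$, you get $2n\log\alpha\lesssim C\,(n/p)\log B$, and $n$ cancels: such estimates bound $p$, never $n$. With Theorem~\ref{thm:Matg} as quoted, $\log B\approx\log n$, so you only get $p\ll\log n$. An absolute bound on $p$ needs a height-weighted $B$ or a sharper three-logarithm estimate.

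On the other side, no congruence sieve can give a contradiction by itself. The trivial solutions $F_1=F_2=1=1^p$ have Frey curves at the lowered level, so every congruence you extract is also satisfied by the trivial indices. All the sieve yields is a lower bound for nontrivial $n$. So your ``enormous lower bound on $n$'' has nothing to contradict, and the final comparison in your plan is vacuous.

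Two ingredients are missing: a way for the modular information to change the logarithmic estimate itself, and a separate method for the exponents that survive. For example, suppose auxiliary primes $\ell=2jp+1$ yield $n\equiv\pm1\pmod p$. Writing $n=kp\pm1$ gives $\Lambda=p\log(\alpha^k/y)-\log(\sqrt5\,\alpha^{\mp1})$. This is a form in two logarithms, one of bounded height, and a two-logarithm estimate then gives a much smaller absolute bound for $p$. Without such a congruence, splitting $n=kp+r$ is useless, since $h(\alpha^r/\sqrt5)$ can be of size $p$ and cancels the gain.

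The exponents below the resulting bound still need a genuinely fixed-exponent argument. One option is the Thue equations of degree $2p$ obtained, for odd $n$, by factoring $L_n^2+4=5y^{2p}$ in $\mathbb{Z}[i]$. No such step appears in your plan.

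A smaller point: the even-index descent is not self-contained. If $6\mid t$, then $F_{2t}=F_tL_t=y^p$ forces $L_t=2v^p$; this is exactly how $F_{12}=F_6L_6=8\cdot 18$ arises. Also, $L_{2^s}$ admits no factorization of this kind. So further near-power equations, which are not instances of the statement, have to be solved before you are reduced to odd indices.
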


In the interest of this paper, we need to discuss the multiplicative properties of numbers of the form 
$$
\delta_{\ell}:=\frac{\alpha^{-\ell}+1}{\sqrt{5}}
$$
for $\ell\ge 2$.  Throughout the paper ${\mathbb K}:={\mathbb Q}(\alpha)$. 
\begin{lemma}
	\label{lem:1}
	We have 
	$$
	N_{{\mathbb K}/{\mathbb Q}}\left(\frac{1+\alpha^{-\ell}}{{\sqrt{5}}}\right)=\left\{\begin{matrix} -L_{\ell}/5 & {\text{\rm if}} & \ell\equiv 1\pmod 2;\\
		F_{\ell/2}^2 & {\text{\rm if}} & \ell\equiv 2\pmod 4;\\
		L_{\ell/2}^2/5 & {\text{\rm if}} & \ell\equiv 0\pmod 4.
	\end{matrix}
	\right.
	$$ 
\end{lemma}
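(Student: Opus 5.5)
The plan is a direct computation with the Binet formulas. The nontrivial automorphism $\sigma$ of ${\mathbb K}={\mathbb Q}(\alpha)$ sends $\alpha\mapsto\beta$ and $\sqrt5\mapsto-\sqrt5$. Hence
$$
N_{{\mathbb K}/{\mathbb Q}}\left(\frac{1+\alpha^{-\ell}}{\sqrt5}\right)=\frac{(1+\alpha^{-\ell})(1+\beta^{-\ell})}{\sqrt5\cdot(-\sqrt5)}=-\frac{(1+\alpha^{-\ell})(1+\beta^{-\ell})}{5}.
$$
Expanding the numerator with $\alpha\beta=-1$ and $L_{-\ell}=(-1)^\ell L_\ell$ (the extension to negative indices recalled above) gives
$$
(1+\alpha^{-\ell})(1+\beta^{-\ell})=1+L_{-\ell}+(-1)^\ell=1+(-1)^\ell\bigl(L_\ell+1\bigr).
$$

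\emph{Odd $\ell$.} The numerator is $-L_\ell$.

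\emph{Even $\ell$.} The numerator is $L_\ell+2$. Write $\ell=2k$. Using $L_{2k}=L_k^2-2(-1)^k$, we get $L_\ell+2=L_k^2+2-2(-1)^k$.
\begin{itemize}
\item If $k$ is even ($\ell\equiv0\pmod4$), this equals $L_k^2$.
\item If $k$ is odd ($\ell\equiv2\pmod4$), it equals $L_k^2+4$, which is $5F_k^2$ by \eqref{eq:FL1}.
\end{itemize}
Dividing by $-5$ then gives three closed forms, which I would compare with the three lines of the lemma.

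The main obstacle is an overall sign, not the algebra, and I would flag it explicitly. With $\sigma(\sqrt5)=-\sqrt5$, the computation yields $L_\ell/5$, $-F_{\ell/2}^2$ and $-L_{\ell/2}^2/5$. These are exactly the negatives of the values stated in the lemma.

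Two small cases confirm that the discrepancy is real rather than a slip in the expansion.
\begin{itemize}
\item For $\ell=1$: $1+\alpha^{-1}=\alpha$, so the number is $\alpha/\sqrt5$, with norm $(-1)/(-5)=1/5=L_1/5$.
\item For $\ell=2$: $1+\alpha^{-2}=\sqrt5\,\alpha^{-1}$, so the number is $\alpha^{-1}$, with norm $-1=-F_1^2$.
\end{itemize}

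So I cannot derive the lemma exactly as worded. What the argument does prove is the statement for the absolute value, $\bigl|N_{{\mathbb K}/{\mathbb Q}}(\delta_\ell)\bigr|$: it equals $L_\ell/5$, $F_{\ell/2}^2$ or $L_{\ell/2}^2/5$ according to the three residue classes of $\ell$, with the signs as computed above. For the later applications this is presumably all that is needed, namely the prime factorization of the ideal generated by $\delta_\ell$ and which rational primes can divide its norm. In the write-up I would therefore state and prove the corrected signed version, and remark that the lemma's content is unaffected in absolute value.
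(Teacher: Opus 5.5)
Your computation is the same as the paper's: expand $(1+\alpha^{-\ell})(1+\beta^{-\ell})=L_{-\ell}+1+(-1)^{\ell}$ and split according to $\ell$ modulo $4$, where you also supply the identities $L_{2k}=L_k^2-2(-1)^k$ and \eqref{eq:FL1} that the paper leaves implicit. Your sign objection is correct: the paper divides by $5$ instead of by $N_{\mathbb{K}/\mathbb{Q}}(\sqrt5)=-5$, so every stated value is off by a factor $-1$ (for example, $\ell=2$ gives the unit $\alpha^{-1}$ of norm $-1$), but this is harmless because every later use of the lemma (the multiplicative-dependence lemma and the nonvanishing arguments for $\tau_1$, $\tau_4$, $\Lambda_4$, $\Lambda_7$) depends only on the norm up to sign, i.e.\ on its prime factors.
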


\begin{proof}
	A simple calculation shows that
\begin{align*}
	N_{{\mathbb K}/{\mathbb Q}}\left(\frac{1+\alpha^{-\ell}}{\sqrt{5}}\right)=\frac{(1+\alpha^{-\ell})(1+\beta^{-\ell})}{5}=\frac{L_{-\ell}+1+(-1)^{\ell}}{5}.
\end{align*}
	If
	$\ell$ is odd, the above is $L_{-\ell}/5=-L_{\ell}/5$. If $\ell$ is even, then the above has the value $(L_{\ell}+2)/5$. This is $F_{\ell/2}^2$ or $L_{\ell/2}^2/5$ according to whether $\ell\equiv 2\pmod 4$
	or $\ell\equiv 0\pmod 4$.
\end{proof}

\begin{lemma}
	Assume that $k\ge 2$ and that $\ell_1,\ell_2,\ldots,\ell_k$ are distinct integers $>1$ such that 
	$$
	\frac{1+\alpha^{-\ell_i}}{\sqrt{5}}\qquad i=1,\ldots,k
	$$
	are multiplicatively dependent up to units ($\pm $ powers of $ \alpha$). Then 
	$$
	\{\ell_1,\ldots,\ell_k\}\subseteq \{2,6,10,12\}.
	$$
\end{lemma}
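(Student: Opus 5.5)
Write $\delta_\ell=(1+\alpha^{-\ell})/\sqrt5$. The plan is to pass to ideals in $\mathbb{K}$ and compare prime ideal factorizations. Since $\mathbb{K}$ has class number one and unit group $\pm\alpha^{\mathbb Z}$, multiplicative dependence up to units means
\[
\prod_{i=1}^k \delta_{\ell_i}^{c_i}=\pm\alpha^{t}
\]
for some integers $c_1,\ldots,c_k$, not all zero. We may assume every $c_i\neq 0$, discarding indices with zero exponent; what remains still has at least two indices, since a single $\delta_\ell$ is never a unit by Lemma~\ref{lem:1} for $\ell>1$. Equivalently, the principal ideals $(\delta_{\ell_i})$ are multiplicatively dependent as fractional ideals. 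The key step is to attach to each $\ell$ a prime ideal dividing $(\delta_\ell)$ that divides no $(\delta_{\ell'})$ with $\ell'\ne\ell$. If such a prime exists for some $\ell_j$, comparing $\mathfrak p$-adic valuations in the relation forces $c_j=0$, a contradiction. So every $\ell_i$ must lack such a "private" prime, and the proof reduces to showing that this happens only for $\ell\in\{2,6,10,12\}$.

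To find private primes we write $\delta_\ell$ via Binet. We have $1+\alpha^{-\ell}=\alpha^{-\ell/2}(\alpha^{\ell/2}+\alpha^{-\ell/2})$ when $\ell$ is even. Since $\alpha^{-1}=-\beta$, this gives $\alpha^{\ell/2}+(-1)^{\ell/2}\beta^{\ell/2}$, which is $L_{\ell/2}$ or $\sqrt5F_{\ell/2}$ according to the parity of $\ell/2$. Thus $(\delta_\ell)$ is the ideal $(F_{\ell/2})$ for $\ell\equiv2\pmod 4$, and $(L_{\ell/2}/\sqrt5)$ for $\ell\equiv0\pmod4$. This matches Lemma~\ref{lem:1}, and these ideals are generated by rational integers, up to the ramified prime $(\sqrt5)$. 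For odd $\ell$ the norm is $-L_\ell/5$. Here the rational primes $r\mid L_\ell$ split in $\mathbb{K}$, because $r\ne 5$ and $5$ is a square modulo $r$ by \eqref{eq:FL1}. Exactly one prime above $r$ divides $(1+\alpha^{-\ell})$, as $\gcd$ with its conjugate divides $\alpha^{-\ell}-\beta^{-\ell}$, which is related to $F_\ell$, and $\gcd(L_\ell,F_\ell)\mid2$.

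Now we take a primitive prime divisor $r$ of the relevant integer: of $F_{\ell/2}$, of $L_{\ell/2}$, or of $L_\ell$. This exists by Carmichael's Theorem~\ref{thm:Car} and the subsequent remarks, outside a short explicit list of small $\ell$. We then check that $r$, or the relevant prime above it, cannot divide $\delta_{\ell'}$ for $\ell'\ne\ell$. For this we use Lemma~\ref{lem:MD} and the rank of apparition: a primitive prime of $L_m$ has rank $2m$. Hence it divides $F_{n}$ only if $2m\mid n$, and it divides $L_n$ only if $n/m$ is odd, so a prime dividing both $\delta_\ell$ and $\delta_{\ell'}$ pins down the orders. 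The three families then have distinct ranks: $\ell/2$ for $\ell\equiv 2\pmod 4$, $\ell$ for $\ell\equiv0\pmod 4$, and $2\ell$ for $\ell$ odd. For odd $\ell$ we also need that the prime above $r$ is the correct one of the two conjugates, which requires a small argument distinguishing $\mathfrak p$ from $\bar{\mathfrak p}$.

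The main obstacle is keeping the three families apart while bookkeeping the primes $2$, $3$ and $\sqrt5$, together with the exceptional small indices. These are $F_1,F_2,F_6,F_{12}$ for $\ell=2,4,12,24$ with $\ell\equiv2\pmod 4$, and $L_1,L_6$ (and also $L_3,L_4$ with no new primes) for the other families. I will handle these by hand, computing the factorizations $(\delta_\ell)$ for $\ell\le 24$ or so. This should leave exactly $\ell=2$ (a unit ideal, since $F_1=1$), $\ell=6$ ($F_3=2$), $\ell=10$ ($F_5=5$, which is a power of $\sqrt5$), and $\ell=12$ ($L_6/\sqrt5=18/\sqrt5$, involving only $2$, $3$ and $\sqrt5$). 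Any other small $\ell$ has a prime not shared with the others. Finally I will check that $\ell=24$ ($F_{12}=144$) is excluded: although $F_{12}$ has no primitive prime, the exact power of $3$ or $2$, compared against the other candidates, should still give an independent valuation. If it does not, the set in the statement would have to include $24$, so this computation must be carried out carefully.
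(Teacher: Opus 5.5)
The gap is in the step you defer to the end: the hand computation does not leave exactly $\{2,6,10,12\}$. Your own formulas give $1+\alpha^{-3}=\sqrt5-1=2\alpha^{-1}$ and $\delta_4=3\alpha^{-2}/\sqrt5$. Hence $(\delta_3)=(2)(\sqrt5)^{-1}$ and $(\delta_4)=(3)(\sqrt5)^{-1}$, where $(2)$ and $(3)$ are inert primes of $\mathbb K$. These primes also occur in $(\delta_6)=(2)$, $(\delta_{10})=(\sqrt5)^2$ and $(\delta_{12})=(2)(3)^2(\sqrt5)^{-1}$. So neither $\ell=3$ nor $\ell=4$ has a private prime, and there are genuine relations with all exponents nonzero:
\[
\delta_3^{2}\,\delta_6^{-2}\,\delta_{10}=\alpha^{-1},\qquad \delta_{12}\,\delta_3^{-1}\,\delta_4^{-2}\,\delta_{10}^{-1}=\alpha^{4}.
\]
The statement as written is therefore false, and no argument can close this step.

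Carried out correctly, your method proves $\{\ell_1,\ldots,\ell_k\}\subseteq\{2,3,4,6,10,12\}$, and the relations above show this is sharp. The argument runs as follows. Take $\ell$ maximal. A primitive prime of $F_{\ell/2}$, $F_\ell$ or $F_{2\ell}$ (according to $\ell \bmod 4$) is then automatically private unless it is $2$ or $5$. This leaves only $\ell\in\{2,3,6,10,12\}$ as possible maxima. Then factor $\delta_\ell$ for $\ell\le 12$: the private primes $11,29,7,19,199$ eliminate $\ell=5,7,8,9,11$.

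The paper's proof has the same hole. Its list of norms gives $-2^2/5$ and $3^2/5$ for $\ell=3,4$, supported on the same rational primes $2,3,5$ as $\ell=6,10,12$, so its final deduction does not follow either. At the inert and ramified primes $2,3,\sqrt5$, your ideal-level bookkeeping carries no more information than the norms.

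Several supporting claims are also wrong:
\begin{itemize}
\item $\delta_2=\alpha^{-1}$ is a unit (its norm is $F_1^2=1$ by Lemma~\ref{lem:1}), so a single $\delta_\ell$ can be a unit. The lemma only makes sense for relations with all exponents nonzero; otherwise $\{2,7\}$ is already a counterexample. Discarding zero-exponent indices proves nothing about the discarded $\ell_i$, so you must state that hypothesis rather than call it without loss of generality.
\item Your rank-of-apparition rules break down at $r=2$. Here $z(2)=3$ is odd, yet $2\mid L_n$ exactly when $3\mid n$, which is how $\delta_3$ and $\delta_6$ come to share $(2)$. Moreover, $2$ is inert in $\mathbb K$, not split as you assert for all primes dividing $L_\ell$.
\item Your list of exceptional indices is misassigned. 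Since $\ell=4,12,24$ are $\equiv 0\pmod 4$, they involve $L_2=3$, $L_6=2\cdot 3^2$ and $L_{12}=2\cdot 7\cdot 23$, not $F_2,F_6,F_{12}$. In particular $\ell=24$ is harmless, because $23$ is a primitive prime of $F_{24}$.
\end{itemize}
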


\begin{proof}
	We write a relation of the form 
	$$
	\alpha^a \prod_{i=1}^k \left(\frac{1+\alpha^{-\ell_i}}{{\sqrt{5}}}\right)^{b_i}=\pm 1,
	$$
	for some integers $a,~b_1,\ldots,b_k$, where not all $b_1,\ldots,b_k$ are zero. We take norms and apply Lemma \ref{lem:1}. We get 
	$$
	\prod_{i=1}^k \delta_{\ell_i}^{b_i}=\pm 1,
	$$
	where $\delta_{\ell}=L_{\ell}/5$ for $\ell$ odd, $F_{\ell/2}^2$ for $\ell\equiv 2\pmod 4$ and $L_{\ell/2}^2/5$ for $\ell\equiv 0\pmod 4$. Recall that a primitive prime factor $r$ of $F_n$ is a prime number which does not divide $F_m$ for any positive integer $m<n$. Furthermore, $F_n$ has a primitive prime factor $r$ for all $n>12$ by Theorem \ref{thm:Car}. Since $L_n=F_{2n}/F_n$, it follows that 
	if $n$ is even then the primitive prime factor  of $F_{2n}$ when it exists is a factor of $L_n$. In particular, $L_{\ell}$ contains the primitive prime factors of $F_{2\ell}$ (for $\ell$ odd so $2\ell\equiv 2\pmod 4$) when they exist, $L_{\ell/2}$ contains the primitive prime factors of $F_{\ell}$ (for $\ell\equiv 0\pmod 4$) when they exist, and $F_{\ell/2}$ contains the primitive prime factors of $F_{\ell/2}$ (for $\ell\equiv 2\pmod 4$, so $\ell/2$ is odd), when they exist. In particular, if $\ell_j:=\max\{\ell_i: 1\le i\le k\}>12$, then $\delta_{\ell_j}$ contains a primitive prime factor which does not divide $\delta_{\ell_s}$ for any $s\ne j$. 
	Listing $\delta_{\ell}$ for $\ell=2,\ldots,12$, we get the numbers
	$$
	1,~-\frac{2^2}{5},~\frac{3^2}{5},~-\frac{11}{5},~2^2,~-\frac{29}{5},~\frac{7^2}{5},~-\frac{2^2\cdot 19}{5},~5^2,~-\frac{199}{5},~\frac{2^23^4}{5},\ldots,
	$$
	from where we deduce the desired conclusion. 
\end{proof}

\subsection{Generalities on the equation $F_n+F_m=p^xq^y$}

Assume next that $n\ge1$. Then the Binet formula \eqref{eq:F} immediately yields the inequalities 
\begin{equation}\label{eq:Fib-ieq}
	0.38 \alpha^n <\alpha^n \left(\frac{1-\alpha^{-4}}{\sqrt 5}\right)\leq F_n=\alpha^n\frac{1-(-1)^n \alpha^{-2n}}{\sqrt 5}\leq \alpha^n \left(\frac{1+\alpha^{-4}}{\sqrt 5}\right)< 0.52 \alpha^n.
\end{equation}
Next, assume without loss of generality that $n\ge m\ge 1$, and that $F_n+F_m=p^xq^y$ for some distinct primes $p,q$. Then we have by \eqref{eq:Fib-ieq} that
\begin{equation}\label{eq:2.2g}
	 p^xq^y=F_n+F_m<0.52\alpha^n+
	0.52\alpha^{m}\le 0.52\alpha^n+
	0.52\alpha^{n}<1.1 \alpha^n.
\end{equation}
Taking logarithms on both sides of \eqref{eq:2.2g}, we get
\begin{align*}
	x\log p+y\log q<n\left(\dfrac{\log 1.1}{n}+\log\alpha\right)<0.6n.
\end{align*}
Since $p\ge 2,q\ge 2$, we have
\begin{align}\label{eq:2.4g}
	x<\dfrac{0.6n}{\log p}<n\qquad\text{and}\qquad y<\dfrac{0.6n}{\log q}<n.
\end{align}

Lastly here, we recall one additional simple fact from calculus. If $x\in \mathbb{R}$ satisfies $|x|<1/2$, then 
\begin{align}\label{eq2.5g}
	|\log(1+x)|&<|x-x^2/2+-\dots|
	<|x|+\frac{|x|^2+|x|^3+\dots}2
	<|x|\left(1+\frac{|x|}{2(1-|x|)}\right)<\frac 32 |x|.
\end{align}
In a similar way, we obtain the lower bound $|\log(1+x)|>\frac 12 |x|$ provided that $|x|<1/2$. We use these inequalities frequently throughout the paper.

\subsection{Linear forms in logarithms}
We use several times Baker-type lower bounds for nonzero linear forms in two or more logarithms of algebraic numbers. There are many such bounds mentioned in the literature like that of Baker and W{\"u}stholz from \cite{BW} or Matveev from \cite{MAT}. Before we can formulate such inequalities we need the notion of height of an algebraic number recalled below.  

\begin{definition}\label{def2.1t}
	Let $ \lambda $ be an algebraic number of degree $ d $ with minimal primitive polynomial over the integers $$ a_{0}x^{d}+a_{1}x^{d-1}+\cdots+a_{d}=a_{0}\prod_{i=1}^{d}(x-\lambda^{(i)}), $$ where the leading coefficient $ a_{0} $ is positive. The logarithmic height of $ \lambda$ is given by $$ h(\lambda):= \dfrac{1}{d}\Big(\log a_{0}+\sum_{i=1}^{d}\log \max\{|\lambda^{(i)}|,1\} \Big). $$
\end{definition}
 In particular, if $ \lambda$ is a rational number represented as $\lambda:=p/q$ with coprime integers $p$ and $ q\ge 1$, then $ h(\lambda ) = \log \max\{|p|, q\} $. 
The following properties of the logarithmic height function $ h(\cdot) $ will be used in the rest of the paper without further reference:
\begin{equation}\nonumber
	\begin{aligned}
		h(\lambda_{1}\pm\lambda_{2}) &\leq h(\lambda_{1})+h(\lambda_{2})+\log 2;\\
		h(\lambda_{1}\lambda_{2}^{\pm 1} ) &\leq h(\lambda_{1})+h(\lambda_{2});\\
		h(\lambda^{s}) &= |s|h(\lambda)  \quad {\text{\rm valid for}}\quad s\in \mathbb{Z}.
	\end{aligned}
\end{equation}

A linear form in logarithms is an expression
\begin{equation}
	\label{eq:Lambda}
	\Lambda:=b_1\log \lambda_1+\cdots+b_t\log \lambda_t,
\end{equation}
where for us $\lambda_1,\ldots,\lambda_t$ are positive real  algebraic numbers and $b_1,\ldots,b_t$ are nonzero integers. We assume, $\Lambda\ne 0$. We need lower bounds 
for $|\Lambda|$. We write ${\mathbb L}:={\mathbb Q}(\lambda_1,\ldots,\lambda_t)$ and $D$ for the degree of ${\mathbb L}$.
We start with the general form due to Matveev presented as Theorem 9.4 in \cite{Buge}. 

\begin{theorem}[Matveev, Theorem 9.4 in \cite{Buge}]
	\label{thm:Matg} 
	Put $\Gamma:=\lambda_1^{b_1}\cdots \lambda_t^{b_t}-1=e^{\Lambda}-1$. Assume $\Gamma\ne 0$. Then 
	$$
	\log |\Gamma|>-1.4\cdot 30^{t+3}\cdot t^{4.5} \cdot D^2 (1+\log D)(1+\log B)A_1\cdots A_t,
	$$
	where $B\ge \max\{|b_1|,\ldots,|b_t|\}$ and $A_i\ge \max\{Dh(\lambda_i),|\log \lambda_i|,0.16\}$ for $i=1,\ldots,t$. Furthermore, in this case also
	$$
	\log |\Lambda|>-1.5\cdot 30^{t+3}\cdot t^{4.5} \cdot D^2 (1+\log D)(1+\log B)A_1\cdots A_t,
	$$
	where $\Lambda$ is shown in \eqref{eq:Lambda}.
\end{theorem}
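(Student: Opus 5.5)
The statement immediately above is Matveev's theorem itself, in the form given as Theorem 9.4 of \cite{Buge}. It is a deep external result, so within this paper it is invoked rather than proved. Still, I can say how the second inequality (for $\Lambda$) follows from the first (for $\Gamma$), since that step is elementary. I would take the first inequality as given, since it is Matveev's bound specialised to real positive $\lambda_i$ with explicit constants. I would then deduce the bound for $|\Lambda|$ by comparing $\Gamma=e^{\Lambda}-1$ with $\Lambda$.

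I split into two cases according to the size of $|\Lambda|$. Note first that $\Gamma\ne0$ exactly when $\Lambda\ne 0$.

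\emph{Case $|\Lambda|\ge 1/2$.} Here $\log|\Lambda|\ge -\log 2$. The right-hand side $-1.5\cdot 30^{t+3}t^{4.5}D^2(1+\log D)(1+\log B)A_1\cdots A_t$ is far smaller than $-\log 2$. Indeed, even with $A_i\ge 0.16$ and $t\ge1$, the product $30^{t+3}\cdot 0.16^t$ is at least $30^4\cdot 0.16\cdot (4.8)^{t-1}$, which is already enormous. So the inequality holds trivially.

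\emph{Case $|\Lambda|<1/2$.} By the mean value theorem, $|\Gamma|=|e^{\Lambda}-1|\le |\Lambda|e^{|\Lambda|}<e^{1/2}|\Lambda|$. Hence
$$\log|\Lambda|>\log|\Gamma|-\tfrac12>-1.4\cdot C-\tfrac12,$$
where $C:=30^{t+3}t^{4.5}D^2(1+\log D)(1+\log B)A_1\cdots A_t$. It then remains to check that $0.1\,C\ge \tfrac12$. This holds because $C\ge 30^4\cdot 0.16>10^5$, using $D\ge1$, $B\ge1$ and $A_i\ge 0.16$. Therefore $\log|\Lambda|>-1.5\,C$, as claimed.

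The only real content is the first inequality, which is Matveev's theorem. It rests on his refinement of Baker's method: auxiliary functions, a zero estimate, and Kummer descent to obtain the $30^{t}$ dependence. I would not attempt to reprove this here and would simply cite \cite{MAT} via \cite{Buge}. The main obstacle, if one insisted on a self-contained proof, would be exactly that transcendence machinery. The derivation of the $\Lambda$-version above is routine bookkeeping with the slack between the constants $1.4$ and $1.5$.
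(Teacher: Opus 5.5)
Your proposal is correct and matches the paper's treatment. The paper also simply cites the $\Gamma$-bound (Theorem 9.4 of Bugeaud--Mignotte--Siksek) and obtains the $\Lambda$-bound by comparing $\Gamma=e^{\Lambda}-1$ with $\Lambda$ when $|\Lambda|<1/2$, using $|\Lambda|>|\Gamma|/2$ and so losing $\log 2$ instead of your $1/2$; your explicit handling of the trivial case $|\Lambda|\ge 1/2$ and your check that $0.1\,C>10^4$ absorbs this loss fill in details the paper leaves implicit.
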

The inequality concerning $\log |\Gamma|$ is from \cite{Buge}. The inequality concerning $\log |\Lambda|$ follows from the one concerning $\log |\Gamma|$ by noting that 
for $|\Lambda|<1/2$, we have that $|\Lambda|>|\Gamma|/2$ (see \eqref{eq2.5g}).

%

\subsection{Reduction methods}

\subsubsection{Continued fractions}

After applying Theorem \ref{thm:Matg}, we get upper bounds on our variables. However,  such upper bounds are too large, thus there is need to reduce them. In this paper, we use the following result related with continued fractions (see Theorem 8.2.4 in \cite{ME}).

\begin{lemma}[Legendre]\label{lem:Legendre} Let $ \mu $ be an irrational number, $[a_0;a_1,a_2,\ldots]$ be the continued fraction expansion of $\mu$. Let $p_i/q_i=[a_0;a_1,a_2,\ldots,a_i]$, for all $i\ge 0$, be all the convergents of the continued fraction of $ \mu$, and $M$ be a positive integer. Let $ N $ be a non--negative integer such that
	$ q_{N} > M $.
	Then putting $ a(M) := \max \{a_{i}: i=0,1,2,\ldots,N   \}$, the inequality
	$$ \bigg| \mu-\frac{r}{s}   \bigg| > \dfrac{1}{(a(M)+2)s^2},  $$
	holds for all pairs $ (r, s) $ of positive integers with $ 0 < s < M $. 
\end{lemma}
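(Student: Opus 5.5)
The plan is to split according to whether $r/s$ is very close to $\mu$ or not. Close approximations are forced to be convergents by Legendre's classical criterion, and then the exact error formula for convergents gives the bound. Throughout, $\mu$ is irrational, so $\mu\neq r/s$ and all quantities below are nonzero.

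\emph{Case 1.} Suppose $|\mu-r/s|\ge \frac{1}{2s^2}$. Since $a(M)\ge 0$, we have $(a(M)+2)s^2\ge 2s^2$. Hence the required inequality $|\mu-r/s|>\frac{1}{(a(M)+2)s^2}$ holds, strictly as soon as $a(M)\ge 1$. If $a(M)=0$, then all $a_i$ with $i\le N$ vanish, which can only happen for $N=0$ with $q_0=1>M$; this is impossible because $M$ is a positive integer. So $a(M)\ge 1$ and this case is done.

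\emph{Case 2.} Suppose $|\mu-r/s|<\frac{1}{2s^2}$. Write $r/s=r'/s'$ in lowest terms with $s'\le s$. Then also $|\mu-r'/s'|<\frac{1}{2s'^2}$, and by Legendre's criterion $r'/s'=p_k/q_k$ for some $k\ge 0$. Since $q_k=s'\le s<M<q_N$ and the denominators $q_i$ are nondecreasing, we get $k<N$, so $k+1\le N$.

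Next use the standard identity
$$\mu-\frac{p_k}{q_k}=\frac{(-1)^k}{q_k(\alpha_{k+1}q_k+q_{k-1})},$$
where $\alpha_{k+1}=[a_{k+1};a_{k+2},\ldots]$ is the complete quotient, so $\alpha_{k+1}<a_{k+1}+1$. Together with $q_{k-1}\le q_k$ this gives
$$\Bigl|\mu-\frac{r}{s}\Bigr|>\frac{1}{(a_{k+1}+2)q_k^2}\ge \frac{1}{(a(M)+2)s^2},$$
using $a_{k+1}\le a(M)$ (because $k+1\le N$) and $q_k\le s$.

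The only delicate points are bookkeeping. One is the reduction to lowest terms, which is needed so that Legendre's criterion applies. The other is checking that the relevant index $k+1$ lies in the range $0,\ldots,N$ over which $a(M)$ is taken; this is exactly where the hypothesis $q_N>M$ is used.
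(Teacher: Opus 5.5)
Your proof is correct and follows the standard argument. The paper gives no proof of its own and only cites Theorem 8.2.4 of its reference [ME], which is this same combination: by Legendre's criterion, a fraction that is not a convergent satisfies $|\mu-r/s|\ge 1/(2s^2)$, while a convergent $p_k/q_k$ with $q_k\le s<M<q_N$ has $k+1\le N$ and $|\mu-p_k/q_k|>1/((a_{k+1}+2)q_k^2)$.

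One small tidy-up: rather than starting from ``$a(M)\ge 0$'' and then excluding $a(M)=0$, note directly that $q_N>M\ge 1=q_0$ forces $N\ge 1$, so $a(M)\ge a_1\ge 1$ whatever the sign of $a_0$.
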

However, since there are no methods based on continued fractions to find a lower bound for linear forms in more than two variables with bounded integer coefficients, we use at some point a method based on the LLL--algorithm. We next explain this method in Subsection \ref{sec2.3}.

\subsubsection{Reduced Bases for Lattices and LLL--reduction methods}\label{sec2.3}

Let \( k \) be a positive integer. A subset \( \mathcal{L} \) of the \( k \)--dimensional real vector space \( \mathbb{R}^k \) is called a lattice if there exists a basis \( \{b_1, b_2, \ldots, b_k \}\) of \( \mathbb{R}^k \) such that
\begin{align*}
\mathcal{L} = \sum_{i=1}^{k} \mathbb{Z} b_i = \left\{ \sum_{i=1}^{k} r_i b_i \mid r_i \in \mathbb{Z} \right\}.
\end{align*}
In this situation we say that \( b_1, b_2, \ldots, b_k \) form a basis for \( \mathcal{L} \), or that they span \( \mathcal{L} \). We
call \( k \) the rank of \( \mathcal{L} \). The determinant \( \text{det}(\mathcal{L}) \) of \( \mathcal{L} \) is defined by
\begin{align*}
	 \text{det}(\mathcal{L}) = | \det(b_1, b_2, \ldots, b_k) |,
\end{align*}
with the \( b_i \) being written as column vectors. This is a positive real number that does not depend on the choice of the basis (see \cite{Cas} Sect 1.2).

Given linearly independent vectors \( b_1, b_2, \ldots, b_k \) in \( \mathbb{R}^k \), we refer back to the Gram--Schmidt orthogonalization technique. This method allows us to inductively define vectors \( b^*_i \) (with \( 1 \leq i \leq k \)) and real coefficients \( \mu_{i,j} \) (for \( 1 \leq j \leq i \leq k \)). Specifically,
\begin{align*}
	b^*_i &= b_i - \sum_{j=1}^{i-1} \mu_{i,j} b^*_j,~~~
  \mu_{i,j} = \dfrac{\langle b_i, b^*_j\rangle }{\langle b^*_j, b^*_j\rangle},
\end{align*}
where \( \langle \cdot , \cdot \rangle \)  denotes the ordinary inner product on \( \mathbb{R}^k \). Notice that \( b^*_i \) is the orthogonal projection of \( b_i \) on the orthogonal complement of the span of \( b_1, \ldots, b_{i-1} \), and that \( \mathbb{R}b_i \) is orthogonal to the span of \( b^*_1, \ldots, b^*_{i-1} \) for \( 1 \leq i \leq k \). It follows that \( b^*_1, b^*_2, \ldots, b^*_k \) is an orthogonal basis of \( \mathbb{R}^k \). 
\begin{definition}
The basis \( b_1, b_2, \ldots, b_n \) for the lattice \( \mathcal{L} \) is called reduced if
\begin{align*}
	\| \mu_{i,j} \| &\leq \frac{1}{2}, \quad \text{for} \quad 1 \leq j < i \leq n,~~
	\text{and}\\
	\|b^*_{i}+\mu_{i,i-1} b^*_{i-1}\|^2 &\geq \frac{3}{4}\|b^*_{i-1}\|^2, \quad \text{for} \quad 1 < i \leq n,
\end{align*}
where \( \| \cdot \| \) denotes the ordinary Euclidean length. The constant \( \frac{3}{4} \) above is arbitrarily chosen, and may be replaced by any fixed real number \( \mathcal{P} \) with \( \frac{1}{4} < \mathcal{P} < 1 \),\rm{ (see \cite{LLL} Sect 1)}.
\end{definition}\noindent
Let $\mathcal{L}\subseteq\mathbb{R}^k$ be a $k-$dimensional lattice  with reduced basis $b_1,\ldots,b_k$ and denote by $B$ the matrix with columns $b_1,\ldots,b_k$. 
We define
\[
l\left( \mathcal{L},v\right)= \left\{ \begin{array}{c}
	\min_{u\in \mathcal{L}}||u-v|| \quad  ;~~ v\not\in \mathcal{L}\\
\min_{0\ne u\in \mathcal{L}}||u|| \quad  ;~~ v\in \mathcal{L}
\end{array}
\right.,
\]
where $||\cdot||$ denotes the Euclidean norm on $\mathbb{R}^k$. It is well known that, by applying the
LLL-algorithm, it is possible to give in polynomial time a lower bound for $l\left( \mathcal{L},v\right)\ge c_2$ (see \cite{SMA}, Sect. V.4).
\begin{lemma}\label{lem2.5g}
	Let $v\in\mathbb{R}^k$ and $z=B^{-1}v$ with $z=(z_1,\ldots,z_k)^T$. Furthermore, 
	\begin{enumerate}[(i)]
		\item if $v\not \in \mathcal{L}$, let $i_0$ be the largest index such that $z_{i_0}\ne 0$ and put $\sigma:=\{z_{i_0}\}$, where $\{\cdot\}$ denotes the distance to the nearest integer.
		\item if $v\in \mathcal{L}$, put $\sigma:=1$.
	\end{enumerate}
	Now, define
\[
c_1 := \max_{1\le j\le k}\left\{\dfrac{\|b_1\|}{\|b_j^*\|}\right\}.
\]
Then, $l(\mathcal{L}, y) \ge  c_2 = \lambda\|b_1\|c_1^{-1}$.
\end{lemma}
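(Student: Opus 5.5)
The statement is the standard de~Weger/Smart bound, and the constant written $\lambda$ in the statement should be read as $\sigma$. I would prove it by looking at the Gram--Schmidt coordinates of a difference $u-v$ with $u\in\mathcal{L}$. Write $u=Bw$ with $w\in\mathbb{Z}^k$ and $v=Bz$, so that
$$u-v=\sum_{i=1}^k (w_i-z_i)\,b_i .$$

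The key observation is the following. Let $j$ be the largest index with $w_j-z_j\neq 0$. Each $b_i$ lies in the span of $b^*_1,\ldots,b^*_i$, and its $b^*_i$-component has coefficient $1$, since $b_i=b^*_i+\sum_{s<i}\mu_{i,s}b^*_s$. Hence the component of $u-v$ along $b^*_j$ is exactly $(w_j-z_j)\,b^*_j$. By orthogonality of the $b^*_i$,
$$\|u-v\|\ge |w_j-z_j|\,\|b^*_j\|.$$
By the definition of $c_1$ we have $\|b^*_j\|\ge \|b_1\|/c_1$ for every $j$. So it suffices to show $|w_j-z_j|\ge\sigma$.

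Case $v\in\mathcal{L}$, so $\sigma=1$. Then $z\in\mathbb{Z}^k$. Moreover $u\neq v$, because here $l(\mathcal{L},v)$ is taken over $u-v$ ranging through the nonzero lattice vectors. Thus $w-z$ is a nonzero integer vector, and $|w_j-z_j|\ge 1=\sigma$.

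Case $v\notin\mathcal{L}$. If $z_{i_0}$ is an integer, then $\sigma=0$ and there is nothing to prove, so assume it is not. There are two subcases.
\begin{itemize}
\item If some $w_i\neq 0$ with $i>i_0$, then $j>i_0$ and $z_j=0$. Hence $|w_j-z_j|=|w_j|\ge 1\ge \sigma$, since $\sigma\le 1/2$.
\item Otherwise $w_i=z_i=0$ for all $i>i_0$, while $w_{i_0}-z_{i_0}\ne 0$ because $z_{i_0}\notin\mathbb{Z}$. So $j=i_0$, and $|w_{i_0}-z_{i_0}|\ge \{z_{i_0}\}=\sigma$ by the definition of the distance to the nearest integer.
\end{itemize}

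Combining the cases gives $\|u-v\|\ge \sigma\|b_1\|/c_1$ for every admissible $u$, that is, $l(\mathcal{L},v)\ge c_2$. The only delicate point is the bookkeeping of which index carries the last nonzero coordinate. That is exactly why $i_0$ is chosen as the largest index with $z_{i_0}\ne0$, and why $\sigma\le 1$ covers the subcase of integer coordinates beyond $i_0$.
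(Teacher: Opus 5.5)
Your proof is correct, including the readings $\lambda=\sigma$ and $l(\mathcal{L},y)=l(\mathcal{L},v)$. The paper gives no proof of its own and only quotes the result from Smart's book (Sect.~V.4), where it is proved, following de~Weger, by essentially this argument: take the Gram--Schmidt component of $u-v$ at the last nonzero coordinate of $B^{-1}(u-v)$, and note that reducedness of the basis is not needed for the inequality itself, only for keeping $c_1$ small.
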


In our application, we are given real numbers $\eta_0,\eta_1,\ldots,\eta_k$ which are linearly independent over $\mathbb{Q}$ and two positive constants $c_3$ and $c_4$ such that 
\begin{align}\label{2.9}
|\eta_0+a_1\eta_1+\cdots +a_k \eta_k|\le c_3 \exp(-c_4 H),
\end{align}
where the integers $a_i$ are bounded as $|a_i|\le A_i$ with $A_i$ given upper bounds for $1\le i\le k$. We write $A_0:=\max\limits_{1\le i\le k}\{A_i\}$. 

The basic idea in such a situation, from \cite{Weg}, is to approximate the linear form \eqref{2.9} by an approximation lattice. So, we consider the lattice $\mathcal{L}$ generated by the columns of the matrix
$$ \mathcal{A}=\begin{pmatrix}
	1 & 0 &\ldots& 0 & 0 \\
	0 & 1 &\ldots& 0 & 0 \\
	\vdots & \vdots &\vdots& \vdots & \vdots \\
	0 & 0 &\ldots& 1 & 0 \\
	\lfloor M\eta_1\rfloor & \lfloor M\eta_2\rfloor&\ldots & \lfloor M\eta_{k-1}\rfloor& \lfloor M\eta_{k} \rfloor
\end{pmatrix} ,$$
where $M$ is a large constant usually of the size of about $A_0^k$ . Let us assume that we have an LLL--reduced basis $b_1,\ldots, b_k$ of $\mathcal{L}$ and that we have a lower bound $l\left(\mathcal{L},v\right)\ge c_2$ with $v:=(0,0,\ldots,-\lfloor M\eta_0\rfloor)$. Note that $ c_2$ can be computed by using the results of Lemma \ref{lem2.5g}. Then, with these notations the following result  is Lemma VI.1 in \cite{SMA}.
\begin{lemma}[Lemma VI.1 in \cite{SMA}]\label{lem2.6g}
	Let $S:=\displaystyle\sum_{i=1}^{k-1}A_i^2$ and $T:=\dfrac{1+\sum_{i=1}^{k}A_i}{2}$. If $c_2^2\ge T^2+S$, then inequality \eqref{2.9} implies that we either have $a_1=a_2=\cdots=a_{k-1}=0$ and $a_k=-\dfrac{\lfloor M\eta_0 \rfloor}{\lfloor M\eta_k \rfloor}$, or
	\[
	H\le \dfrac{1}{c_4}\left(\log(Mc_3)-\log\left(\sqrt{c_2^2-S}-T\right)\right).
	\]
\end{lemma}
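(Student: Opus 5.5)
The statement just above is Lemma VI.1 of \cite{SMA}, so I would prove it directly from the definition of $l(\mathcal{L},v)$ and the lower bound $l(\mathcal{L},v)\ge c_2$ supplied by Lemma \ref{lem2.5g}. The plan is to attach to each integer solution $(a_1,\ldots,a_k)$ of \eqref{2.9} a lattice vector that lies close to $v$. The lower bound $c_2$ then forces the linear form to be not too small, which bounds $H$.

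First I set $u:=\mathcal{A}(a_1,\ldots,a_k)^T\in\mathcal{L}$. Its first $k-1$ coordinates are $a_1,\ldots,a_{k-1}$, and its last coordinate is $\sum_{i=1}^k a_i\lfloor M\eta_i\rfloor$. Hence
$$
u-v=\Bigl(a_1,\ldots,a_{k-1},\ \textstyle\sum_{i=1}^k a_i\lfloor M\eta_i\rfloor+\lfloor M\eta_0\rfloor\Bigr).
$$
Writing $\lfloor M\eta_i\rfloor=M\eta_i-\theta_i$ with $0\le\theta_i<1$, the last coordinate equals $M(\eta_0+a_1\eta_1+\cdots+a_k\eta_k)-\theta_0-\sum a_i\theta_i$. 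Its absolute value is at most $M c_3\exp(-c_4H)+1+\sum A_i$. I would be careful with this constant: the claimed $T=(1+\sum A_i)/2$ suggests that the intended convention is rounding to the nearest integer instead of the floor, which halves the errors. I would either adopt that reading or simply track the factor.

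Next comes the case split. If $u=v$, then $a_1=\cdots=a_{k-1}=0$ and the last coordinate vanishes, so $a_k\lfloor M\eta_k\rfloor=-\lfloor M\eta_0\rfloor$, which is the exceptional alternative. The subtle point is that when $v\in\mathcal{L}$ we have $l(\mathcal{L},v)$ defined through nonzero vectors. So whenever $u\ne v$, we still get $\|u-v\|\ge l(\mathcal{L},v)\ge c_2$, since $u-v$ is then a nonzero lattice vector, or $v\notin\mathcal{L}$ and the bound holds trivially.

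In the remaining case, $c_2^2\le\|u-v\|^2\le S+\bigl(Mc_3e^{-c_4H}+T\bigr)^2$. The hypothesis $c_2^2\ge T^2+S$ makes $\sqrt{c_2^2-S}-T\ge0$. Rearranging gives $Mc_3e^{-c_4H}\ge\sqrt{c_2^2-S}-T$, and taking logarithms yields the stated bound on $H$. The main obstacle is only the bookkeeping of rounding errors needed to reach exactly $T$. Beyond that I would note that degenerate equality ($\sqrt{c_2^2-S}=T$) makes the bound vacuous, so strict inequality is implicitly needed for the bound to be meaningful.
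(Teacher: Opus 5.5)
Your argument is correct and is the standard proof of Smart's Lemma VI.1, which the paper quotes without proof. That proof compares $\|u-v\|$ for $u=\mathcal{A}(a_1,\ldots,a_k)^T$ with $l(\mathcal{L},v)\ge c_2$, with the case $u=v$ giving the exceptional alternative; your handling of $v\in\mathcal{L}$ through the nonzero-vector definition of $l$ is right. Your rounding caveat is also justified: with the floors as printed in the paper, the last coordinate of $u-v$ differs from $M(\eta_0+a_1\eta_1+\cdots+a_k\eta_k)$ by up to $1+\sum A_i=2T$, so the stated $T$ needs nearest-integer rounding in $\mathcal{A}$ and $v$ (as in Smart), or else $T$ must be doubled.
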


Finally, we present an analytic argument which is Lemma 7 from \cite{GL}. It is useful when obtaining upper bounds on some positive real variable involving powers of the logarithm of the variable itself.
\begin{lemma}[G{\'u}zman \& Luca, \cite{GL}]\label{Lem:Guz} If $ s \geq 1 $, $T > (4s^2)^s$ and $T > \displaystyle \frac{z}{(\log z)^s}$, then $$z < 2^s T (\log T)^s.$$	
\end{lemma}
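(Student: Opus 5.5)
We argue by contradiction, using the monotonicity of $f(w):=w/(\log w)^s$. Its derivative has the sign of $\log w-s$, so $f$ is increasing on $(e^s,\infty)$. Suppose that $z\ge Z_0:=2^sT(\log T)^s$. Since $T>(4s^2)^s\ge 4^s>e^s$, we have $\log T>1$ and hence $Z_0>T>e^s$. Both $z$ and $Z_0$ therefore lie in the range where $f$ is increasing, and we get
$$
T>f(z)\ge f(Z_0)=\frac{2^sT(\log T)^s}{\bigl(\log(2^sT(\log T)^s)\bigr)^s}.
$$
So it suffices to show that the denominator is at most $(2\log T)^s$. That would give $f(Z_0)\ge T$, which contradicts the display above.

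Since $\log$ is increasing, the required inequality $\log\bigl(2^sT(\log T)^s\bigr)\le 2\log T$ is equivalent to $2^sT(\log T)^s\le T^2$, that is, to $(2\log T)^s\le T$. Setting $u:=T^{1/s}$, this becomes $2\log T\le u$, i.e. $2s\log u\le u$, or equivalently $u/\log u\ge 2s$. Note that $u>4s^2\ge 4>e$.

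This inequality is the only real obstacle, and it is where the hypothesis $T>(4s^2)^s$ enters. The function $u/\log u$ is increasing for $u>e$, so for $u>4s^2$ we have
$$
\frac{u}{\log u}>\frac{4s^2}{\log(4s^2)}=\frac{4s^2}{2\log(2s)}.
$$
This is at least $2s$ exactly when $s\ge \log(2s)$. That holds at $s=1$ (since $1>\log 2$), and $s-\log(2s)$ has derivative $1-1/s\ge 0$ for $s\ge 1$, so it holds for all $s\ge1$. Hence $(2\log T)^s\le T$, so $f(Z_0)\ge T$, contradicting $T>f(z)\ge f(Z_0)$. Therefore $z<2^sT(\log T)^s$.

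Finally, if $z\le e^s$ there is nothing to prove, since then $z\le e^s<T<Z_0$. Note also that the argument uses $\log z>0$, which is implicit in the hypothesis $T>z/(\log z)^s$.
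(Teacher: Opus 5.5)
Your proof is correct and complete. The paper does not prove this statement; it imports it as Lemma~7 of Guzm\'an and Luca. So your argument is a self-contained justification of a cited result, not an alternative to a proof in the text.

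Your route is the natural one. Monotonicity of $f(w)=w/(\log w)^s$ on $(e^s,\infty)$ reduces the claim to $f(2^sT(\log T)^s)\ge T$. This is equivalent to $(2\log T)^s\le T$, i.e.\ to $u/\log u\ge 2s$ with $u=T^{1/s}>4s^2$. Your verification of the last inequality via $s\ge\log(2s)$ is right, and in fact strict, so $(2\log T)^s<T$.

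Two minor remarks:
\begin{itemize}
\item Your closing paragraph about $z\le e^s$ is redundant, since assuming $z\ge Z_0$ already gives $z>e^s$.
\item Your proof uses the hypothesis $T>(4s^2)^s$ only to secure two facts. The first is $T>e^s$, which with $s\ge 1$ gives $\log T>1$ and $Z_0>e^s$. The second is $(2\log T)^s\le T$. The lemma therefore holds under these weaker assumptions as well.
\end{itemize}
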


SageMath 10.6 is used to perform all the computations in this work.

\section{Proof of Theorem \ref{th:main}}\label{sec:proof}

\subsection{Reduction to Zeckendorf representations}
 
We start with a discussion of $n$ versus $m$.

Since we assume that $n\ge m$, we might have $n=m$, in which case
$$
F_n+F_m=2F_n=F_{n+1}+F_{n-2},
$$
or $m=n-1$, in which case we have 
$$
F_n+F_m=F_n+F_{n-1}=F_{n+1}+F_0.
$$
In all other cases $m\le n-2$. We always work with the representations appearing in the right-hand side above. Thus, for us, $d:=n-m\ge 2$. This is called the Zeckendorf representation and is unique up to identifying 
$F_1$ with $F_2$ when $m\in \{1,2\}$. Also, when $m\in \{1,2\}$ we will always work with the value of $m$ such that $n\equiv m\pmod 2$.  

\subsection{An absolute upper bound on $n$ knowing $p$ and $q$}
For the rest of the paper, we note that since $p,q\ge 2$ are distinct primes, then we can assume without loss of generality that $p>q\ge 2$. We first prove the following result.
\begin{lemma}\label{lem3.1g}
	Let $n,m,x,y$ be nonnegative integer solutions to \eqref{eq:main} for fixed primes $p>q\ge 2$ with $n\ge m+2$, then 
	$$n-m<2\cdot 10^{15}(\log p)(\log q) \log n.$$
\end{lemma}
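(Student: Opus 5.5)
We isolate the dominant term $\alpha^n$ and apply Theorem \ref{thm:Matg} to a linear form in three logarithms: $\alpha$, $p$ and $q$. By the Binet formula \eqref{eq:F}, equation \eqref{eq:main} can be written as
$$
\frac{\alpha^n}{\sqrt5}-p^xq^y=\frac{\beta^n}{\sqrt5}-F_m .
$$
Since $F_m<0.52\alpha^m$ by \eqref{eq:Fib-ieq} and $|\beta|^n<1$, the right-hand side is at most $0.6\alpha^m+1$ in absolute value. Dividing by $\alpha^n/\sqrt5$ gives
$$
\Gamma:=p^xq^y\alpha^{-n}\sqrt5-1,\qquad |\Gamma|<\frac{c}{\alpha^{n-m}}
$$
for a small absolute constant $c$ (say $c=4$). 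Next we check that $\Gamma\neq 0$. Otherwise $\alpha^n=\sqrt5\,p^xq^y$. Squaring gives $\alpha^{2n}\in\mathbb Q$, and since $\alpha$ is a unit of infinite order this is impossible for $n\ge1$.

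We now apply Theorem \ref{thm:Matg} with $t=3$, $\mathbb L=\mathbb K$ and $D=2$. Take $\lambda_1=p$, $\lambda_2=q$, $\lambda_3=\alpha$ (or fold $\sqrt5$ into the $q$ term) with exponents $b_1=x$, $b_2=y$, $b_3=-n$, and put $\sqrt5$ in as a fourth number $\lambda_4=\sqrt5$ with $b_4=1$. Then $t=4$, or $t=3$ if we combine $\alpha^{-n}\sqrt5$ suitably. Using $t=4$ is the safe choice. The heights give
$$
A_1=2\log p,\qquad A_2=2\log q,\qquad A_3=\log\alpha,\qquad A_4=\log 5 .
$$
By \eqref{eq:2.4g}, $x,y<n$, so we may take $B=n$. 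The theorem yields
$$
\log|\Gamma|>-1.4\cdot 30^{7}\cdot 4^{4.5}\cdot 4(1+\log2)(1+\log n)\cdot 4(\log p)(\log q)(\log\alpha)(\log5).
$$
Comparing this with $\log|\Gamma|<\log c-(n-m)\log\alpha$ and using $1+\log n<2\log n$ for $n\ge3$, we obtain $n-m<K(\log p)(\log q)\log n$ for an explicit $K$. A rough evaluation: $30^7\approx2.19\cdot10^{10}$, $4^{4.5}=512$, and $1.4\cdot 16\cdot 1.69\cdot\log5\approx 61$, so the constant is of order $7\cdot10^{14}$ up to factors of $2$. This is plausibly below $2\cdot10^{15}$. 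The small cases $n\le 2$, or $n$ tiny, are trivial because then $n-m\le n$ is small.

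The main obstacle is matching the constant $2\cdot10^{15}$. With $t=4$ the bound is tight, so to keep it I would reduce to $t=3$. Write $\Gamma=(p^x q^y)\cdot\gamma^{\,}\!-1$ with $\gamma=\sqrt5\alpha^{-n}$? That does not have bounded height. Instead, I would use $\lambda_3=\alpha$ and $\lambda_4=\sqrt5$ as is and carefully track the constants: $A_3=\log\alpha\approx0.481$, and $1+\log n\le 2\log n$. If the numerics still exceed $2\cdot10^{15}$, I would use the absorbed form $\lambda_3=\alpha$ with $\sqrt5$ replaced via $\sqrt5=2\alpha-1$; this keeps $t=4$ but the product of the small $A_i$ is about $0.77$, which should fit within the stated bound.
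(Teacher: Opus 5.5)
Your proposal is correct and is essentially the paper's proof: the same four-logarithm application of Matveev's theorem with $(\lambda_1,\dots,\lambda_4)=(p,q,\alpha,\sqrt5)$, $D=2$, $B=n$, $(A_1,\dots,A_4)=(2\log p,2\log q,\log\alpha,\log 5)$, and the same non-vanishing argument via $\alpha^{2n}\notin\mathbb{Q}$ (dividing by $\alpha^n/\sqrt5$ instead of by $p^xq^y$ is immaterial). Your last paragraph is unnecessary, and substituting $\sqrt5=2\alpha-1$ would change nothing since it is the same number; the $t=4$ bound already gives $\log|\Gamma|>-3.3\cdot 10^{14}(1+\log n)(\log p)(\log q)>-8.1\cdot 10^{14}(\log p)(\log q)\log n$ for $n\ge 2$, and dividing by $\log\alpha$ (after absorbing $\log 4/\log\alpha$) lands below $2\cdot 10^{15}$.
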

\begin{proof}
We start by rewriting \eqref{eq:main} as
\begin{align*}
	\frac{\alpha^n-\beta^n}{\sqrt{5}}+\frac{\alpha^m-\beta^m}{\sqrt{5}}&=p^xq^y,\\
	\frac{\alpha^n}{\sqrt 5}-p^xq^y&=\frac{-\alpha^m+\beta^n+\beta^m}{\sqrt 5}.
\end{align*}
Dividing through the above relation by $p^xq^y=F_n+F_m>0.38\alpha^n$ and estimating the right-hand side using the fact that $\beta=-\alpha^{-1}$, we get
\begin{align*}
\left|\frac{\alpha^n}{p^xq^y\sqrt{5}}-1\right|&<\frac{\alpha^m+\alpha^{-n}+\alpha^{-m}}{0.38 \alpha^n \sqrt{5}}
=\alpha^{m-n}\left( \frac{1+\alpha^{-(n+m)}+\alpha^{-2m}}{0.38 \sqrt{5}}\right)\\
&\le\alpha^{m-n}\left( \frac{1+\alpha^{-(1+0)}+\alpha^{-2\cdot 0}}{0.38 \sqrt{5}}\right)\\
&< \frac{4 }{\alpha^{d}},
\end{align*}
where $d:=n-m$. So, we conclude that
\begin{align}\label{3.1g}
	\left|p^{-x}q^{-y}\alpha^n(\sqrt{5})^{-1}-1\right| <\frac{4}{\alpha^{d}}.
\end{align}
We now apply Theorem \ref{thm:Matg} to the left-hand side of \eqref{3.1g}. Let $\Gamma_1:=p^{-x}q^{-y}\alpha^n(\sqrt{5})^{-1}-1=e^{\Lambda_1}-1$.
Notice that $\Gamma_1\ne 0$, otherwise we would have $\alpha^n/\sqrt{5}=p^xq^y$. This implies that $\alpha^{2n}\in {\mathbb Q}$, a contradiction  for $n\ge 1$. We use the field ${\mathbb K}$ of
degree $D := 2$. Here, $t := 4$,
\begin{alignat*}{4}
	\lambda_{1} &:= p,         &\quad \lambda_{2} &:= q,         &\quad \lambda_{3} &:= \alpha,     &\quad \lambda_{4} &:= \sqrt{5}, \\
	b_1         &:= -x,        &\quad b_2    &:= -y,         &\quad b_3    &:= n,          &\quad b_4    &:= -1.
\end{alignat*} 
Now, we can write $\max\{|b_1|, |b_2|, |b_3|, |b_4|\} = \max\{x,y,n,1\}=n$ by \eqref{eq:2.4g}, so that we take $B:=n$. Also, 
$$A_i \geq \max\{Dh(\lambda_{i}), |\log\lambda_{i}|, 0.16\}\qquad \text{for all}\qquad i=1,2,3,4.
$$ 
So, 
$$A_1 := Dh(\lambda_{1}) = 2\log p, \quad A_2 := Dh(\lambda_{2}) = 2\log q, \quad A_3 := Dh(\lambda_{3}) = 2\cdot\frac{1}{2} \log\alpha = \log \alpha,
$$
and 
$$A_4 := Dh(\lambda_{4}) = 2\log\sqrt{5} = \log 5.
$$ 
Then, by Theorem \ref{thm:Matg},
\begin{align}\label{3.2g}
	\log |\Gamma_1| &> -1.4\cdot 30^7 \cdot 4^{4.5}\cdot 2^2 (1+\log 2)(1+\log n)(2\log p)(2\log q)(\log \alpha)(\log 5)\nonumber\\
	&> -8.1\cdot 10^{14}(\log p)(\log q) \log n,
\end{align}
where we have used the fact $n\ge 2$. 
Comparing \eqref{3.1g} and \eqref{3.2g}, we get
\begin{align*}
	d:=n-m<2\cdot 10^{15}(\log p)(\log q) \log n.
\end{align*}
This proves Lemma \ref{lem3.1g}.
\end{proof}

Next, we prove the following.
\begin{lemma}\label{lem3.2g}
Let $n,m,x,y$ be nonnegative integer solutions to \eqref{eq:main} for fixed primes $p>q\ge 2$ with $n\ge m+2$, then 
	$$n<10^{35}(\log p)^4(\log q)^2.$$
\end{lemma}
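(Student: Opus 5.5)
We will proceed in the standard Bravo--Luca way: a second linear form in logarithms, in which the small term $\alpha^m$ is absorbed together with $\alpha^n$. Using the Binet formula we rewrite \eqref{eq:main} as
\begin{align*}
\frac{\alpha^n(1+\alpha^{m-n})}{\sqrt5}-p^xq^y=\frac{\beta^n+\beta^m}{\sqrt5}.
\end{align*}
Dividing by $p^xq^y>0.38\alpha^n$ gives
\begin{align*}
\left|p^{-x}q^{-y}\alpha^{n}\,\delta_{d}' -1\right|<\frac{2}{\alpha^{n}},\qquad \delta_d':=\frac{1+\alpha^{-d}}{\sqrt5},\quad d=n-m\ge 2.
\end{align*}
Call the left-hand side $\Gamma_2$.

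\textbf{Nonvanishing of $\Gamma_2$.} If $\Gamma_2=0$, then $\alpha^n(1+\alpha^{-d})/\sqrt5=p^xq^y$. Conjugating this relation and comparing gives $\beta^n(1+\beta^{-d})=-\alpha^n(1+\alpha^{-d})$. Taking absolute values, the left-hand side is at most $2\alpha^{d-n}\cdot\alpha^{0}$ (roughly $|\beta|^{n-d}$), while the right-hand side is at least $\alpha^n$. This is impossible for $n\ge 1$.

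\textbf{Applying Matveev.} We apply Theorem \ref{thm:Matg} with $t=4$, $D=2$, $\lambda_1=p$, $\lambda_2=q$, $\lambda_3=\alpha$, $\lambda_4=\delta_d'$, and $B=n$. The heights are controlled by
\begin{align*}
h(\delta_d')\le h(1+\alpha^{-d})+h(\sqrt5)\le \tfrac d2\log\alpha+\log 2+\tfrac12\log 5,
\end{align*}
so we may take $A_4:=d\log\alpha+2\log 2+\log 5<d$, say, for $d\ge 8$; small $d$ are absorbed into a constant. The other values $A_1=2\log p$, $A_2=2\log q$, $A_3=\log\alpha$ are as in Lemma \ref{lem3.1g}. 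Matveev's bound then yields
\begin{align*}
n\log\alpha-\log 2<c\,(\log p)(\log q)(1+\log n)\,d,\qquad c\approx 8\cdot10^{14}.
\end{align*}

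\textbf{Inserting Lemma \ref{lem3.1g}.} Substituting the bound $d<2\cdot10^{15}(\log p)(\log q)\log n$ gives
\begin{align*}
n<C\,(\log p)^2(\log q)^2(\log n)^2,\qquad C\approx 4\cdot10^{30}.
\end{align*}

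\textbf{Removing the logarithms.} We apply Lemma \ref{Lem:Guz} with $s=2$ and $T=C(\log p)^2(\log q)^2$; the hypothesis $T>(16)^2$ holds trivially. This gives
\begin{align*}
n<4T(\log T)^2.
\end{align*}
Next we estimate $\log T\le \log C+2\log\log p+2\log\log q$. Since $q\ge 2$, the quantity $\log\log q$ is bounded by a constant multiple of $\log q$, and similarly for $p$. Hence $(\log T)^2$ is bounded by a constant of size roughly $(72)^2$ times $(\log p)^2$. We do not need to put any factor on $\log q$ here, because $\log q\ge\log 2$ and we only need to absorb constants.

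\textbf{Main obstacle.} The delicate point is purely numerical bookkeeping. The constant $4C\cdot(\log C)^2$ must, after absorbing the $\log\log p$ terms into $(\log p)^2$ using $\log p\ge\log 3$, fit under $10^{35}$. This requires using the sharp inequality $\log\log p\le c'\log p$ together with Matveev's constant and $C$ computed carefully, rather than crudely. If the constants come out slightly too large, we would refine $A_4$ (for instance, using $h(1+\alpha^{-d})\le \frac d2\log\alpha+\log2$ exactly) and $1+\log n<1.1\log n$ for large $n$.
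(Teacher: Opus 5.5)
Your proposal is correct and follows the paper's proof essentially step for step: the same linear form $\Gamma_2$ with $\lambda_4=(1+\alpha^{-d})/\sqrt5$, nonvanishing by conjugation, Matveev with $A_4$ controlled via Lemma~\ref{lem3.1g}, and Lemma~\ref{Lem:Guz} with $s=2$. Your height bound $Dh(\lambda_4)\le d\log\alpha+\log 20$ is even slightly sharper than the paper's. The one point to fix is the justification in the last step: absorbing $2\log\log q$ into $\log p$ uses the hypothesis $q<p$ (the paper writes $\log T\le\log(1.3\cdot 10^{30}(\log p)^4)$), not $\log q\ge\log 2$. Once that is said, your own constants already close, since $\log T\le 70.5+4\log\log p\le 72\log p$ for $p\ge 3$ and $4\cdot(4\cdot 10^{30})\cdot 72^2<8.3\cdot 10^{34}<10^{35}$.
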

\begin{proof}
	We go back to \eqref{eq:main} and rewrite it as
	\begin{align*}
		\frac{\alpha^n+\alpha^m}{\sqrt 5}-p^xq^y&=\frac{\beta^n+\beta^m}{\sqrt 5}.
	\end{align*}
Like before, we divide through the above relation by $p^xq^y=F_n+F_m>0.38\alpha^n$ and estimate the right-hand side using the fact that $\beta=-\alpha^{-1}$. We get
	\begin{align}\label{lf1}
		\left|\frac{\alpha^n\left(1+\alpha^{-d}\right)}{p^xq^y\sqrt{5}}-1\right|&<\frac{\alpha^{-n}+\alpha^{-m}}{0.38 \alpha^n \sqrt{5}}
		\le\alpha^{-n}\left( \frac{\alpha^{-1}+\alpha^{-0}}{0.38 \sqrt{5}}\right)
		< \frac{2}{ \alpha^{n}}.
	\end{align}
	So, we conclude that
	\begin{align}\label{3.3g}
		\left|p^{-x}q^{-y}\alpha^n\left(\dfrac{1+\alpha^{-d}}{\sqrt5}\right)-1\right| < \frac{2}{\alpha^{n}}.
	\end{align}
	We now apply Theorem \ref{thm:Matg} to the left-hand side of \eqref{3.1g}. Let 
	$$
	\Gamma_2:=p^{-x}q^{-y}\alpha^n\left(1+\alpha^{-d}\right)(\sqrt{5})^{-1}-1=e^{\Lambda_2}-1.
	$$
	Clearly $\Gamma_2\ne 0$, otherwise we would have 
	$$
	\frac{\alpha^n+\alpha^m}{\sqrt{5}}=p^xq^y.
	$$
	Conjugating the above in ${\mathbb K}$ we would get
	$$
	\frac{\alpha^n+\alpha^m}{\sqrt{5}}=-\frac{\beta^n+\beta^m}{\sqrt{5}}.
	$$
	The absolute value of the right-hand side above is $<2/{\sqrt{5}}$, which shows that $\alpha^n<\alpha^n+\alpha^m<2$, a contradiction for $n\ge 2$. 
	 We again use the field ${\mathbb K}$ with $D = 2$, $t = 4$ and 
	\begin{alignat*}{4} 
		\lambda_{1} &:= p,         &\quad \lambda_{2} &:= q,         &\quad \lambda_{3} &:= \alpha,     &\quad \lambda_{4} &:= \dfrac{1+\alpha^{-d}}{\sqrt5}, \\
		b_1         &:= -x,        &\quad b_2    &:= -y,         &\quad b_3    &:= n,          &\quad b_4    &:= 1.
	\end{alignat*}
	Moreover, we still take $B:=n$,
	$A_1 := 2\log p$, $A_2 := 2\log q$ and $A_3 :=\log \alpha$. For the case of $\lambda_{4}$, we compute
	\begin{align*}
	 Dh(\lambda_{4}) = 2h\left(\dfrac{1+\alpha^{-d}}{\sqrt5}\right) &\le 2\log\left(1+\alpha^{-d}\right) + 2\log\sqrt5\\
	&\le 2d\log\alpha+2\log\sqrt{5}+2\log 2\\
	 &< 2\left(2\cdot 10^{15}(\log p)(\log q) \log n\right)\log\alpha+2\log\sqrt{5}+2\log 2\\
	 &<2.1\cdot 10^{15}(\log p)(\log q)\log n.
 \end{align*}
We thus define $A_4:=2.1\cdot 10^{15}(\log p)(\log q) \log n$. So, by Theorem \ref{thm:Matg}, we have 
	\begin{align}\label{3.4g}
		\log |\Gamma_1| &> -1.4\cdot 30^7 \cdot 4^{4.5}\cdot 2^2 (1+\log 2)(1+\log n)(2\log p)(2\log q)(\log \alpha)\cdot 2.1\cdot 10^{15}(\log p)(\log q) \log n\nonumber\\
		&> -6.2\cdot 10^{29}(\log p)^2(\log q)^2 (\log n)^2,
	\end{align}
	where we have used the fact that $n\ge 10$. 
	Comparing \eqref{3.3g} and \eqref{3.4g}, we get
	\begin{align*}
		n<1.3\cdot 10^{30}(\log p)^2(\log q)^2 (\log n)^2.
	\end{align*}
 	We apply Lemma \ref{Lem:Guz} to the above inequality with $ z:=n $, $ s:=2 $ and $T:=1.3\cdot 10^{30}(\log p)^2(\log q)^2$.
Since $T>(4\cdot 2^2)^2$, we get
$$n<2^s T(\log T)^s = 2^2 \cdot 1.3\cdot 10^{30}(\log p)^2(\log q)^2(\log 1.3\cdot 10^{30}(\log p)^4)^2 < 10^{35}(\log p)^4(\log q)^2,$$
which concludes the proof of Lemma \ref{lem3.2g}.
\end{proof}

\subsection{The case for small primes $2\le q<p\le 1000$}
At this stage we have enough to deal with the case $2\le q<p\le 1000$. Then, Lemma \ref{lem3.2g} gives an absolute bound on $n$, that is;
$$n<2\cdot10^{40}, $$
so we need to reduce this bound. Here, we use the LLL-reduction method to find a rather small bound for $n$.

To begin, we go back to equation \eqref{3.1g}. Assuming $d\ge 5$, we can write
\begin{align*}
	|\Lambda_1|=\left|n\log \alpha-x\log p-y\log q-\log \sqrt{5}\right|< \frac{6}{\alpha^{d}},
\end{align*}
where we used \eqref{eq2.5g}. So, for each $p$, $q$ with $2\le q<p\le 1000$, we consider the approximation lattice
$$ \mathcal{A}=\begin{pmatrix}
	1 & 0  & 0 \\
	0 & 1 & 0 \\
	\lfloor M\log (1/p)\rfloor & \lfloor M\log (1/q)\rfloor& \lfloor M\log\alpha \rfloor
\end{pmatrix},$$
with $M:= 8\cdot 10^{120}$ and choose $v:=\left(0,0,-\lfloor M\log (1/\sqrt{5}) \rfloor\right)$. Now, by Lemma \ref{lem2.5g}, we get 
$$c_1=1.9\cdot 10^{-42}\qquad \text{and}\qquad c_2=2.16\cdot 10^{41}.$$
Moreover, by inequalities \eqref{eq:2.4g} and Lemma \ref{lem3.2g}, we have $x,y<n<2\cdot10^{40}$ so
\[
A_i:=2\cdot10^{40},\qquad \text{for}\quad i=1,2,3.
\]
So, Lemma \ref{lem2.6g} gives $S=1.2\cdot 10^{81}$ and $T=3.1\cdot 10^{40}$. Since $c_2^2\ge T^2+S$, then choosing $c_3:=6$ and $c_4:=\log\alpha$, we get $d\le 385$.

Next, we revisit equation \eqref{3.3g} and write
\begin{align*}
	|\Lambda_2|=\left|m\log \alpha-x\log p-y\log q-\log \left(\dfrac{1+\alpha^{-d}}{\sqrt5}\right)\right|< \frac{3}{\alpha^{n}},
\end{align*}
where we used \eqref{eq2.5g} and the assumption that $n\ge 3$. So, for each $p$, $q$ with $2\le q<p\le 1000$, we again consider the approximation lattice
$$ \mathcal{A}=\begin{pmatrix}
	1 & 0  & 0 \\
	0 & 1 & 0 \\
	\lfloor M\log (1/p)\rfloor & \lfloor M\log (1/q)\rfloor& \lfloor M\log\alpha \rfloor
\end{pmatrix},$$
with $M:=  10^{121}$ and $v:=\left(0,0,-\lfloor M\log \left(\sqrt5 /(1+\alpha^{-d})\right) \rfloor\right)$ for each $d\in[0,385]$. So, by Lemma \ref{lem2.5g}, we get 
$$c_1=5.69\cdot 10^{-42}\qquad \text{and}\qquad c_2=4.8\cdot 10^{42}.$$
As before, by inequalities \eqref{eq:2.4g} and Lemma \ref{lem3.2g}, we have $x,y<n<2\cdot10^{40}$ so
\[
A_i:=2\cdot10^{40},\qquad \text{for}\quad i=1,2,3.
\]
Moreover, Lemma \ref{lem2.6g} gives the same values of $S$ and $T$ as before, so choosing $c_3:=3$ and $c_4:=\log\alpha$, we get $n\le 378$. This is for any solution 
$(n,m,x,y)$. 

At this point, we use SageMath to write a simple program to check for all products of prime numbers $p$ and $q$ restricted in the range $2\le q<p\le 1000$ such that the prime factorization $p^xq^y$, with $xy\ne 0$, has at least two representations as $F_n+F_m$, with $0\le m\le  n\le 378$ and $n, m\ne 2$. We find only the representations given in Theorem \ref{th:main}, see Appendix \ref{app1}.

\subsection{The case $q\le 1000$ and $p>1000$}

There are two sub-cases here, that is, the case $q\ne 5$ and $q=5$. We handle them separately. 

\subsubsection{The case $q\ne 5$}\label{subsec:3.4.1}

Here, we assume that we have at least two solutions $(n_i,m_i,x_i,y_i)$ for $i=1,2$. Our purpose is to bound $\min\{n_1,n_2\}$. For this we will first discuss $d_i:=n_i-m_i$ for $i=1,2$. 
If $d_i\ge 5$, we use 
\eqref{eq2.5g} to write \eqref{lf1} as
\begin{equation}
\label{eq:datleast5}
\left|n_i\log\alpha -x_i\log p-y_i\log q-\log \sqrt5\right| < \frac{6}{\alpha^{d_i}}.
\end{equation}
If $d_i=2$, then 
$$
p^{x_i}q^{y_i}=F_{n_i}+F_{n_i-2}=L_{n_i-1}=\alpha^{n_i-1}+\beta^{n_i-1},
$$
which leads to 
$$
|p^{x_i}q^{y_i}\alpha^{-(n_i-1)}-1|=\frac{1}{\alpha^{2(n_i-1)}},
$$
and since $n_i\ge 4$, we get the better inequality 
\begin{equation}
\label{eq:d1is2}
|(n_i-1)\log \alpha-x_i\log p-y_i\log q|<\frac{2}{\alpha^{2(n_i-1)}}.
\end{equation}
The cases $d_i\in \{3,4\}$ yield $q=2,3$, respectively, via the formulas 
$$F_{n_i}+F_{n_i-3}=2F_{n_i-1}\qquad {\text{\rm and}}\qquad F_{n_i}+F_{n_i-4}=3F_{n_i-2}. 
$$
Also $d_i\ne 10$ since 
$$
F_{n_i}+F_{n_i-10}=5L_{n_i-5}
$$
showing that $q=5$, which is not our case. 

Finally, assume that 
$d_1\ge 5,~d_2\ge 5$. We then have
\begin{align*}
	\left|n_1\log\alpha -x_1\log p-y_1\log q-\log \sqrt5\right| &< \frac{6}{\alpha^{d_1}},\\
	\left|n_2\log\alpha -x_2\log p-y_2\log q-\log \sqrt5\right| &< \frac{6}{\alpha^{d_2}},
\end{align*}
Eliminating $\log p$ between the two inequalities above, we get
\begin{equation}\label{eq:lin-f1}
	|\Lambda_3|:=\left|(n_1x_2-n_2x_1)\log \alpha-(x_2y_1-x_1y_2)\log q-(x_2-x_1)\log {\sqrt{5}}\right|<\frac{12n}{\alpha^{\min\{d_1,d_2\}}}.
\end{equation}
In the above elimination, we have used $6(x_2+x_1)<12n$ via \eqref{eq:2.4g}, where we put $n:=\max\{n_1,n_2\}$. 

The linear form \eqref{eq:lin-f1} is nonzero since $q\ne 5$. That is, if it were zero then
\begin{align*}
\alpha^{n_1x_2-n_2x_1}=q^{x_2y_1-x_1y_2} 5^{(x_2-x_1)/2}.	
\end{align*}
Taking norms and absolute values in ${\mathbb K}$, we get $q^{2(x_2y_1-x_1y_2)} 5^{x_2-x_1}=1$, so $x_1=x_2$ and $x_1y_2=x_2y_1$. Thus, also $y_1=y_2$, showing that 
$p^{x_1}q^{y_1}=p^{x_2}q^{y_2}$ so $(n_1,m_1,x_1,y_1)=(n_2,m_2,x_2,y_2)$ by virtue of the uniqueness of the Zeckendorf representation. We use again the field ${\mathbb K}$
of degree $D = 2$, $t = 3$, and the data
	\begin{alignat*}{3}
		\lambda_{1} &:= q,         &\quad \lambda_{2} &:= \alpha,     &\quad \lambda_{3} &:= \sqrt{5}, \\
		b_1         &:= -x_2y_1-x_1y_2, &\quad b_2    &:= (n_1x_2-n_2x_1), &\quad b_3    &:= -(x_2-x_1).
	\end{alignat*}
Next, we write $\max\{|b_1|, |b_2|, |b_3|\} =n^2$ via \eqref{eq:2.4g}, so $B:=n^2$. As before, 
$A_1 := 2\log q$, $A_2 := \log \alpha$ and $A_3 := \log 5$. 
Therefore, by Theorem \ref{thm:Matg},
\begin{align}\label{eq:log-f1}
	\Lambda_3 &> -1.4\cdot 30^6 \cdot 3^{4.5}\cdot 2^2 (1+\log 2)(1+\log n^2)(2\log q)(\log \alpha)(\log 5)\nonumber\\
	&> -6\cdot 10^{12}\log q \log n,
\end{align}
where we have used $n\ge 2$. 
Comparing \eqref{eq:lin-f1} and \eqref{eq:log-f1}, we get
\begin{align*}
	\min\{d_1,d_2\} <  9\cdot 10^{13} \log n,
\end{align*}
for all $q\le 1000$, where $n=\max\{n_1,n_2\}$. This was for $\min\{d_1,d_2\}\ge 5$, but the above inequality holds in the contrary case as well. 

Suppose $d_1\le d_2$. If $d_1\ne 2$, we use \eqref{3.3g} with $(n,m,x,y):=(n_1,m_1,x_1,y_1)$, and \eqref{eq2.5g}, to write 
\begin{equation}
\label{eq:oneoranother}
\left|n_1\log \alpha -x_1\log p-y_1\log q+\log\left( \frac{1+\alpha^{-d_1}}{\sqrt{5}}\right)\right|<\frac{4}{\alpha^{n_1}}.
\end{equation}
If $d_1=2$, we use \eqref{eq:d1is2} namely
\begin{equation}
\label{eq:d1is22}
|(n_1-1)\log \alpha-x_1\log p-y_1\log q|<\frac{2}{\alpha^{2(n_1-1)}}<\frac{4}{\alpha^{n_1}}\qquad (n_1\ge 4).
\end{equation}
We keep 
\begin{equation}
\label{eq:eqford2}
\left|n_2\log\alpha -x_2\log p-y_2\log q-\log \sqrt5\right| < \frac{6}{\alpha^{d_2}}.
\end{equation}
We eliminate $\log p$ between \eqref{eq:oneoranother} (or \eqref{eq:d1is2}) and \eqref{eq:eqford2} (if $d_1\ne 2$,~or $d_1=2$, respectively), getting
\begin{align}
\label{eq:case1}
|\tau_1|&:=\left|(n_1x_2-n_2x_1)\log \alpha-(y_1x_2-y_2x_1)\log q+x_2\log \left(\frac{1+\alpha^{-d_1}}{\sqrt{5}}\right)+x_1\log {\sqrt{5}}\right|\nonumber\\
&<\frac{10}{\alpha^{\min\{d_2,n_1\}}}\qquad (d_1\ge 3),
\end{align}
and
\begin{align}
\label{eq:case2}
|\tau_2|&:=\left|((n_1-1)x_2-n_2x_1)\log \alpha-(y_1x_2-y_2x_1)\log q+x_1\log {\sqrt{5}}\right|<\frac{10}{\alpha^{\min\{d_2,n_1\}}}\qquad (d_1=2),
\end{align}
respectively. Since $q\ne 5$, the  left-hand side of \eqref{eq:case2} is nonzero since otherwise $x_1=0$, which is not allowed. We study whether the left-hand side of \eqref{eq:case1} can be zero.  This leads to
$$
\left(\frac{1+\alpha^{-d_1}}{\sqrt{5}}\right)^{x_2} 5^{x_1/2} q^{-(y_1x_2-y_2x_1)}\alpha^{n_1x_2-x_1n_2}=1.
$$
Taking norms  in ${\mathbb K}$ we get 
$$
\delta_{d_1}^{x_2} 5^{x_1}=\pm q^{2(y_1x_2-x_2y_1)}.
$$
Since $x_1$ and $x_2$ are positive, it follows, by Lemma \ref{lem:1} that $\delta_{d_1}\in \{-L_{d_1}/5,(L_{d_1/2})^2/5\}$  and $x_1=x_2$ (otherwise $5$ is involved at a nonzero exponent in the left-hand side but not in the right-hand side). Next, we get that 
$$
L_{d_1}=q^{2(y_1-y_2)}\qquad {\text{\rm or}}\qquad L_{d_1/2}=q^{y_1-y_2},
$$
according to whether $d_1$ is odd or a multiple of $4$. Since $q\le 1000$, the only possibilities are 
$$(d_1,q)=(3,2),~(4,3),~(8,7),~(16,199),$$ 
and in all cases $y_1-y_2=1$. 
In particular, $y_1\ge 2$. However, in  these cases we have 
$$
F_{n_1}+F_{n_1-d_1}\in \{2F_{n_1-2},3F_{n_1-2},7F_{n_1-4},199F_{n_1-8}\}.
$$
Thus, we must have 
$$
F_{n_1-2}=2^{y_1-1}p^{x_1},~F_{n_1-2}=3^{y_1-1}p^{x_1},~F_{n_1-4}=7^{y_1-1}p^{x_1},~F_{n_1-8}=199^{y_1-1}p^{x_1},
$$
respectively, and in each case $y_1-1>0$.  Thus, $2\mid F_{n_1-2},~3\mid F_{n_1-2},~7\mid F_{n_1-4}$ and $199\mid F_{n_1-8}$, respectively, which shows that  
$$n_1-1=3k_1,\qquad n_1-2=4k_1,\qquad n_1-4=8k_1,\qquad n_1-8=16k_1,$$ 
respectively. Thus,
$$
F_{3k_1},~F_{4k_1},~F_{8k_1},~F_{16k_1},
$$ 
must be $q^{y_1-1} p^{x_1}$, respectively. In the last two cases, we have that both $3,~7$ divide $F_8$, which divides both $F_{8k_1}$ and $F_{16k_1}$, a contradiction.  In the first two cases, $F_{3k_1}$ and 
$F_{4k_1}$ must have a prime factor $p>1000$ but by the primitive divisor theorem they will each have at least two other prime factors 
distinct from $p$ (namely the primitive prime factor of $F_{k_1}$ and of $F_{3k_1}$, or $F_{4k_1}$, respectively).  
This shows that the forms appearing in the left-hand sides of \eqref{eq:case1} and \eqref{eq:case2} are nonzero. 

In \eqref{eq:case1}, we still use the field ${\mathbb K}$
of degree $D = 2$, $t = 4$, and the data
\begin{alignat*}{4} 
	\lambda_{1} &:= \sqrt5,         &\quad \lambda_{2} &:= q,         &\quad \lambda_{3} &:= \alpha,     &\quad \lambda_{4} &:= \dfrac{1+\alpha^{-d_1}}{\sqrt5}, \\
	b_1         &:= x_1,        &\quad b_2    &:= -(y_1x_2-y_2x_1),         &\quad b_3    &:= n_1x_2-n_2x_1,          &\quad b_4    &:= x_2.
\end{alignat*}
Next, $\max\{|b_1|, |b_2|, |b_3|,|b_4|\} =n^2$, so $B:=n^2$, 
$A_1 := \log 5$, $A_2 := 2\log q$ and $A_3 := \log \alpha$. For the case of $\lambda_{4}$, we compute
\begin{align*}
	Dh(\lambda_{4}) = 2h\left(\dfrac{1+\alpha^{-d_1}}{\sqrt5}\right) &\le 2\log\left(1+\alpha^{-d_1}\right) + 2\log\sqrt5\\
	&\le 2d_1 \log\alpha+2\log\sqrt{5}+2\log 2\\
	&< 2\left(8\cdot 10^{13}\log n\right)\log\alpha+2\log\sqrt{5}+2\log 2\\
	&<7.8\cdot 10^{13}\log n.
\end{align*}
We thus define $A_4:=7.8\cdot 10^{13}\log n$.
Therefore, by Theorem \ref{thm:Matg},
\begin{align}\label{eq:log-f4}
	\tau_1 &> -1.4\cdot 30^7 \cdot 4^{4.5}\cdot 2^2 (1+\log 2)(1+\log n^2)(2\log q)(\log \alpha)(\log 5)\cdot 7.8\cdot 10^{13}\log n\nonumber\\
	&> -5\cdot 10^{28}\log q (\log n)^2,
\end{align}
where we have used $n\ge 2$. 
Comparing \eqref{eq:case1} and \eqref{eq:log-f4}, we get
\begin{align*}
	\min\{n_1,d_2\} <  8\cdot 10^{29} (\log n)^2,
\end{align*}
for all $q\le 1000$, with $n=\max\{n_1,n_2\}$. This was for \eqref{eq:case1}, so we re-apply Theorem \ref{thm:Matg} with \eqref{eq:case2}. 

With \eqref{eq:case2}, we use the same field ${\mathbb K}$, $D = 2$, $t = 3$ and
\begin{alignat*}{3} 
	\lambda_{1} &:= \sqrt5,         &\quad \lambda_{2} &:= q,         &\quad \lambda_{3} &:= \alpha,  \\
	b_1         &:= x_1,        &\quad b_2    &:= -(y_1x_2-y_2x_1),         &\quad b_3    &:= (n_1-1)x_2-n_2x_1.
\end{alignat*}
As before, $B:=n^2$, 
$A_1 := \log 5$, $A_2 := 2\log q$ and $A_3 := \log \alpha$. 
Therefore, by Theorem \ref{thm:Matg},
\begin{align}\label{eq:log-f5}
	\tau_2 &> -1.4\cdot 30^6 \cdot 3^{4.5}\cdot 2^2 (1+\log 2)(1+\log n^2)(2\log q)(\log \alpha)(\log 5)\nonumber\\
	&> -6\cdot 10^{12}\log q \log n,
\end{align}
where we have used $n\ge 2$. 
Comparing \eqref{eq:case2} and \eqref{eq:log-f5}, we get
\begin{align*}
	\min\{n_1,d_2\} <  9\cdot 10^{13} \log n,
\end{align*}
for all $q\le 1000$, with $n=\max\{n_1,n_2\}$. Thus, in both cases, we have
\begin{align*}
	\min\{n_1,d_2\} <  8\cdot 10^{29} (\log n)^2.
\end{align*}

If  $n_1=\min\{d_2,n_1\}$, then $n_1<  6\cdot 10^{29} (\log n)^2$. So, combining \eqref{eq:main} and \eqref{eq:Fib-ieq} gives
\begin{align*}
	p^x < p^x q^y = F_n+F_m < 2F_n < 1.1\alpha^n,
\end{align*}
so that for the two solutions $(n_i,m_i,x_i,y_i)$ with $i=1,2$, we have 
\begin{align*}
	x_i\log p < n_i.
\end{align*}
Since $x_i\ge 1$, we have 
\begin{align}\label{eq:p-bound1}
	\log p < n_1<  6\cdot 10^{29} (\log n)^2.
\end{align}

Assume next that $d_2=\min\{n_1,d_2\}$ is bounded. If $d_1=2$ and $d_2=2$, then we get 
$$
L_{n_i-1}=p^{x_i}q^{y_i},\qquad i=1,2,
$$
and this contradicts the primitive divisor theorem. So, if $d_1=2$, then $d_2>2$.

We rewrite \eqref{eq:oneoranother} for $(n_i,m_i,x_i,y_i)$ and $i=1,2$ to get
\begin{gather}
\label{eq:group3} 
	\begin{aligned}
	\left|n_1\log \alpha +\log\left( \frac{1+\alpha^{-d_1}}{\sqrt{5}}\right)-x_1\log p-y_1\log q\right|&<\frac{4}{\alpha^{n_1}},\\
	\left|n_2\log \alpha +\log\left( \frac{1+\alpha^{-d_2}}{\sqrt{5}}\right)-x_2\log p-y_2\log q\right|&<\frac{4}{\alpha^{n_2}}.
\end{aligned}
\end{gather}
Eliminating $\log p$ from the two inequalities in \eqref{eq:group3}, we get
\begin{equation}\label{eq:ineq_a}
|\Lambda_4|:=\left|(n_1x_2-n_2x_1)\log \alpha-(y_1x_2-x_1y_2)\log q+x_2\log\left(\frac{1+\alpha^{-d_1}}{\sqrt{5}}\right)-x_1\log\left(\frac{1+\alpha^{-d_2}}{{\sqrt{5}}}\right)\right|
 < \frac{8n}{\alpha^{\min\{n_1,n_2\}}}.
\end{equation}
Next, we show that the left-hand side of \eqref{eq:ineq_a} is not zero.

Assume $\max\{d_1,d_2\}>1000$. Then the left-hand side of \eqref{eq:ineq_a} is not zero. Indeed, if $d_1\ne d_2$, assume that $d_1>d_2$. Then 
$N_{{\mathbb K}/{\mathbb Q}}((1+\alpha^{-d_1})/{\sqrt{5}})$ contains a primitive divisor (of $F_{2d_1}$ if $d_1$ is odd, of $F_{d_1}$ if $d_1\equiv 0\pmod 4$ and of $F_{d_1/2}$ if 
$d_1\equiv 2\pmod 4$). In any case, this is congruent to $\pm 1$ modulo $d_1$, so it is at least $d_1-1>q$ since $d_1>1000$ and $q<1000$. This shows that the left-hand side is not zero if $d_1\ne d_2$. But the same argument also shows that it is also nonzero 
if $d_1=d_2$ except possibly if $x_1=x_2$ and then $\log((1+\alpha^{-d_1})/{\sqrt{5}})$ does not appear in the left-hand side of \eqref{eq:ineq_a}, but if this is so, then the left-hand side is 
\begin{align*}
	|x_1(n_1-n_2)\log \alpha-x_1(y_1-y_2)\log q|,
\end{align*}
and this is not zero since if it were then also $y_1=y_2$ and $n_1=n_2$, a contradiction. 

It remains to consider the case where $\max\{d_1,d_2\} \le 1000$ (with $d_2>d_1\ge 2$) and $q \le 1000$, with $q \ne 5$. A brute-force computational check over this entire range is highly inefficient. To address this, we employed a refined strategy that checks for multiplicative independence only for specific, promising candidates. This method uses properties of Fibonacci and Lucas numbers to significantly reduce the search space. The linear form on the left-hand side of \eqref{eq:ineq_a} vanishes only when the following multiplicative relation holds for some nontrivial tuple $(\mathfrak{b}_1,\mathfrak{b}_2,x_1,x_2) \in \mathbb{Z}^4 \setminus \{(0,0,0,0)\}$:
\begin{align}\label{eq:multip}
	\alpha^{\mathfrak{b}_1} q^{\mathfrak{b}_2} \left( \frac{1+\alpha^{-d_1}}{\sqrt{5}} \right)^{x_1} \left( \frac{1+\alpha^{-d_2}}{\sqrt{5}} \right)^{-x_2} = 1,
\end{align}
where $\mathfrak{b}_1:=n_1x_2-n_2x_1$ and $\mathfrak{b}_2:=-(y_1x_2-x_1y_2)$. Note that by our earlier assumption that $x_iy_i\ne 0$, we also have that $x_i, y_i\ne0$ while checking the multiplicative independence above.

Our computational check was performed in two parts (see Appendix \ref{app2}):
\begin{enumerate}[(i)]
	\item We first considered all pairs $2 \le d_1 \le 12$ and $d_1<d_2\le 1000$, restricting the prime $q$ to be a prime factor of the associated Fibonacci or Lucas numbers.
	\item We then checked pairs $d_1 > 13$ and $d_2 > d_1$ where $d_1|d_2$ and $d_2/d_1$ is an odd prime. The candidate for the prime $q$ was determined by the presence of a unique ``extra" prime factor in the factorization of the Lucas or Fibonacci number associated with $d_2$ that was not a factor of the number associated with $d_1$.
\end{enumerate}
This optimized computation confirmed the existence of multiplicative dependencies only for the following values:
\begin{align*}
	(d_1,d_2,q)\in \{(2,6,2),(2,14,13),(2,22,89),(2,26,233),(3,12,3), (4,20,41),(6,18,17)\}.
\end{align*}
Recall that any $d_i=2$ leads to $F_n+F_{n-2}=L_{n-1}$, where $L_n$ is the $n$-th Lucas number. Lucas numbers $L_n$'s are never divisible by odd Fibonacci numbers of odd index. That is, $L_n$'s are never divisible by $5(=F_5)$, $13(=F_7)$, $89(=F_{11})$, $233(=F_{13})$. In fact, we have that $\gcd(F_a,F_b)=1$ or $2$ ($2$ when both $a$ and $b$ are multiple of $3$) \textbf{except} when putting $d:=\gcd(a,b)$, we have $a/d$ is even and $b/d$ is odd in which case the above $\gcd$ is exactly $L_d$. In particular, if $a$ is odd, then with $d=\gcd(a,b)$, we have that $a/d$ is odd. So, $\gcd(F_a,L_b)=1,2$, but it cannot be $2$ when $F_a$ is odd. This shows that $\gcd(F_a,L_b)=1$ for all $a$ odd such that $F_a$ is also odd. We now remain with a few candidates as
\begin{align*}
	(d_1,d_2,q)\in &\{(3,12,3), (4,20,41),(6,18,17)\}.
\end{align*}
At this point, we remain with one of them with $d_2=4$ namely $(4,20,41)$. Here we have 
\begin{align*}
	F_n+F_{n-20}&=F_{n-10}L_{10}=3\cdot 41\cdot F_{n-10},\\
	F_m+F_{m-4}&=F_{m-2}L_2=3\cdot F_{m-2},
\end{align*}
so $\{p,q\}=\{3,41\}$ and this case belongs to $p,q$ both $\le 1000$. Lastly, there are still the cases $(3,12,3)$ and $(6,18,17)$. They are similar and give that $q=2$. Indeed $F_n\equiv F_{n-3} \pmod 2$ so if $d_i$ is a multiple of $3$ then $F_n+F_{n-d}$ is even. This gives
$F_n+F_{n-12}=F_{n-6}L_6=2\cdot 3^2 F_{n-6}$, so $\{p,q\}=\{2,3\}$, a case already treated, and lastly
$F_n+F_{n-18}=F_9 L_{n-9}=2\cdot 17\cdot L_{n-9}$, so $\{p,q\}=\{2,17\}$, again a case already treated. Thu,s $\Lambda_4$ never vanishes.

To apply Theorem \ref{thm:Matg} with \eqref{eq:ineq_a}, we again use the field ${\mathbb L}:=\mathbb{Q}(\alpha)$ of
degree $D := 2$. Again, $t := 4$,
\begin{alignat*}{4} 
	\lambda_{1} &:= q,         &\quad \lambda_{2} &:= \alpha,         &\quad \lambda_{3} &:= \frac{1+\alpha^{-d_1}}{{\sqrt{5}}},     &\quad \lambda_{4} &:= \frac{1+\alpha^{-d_2}}{{\sqrt{5}}}, \\
	b_1         &:= -(y_1x_2-x_1y_2),        &\quad b_2    &:= n_1x_2-n_2x_1,         &\quad b_3    &:= x_2,          &\quad b_4    &:= -x_1.
\end{alignat*}
As before, we take $B:=n^2$,
$A_1 := 2\log q$ and $A_2 :=\log \alpha$. For the case of $\lambda_{3}$ and $\lambda_{4}$, we compute
\begin{align*}
	Dh(\lambda_{3}) = 2h\left(\frac{1+\alpha^{-d_1}}{{\sqrt{5}}}\right) &\le 2\log\left(1+\alpha^{-d_1}\right) + 2\log\sqrt5\\
	&\le 2d_1\log\alpha+2\log\sqrt{5}+2\log 2\\
	&< 2\left(8\cdot 10^{13} \log n\right)\log\alpha+2\log\sqrt{5}+2\log 2\\
	&<7.9\cdot 10^{13}\log n,
\end{align*}
and similarly $Dh(\lambda_{4})$.
We thus define $A_3=A_4:=7.9\cdot 10^{13} \log n$. So, by Theorem \ref{thm:Matg}, we have 
\begin{align}\label{eq:log-f2}
	\Lambda_4 &> -1.4\cdot 30^7 \cdot 4^{4.5}\cdot 2^2 (1+\log 2)(1+\log n^2)(2\log q)(\log \alpha)\cdot (7.9\cdot 10^{13} \log n)^2\nonumber\\
	&> -3\cdot 10^{42}\log q (\log n)^3,
\end{align}
where we have used $n\ge 10$. 
Comparing \eqref{eq:ineq_a} and \eqref{eq:log-f2}, we get
\begin{align*}
	\min\{n_1,n_2\} <  2\cdot 10^{43} (\log n)^3,
\end{align*}

Again, combining \eqref{eq:main} and \eqref{eq:Fib-ieq} gives $	p^x < p^x q^y = F_n+F_m < 2F_n < 1.1\alpha^n$,
so for the two solutions $(n_i,m_i,x_i,y_i)$ with $i=1,2$, we have $x_i\log p < n_i$ for $i=1,2$ and hence
\begin{align}\label{eq:p-bound}
	\log p < \min\{n_1,n_2\} <  2\cdot 10^{43} (\log n)^3.
\end{align}
Comparing the bounds in \eqref{eq:p-bound1} and \eqref{eq:p-bound}, we see that in all cases, the bound in \eqref{eq:p-bound} holds, so we use this bound and substitute it in Lemma \ref{lem3.2g} and write 
\begin{align*}
	n <  10^{35}(2\cdot 10^{43} (\log n)^3)^6 < 7\cdot 10^{294}(\log n)^{18}.
\end{align*}
We apply Lemma \ref{Lem:Guz} to the above inequality with $ z:=n $, $ s:=18 $ and $T:=7\cdot 10^{294}$.
Since $T>(4\cdot 18^2)^{18}$, we get
$$n<2^s T(\log T)^s = 2^{18} \cdot 7\cdot 10^{294}(\log 7\cdot 10^{294})^{18} < 10^{352},$$
which gives an absolute bound on $n$.

The bound $n<10^{352}$ is large, so we reduce it. We do this by revisiting \eqref{eq:lin-f1} and applying the LLL-algorithm to obtain a lower bound for the smallest nonzero value of this linear form, constrained by integer coefficients with absolute values not exceeding $n^2 <  10^{704}$. Specifically, we consider the lattice  
\[
\mathcal{A'} = \begin{pmatrix} 
	1 & 0 & 0 \\ 
	0 & 1 & 0 \\ 
	\lfloor M\log \alpha\rfloor & \lfloor M\log (1/q)\rfloor & \lfloor M\log \left(1/\sqrt 5\right) \rfloor
\end{pmatrix},
\]
where we set $M := 3\cdot 10^{2112}$ and $v := (0,0,0)$. Applying Lemma \ref{lem2.5g}, we obtain 
\[
 c_1 =  10^{-709} \quad \text{and} \quad c_2 =1.43\cdot 10^{706}. 
\]
Using Lemma \ref{lem2.6g}, we conclude that $S =2 \cdot 10^{1408}$ and $T = 1.51 \cdot 10^{704}$. Given that $c_2^2 \geq T^2 + S$, and selecting $c_3 := 12n<12\cdot 10^{352}$ and $c_4 := \log \alpha$, we establish the bound $\min\{d_1,d_2\} \leq 8418$. 

As before, assume without loss of generality that $d_1\le d_2$, so that $d_1 \le 8418$. Then, we go to \eqref{eq:case1} and \eqref{eq:case2} (if $d_1\ne 2$,~or $d_1=2$, respectively), we apply the LLL-algorithm to search for a lower bound for the smallest nonzero value of these linear forms. In \eqref{eq:case1}, we have 
\[
\mathcal{A^*} = \begin{pmatrix} 
	1 & 0 & 0 & 0\\ 
	0 & 1 & 0 & 0\\ 
	0 & 0 & 1 & 0\\ 
	\lfloor M\log \alpha\rfloor & \lfloor M\log (1/q)\rfloor & \left\lfloor M\log \left(\frac{1+\alpha^{-d_1}}{\sqrt{5}}\right) \right\rfloor& \left\lfloor M\log \sqrt5 \right\rfloor
\end{pmatrix},
\]
$M := 10^{2817}$ and $v := (0,0,0,0)$. Applying Lemma \ref{lem2.5g} gives $c_2 =4.39\cdot 10^{706}$. 
Using Lemma \ref{lem2.6g}, we get $S =3 \cdot 10^{1408}$ and $T = 2.1 \cdot 10^{704}$. Choosing $c_3 := 10$ and $c_4 := \log \alpha$, we obtain $\min\{n_1,d_2\} \leq 10102$. In \eqref{eq:case2}, we have 
\[
\mathcal{A^{**}} = \begin{pmatrix} 
	1 & 0  & 0\\ 
	0 & 1  & 0\\ 
	0 & 0  & 0\\ 
	\lfloor M\log \alpha\rfloor & \lfloor M\log (1/q)\rfloor & \left\lfloor M\log \sqrt5 \right\rfloor
\end{pmatrix},
\]
$M := 10^{2113}$ and $v := (0,0,0)$. Applying Lemma \ref{lem2.5g} gives $c_2 =8.1\cdot 10^{706}$ and Lemma \ref{lem2.6g} gives $S =2 \cdot 10^{1408}$ and $T = 1.51 \cdot 10^{704}$. Choosing $c_3 := 10$ and $c_4 := \log \alpha$, we obtain $\min\{n_1,d_2\} \leq 6732$. In both cases, $\min\{n_1,d_2\} \leq 10102$.

Next, if $d_2:=\min\{n_1,d_2\}$, then for each $d_1\in [1,8418]$ and $d_2\in [d_1,10102]$, we go to \eqref{eq:ineq_a} and again apply the LLL-algorithm to search for a lower bound for the smallest nonzero value of this linear form, constrained by integer coefficients with absolute values not exceeding $n^2 <  10^{704}$. We consider the lattice  
\[
\mathcal{A''} = \begin{pmatrix} 
	1 & 0 & 0 & 0\\ 
	0 & 1 & 0 & 0\\ 
	0 & 0 & 1 & 0\\ 
	\lfloor M\log \alpha\rfloor & \lfloor M\log (1/q)\rfloor & \left\lfloor M\log \left(\frac{1+\alpha^{-d_1}}{\sqrt{5}}\right) \right\rfloor& \left\lfloor M\log \left(\frac{\sqrt{5}}{1+\alpha^{-d_2}}\right) \right\rfloor
\end{pmatrix},
\]
where we set $M := 4\cdot 10^{2816}$ and $v := (0,0,0,0)$. Applying Lemma \ref{lem2.5g}, we obtain 
\[
 c_1 =  10^{-706} \quad \text{and} \quad c_2 =3.5\cdot 10^{705}. 
\]
Using Lemma \ref{lem2.6g}, we conclude that $S =3 \cdot 10^{1408}$ and $T = 2.1 \cdot 10^{704}$. Given that $c_2^2 \geq T^2 + S$, and selecting $c_3 := 8n<8\cdot 10^{352}$ and $c_4 := \log \alpha$, we establish the bound $\min\{n_1,n_2\} \leq 11790$. 
Thus, similar to how we obtained \eqref{eq:p-bound}, we get that 
\begin{align*}
	\log p \leq 11790,
\end{align*}
and so the bound in Lemma \ref{lem3.2g} gives 
\begin{align}\label{eq:n-b1}
	n \leq 10^{35}(11790)^6< 3\cdot 10^{59}.
\end{align}
Note that if $n_1:=\min\{n_1,d_2\}$, then $n_1\le 10102$ and we even deduce a smaller bound than in \eqref{eq:n-b1}, so we always have \eqref{eq:n-b1}.

We do this LLL-reduction process again using the improved bound on $n$. Again, we revisit \eqref{eq:lin-f1} with the absolute values of integer coefficients not exceeding $n^2 <  10^{119}$. Specifically, we consider the lattice  $\mathcal{A'}$ as before
with $M := 3\cdot 10^{357}$ and $y := (0,0,0)$. Applying Lemma \ref{lem2.5g} gives 
\[
 c_1 =  10^{-119} \quad \text{and} \quad c_2 =6.2\cdot 10^{120}. 
\]
Using Lemma \ref{lem2.6g}, we get that $S =2 \cdot 10^{238}$ and $T = 1.51 \cdot 10^{119}$. Choosing $c_3 := 12n<36\cdot 10^{59}$ and $c_4 := \log \alpha$, we obtain $\min\{d_1,d_2\} \leq 1422$.  

Again, without loss of generality, assume that $d_1\le d_2$, so that $d_1 \le 1422$. Then, we go to \eqref{eq:case1} and consider
$\mathcal{A^*} $ with
$M := 10^{476}$ and $v := (0,0,0,0)$. Lemma \ref{lem2.5g} gives $c_2 =6.7\cdot 10^{120}$ while
 Lemma \ref{lem2.6g} gives $S =2.43 \cdot 10^{238}$ and $T = 1.36 \cdot 10^{119}$. Choosing $c_3 := 10$ and $c_4 := \log \alpha$, we obtain $\min\{n_1,d_2\} \leq 1704$. In \eqref{eq:case2}, we use 
$
\mathcal{A^{**}} $,
$M := 10^{357}$ and $v := (0,0,0)$. Applying Lemma \ref{lem2.5g} gives $c_2 =4.92\cdot 10^{120}$ and Lemma \ref{lem2.6g} gives $S =2.43 \cdot 10^{238}$ and $T = 1.36 \cdot 10^{119}$ as before. Choosing $c_3 := 10$ and $c_4 := \log \alpha$, we obtain $\min\{n_1,d_2\} \leq 1135$. In all cases, $\min\{n_1,d_2\} \leq 1704$.

So, if $d_2:=\min\{n_1,d_2\}$, then for each $d_1\in [1,1422]$ and $d_2\in [d_1,1704]$, we use the linear form in \eqref{eq:ineq_a} and apply the LLL-algorithm to find a lower bound for the smallest nonzero value of this linear form bounded by integer coefficients with absolute values not exceeding $n^2 <  10^{119}$. We use the lattice $\mathcal{A''}$ with $M := 4\cdot 10^{476}$ and $y := (0,0,0,0)$. Applying Lemma \ref{lem2.5g}, we obtain 
\[
 c_1 =  10^{-117} \quad \text{and} \quad c_2 =2.2\cdot 10^{121}. 
\]
By Lemma \ref{lem2.6g}, we conclude that $S =3 \cdot 10^{238}$ and $T = 2.1 \cdot 10^{119}$. Since $c_2^2 \geq T^2 + S$, then choosing $c_3 := 8n<24\cdot 10^{59}$ and $c_4 := \log \alpha$, we get $\min\{n_1,n_2\} \leq 1988$. The case  $n_1:=\min\{n_1,d_2\}$ produces even a much smaller bound as explained in the first reduction before.

To proceed, assume without loss of generality that $n_1:=\min\{n_1,n_2\}<1988$ so that $n_2<3\cdot 10^{59}$ via \eqref{eq:n-b1}. We want to reduce this bound on $n_2$ further. To do this, we perform a computational search for Fibonacci sums that are products of two prime powers, and implement an optimized algorithm to find pairs of primes $(p, q)$ such that $F_{n_1} + F_{m_1} =  p^{x_1}q^{y_1}$ for $n_1 \le 1988$ and $0 \le m_1 \le n_1-2$. A brute-force factorization of every such sum would be computationally prohibitive due to the immense size of the numbers involved.

To circumvent this challenge, we developed a more efficient approach based on primality testing properties. The algorithm first iterates through all valid triples $(n_1, m_1, q)$, where $q$ is a prime less than 1000 but not 5. For each sum $F_{n_1} + F_{m_1}$ that is divisible by $q$, we compute the exponent $y_1 = \nu_q(F_{n_1} + F_{m_1})$ and define the number $B := (F_{n_1} + F_{m_1})/q^{y_1}$. To efficiently check if $B$ is a power of a single prime, we use a quick test based on Fermat's Little Theorem. We compute the greatest common divisor $C := \gcd(\text{mod}(2^B-2, B), B)$ using SageMath's highly optimized \texttt{power\_mod} command. This test quickly filters for candidates where $B$ is likely a prime power, as $C>1$ must hold in such cases. For each candidate, we then perform a definitive check by factoring $C$ and verifying that $B$ is indeed a power of the single prime factor of $C$.

This refined computational process, whose code is provided in Appendix \ref{app3}, identified a total of 5875 instances, confirming that for all found solutions, the exponent $x_1$ of $p$ was exactly 1. So, we now have the pairs $(p,q)$, where $p$ is given by $(F_{n_1}+F_{m_1})/q^{y_1}$ and we can do LLL with these pairs, to bound $n_2$ for any other potential solution.

We now utilize each of these pairs to significantly reduce the upper bound on $n_2$. For a given prime pair $(p,q)$, we consider the linear form in logarithms derived from \eqref{3.3g}, given by
\begin{align*}
	\left|m_2\log \alpha-x_2\log p-y_2\log q-\log \left(\dfrac{1+\alpha^{-d_2}}{\sqrt5}\right)\right|< \frac{3}{\alpha^{n_2}},
\end{align*}
with the absolute values of integer coefficients not exceeding $n_2 <  3\cdot 10^{59}$.
This linear form is then used to construct a three-dimensional approximation lattice $\mathcal{A}$ with a specific basis matrix and a target vector $v$, where we iterate through values of $d_2 \in [2, 1704]$. The LLL algorithm is applied to each of the 5875 pairs independently, a process often referred to as parallel computing. 
 We consider the approximation lattice
$$ \mathcal{A}=\begin{pmatrix}
	1 & 0  & 0 \\
	0 & 1 & 0 \\
	\lfloor M\log (1/p)\rfloor & \lfloor M\log (1/q)\rfloor& \lfloor M\log\alpha \rfloor
\end{pmatrix},$$
where we set $M := 8.1\cdot 10^{178}$ and $v :=\left(0,0,-\lfloor M\log \left(\sqrt5 /(1+\alpha^{-d_2})\right) \rfloor\right)$.
By running the algorithm for all pairs, we find the most restrictive lower bound on the lattice vector length, represented by the minimum value of $c_2 = 8.61 \cdot 10^{61}$ across all pairs and $d_2$ values. Using Lemma \ref{lem2.6g}, we have $S =2.7 \cdot 10^{119}$ and $T = 4.51 \cdot 10^{59}$. Choosing $c_3=3$ and $c_4=\log\alpha$, we obtain a uniform and much-improved upper bound, confirming that for any other potential solution, we must have $n_2 \le 562$.

To conclude this subsection, we use SageMath again to check whether these prime pairs have at least two representations of \eqref{eq:main} with each fixed $p^x q^y$. We do not find any further solutions than those listed in the main result.

\subsubsection{The case $q=5$}\label{subsec:3.4.2}

{\bf The case $m=0$ or $d$ is even}

\medskip

Experience suggests that the solutions with $m=0$ or $d=n-m$ even behave differently than the ones with $d$ odd, so we analyze them first. 
If $m=0$, we get 
\begin{align}\label{eq:Fm=0}
F_n=5^xp^x.
\end{align}
So, $5\mid n$. Write $n=5k$. If $k=5$, we get $F_{n}=F_{25}=5^2\cdot 3001$. So we have the pair $(p,q)=(3001,5)$. So we substitute these $p$, $q$ values in Lemma \ref{lem3.2g} and obtain
\begin{align}\label{eq:m=0}
	n< 1.1\cdot 10^{39},
\end{align}
so we again need to reduce this bound as we did in the case $2\le q<p\le 1000$, but now with $p=3001$ and $q=5$. We still use the LLL-reduction algorithm to reduce this bound on $n$.

Again, we go back to equation \eqref{3.1g} and assume $d\ge 5$, so that
\begin{align*}
	|\Lambda_1|=\left|n\log \alpha-x\log 3001-(y+1/2)\log 5\right|< \frac{6}{\alpha^{d}}.
\end{align*}
So, we consider the approximation lattice
$$ \mathcal{A}=\begin{pmatrix}
	1 & 0  & 0 \\
	0 & 1 & 0 \\
	\lfloor M\log (1/3001)\rfloor & \lfloor M\log (1/5)\rfloor& \lfloor M\log\alpha \rfloor
\end{pmatrix},$$
with $M:= 1.4\cdot 10^{117}$ and choose $v:=\left(0,0,0\right)$. Lemma \ref{lem2.5g} gives
$$c_1=1.7\cdot 10^{-39}\qquad \text{and}\qquad c_2=4.16\cdot 10^{40}.$$
On the other hand, inequalities \eqref{eq:2.4g} and Lemma \ref{lem3.2g} imply $x,\,y+1/2<n<1.11\cdot10^{39}$ so
\[
A_i:=1.11\cdot10^{39},\qquad \text{for}\quad i=1,2,3.
\]
Thus, Lemma \ref{lem2.6g} gives $S=3.64\cdot 10^{78}$ and $T=1.66\cdot 10^{39}$. Since $c_2^2\ge T^2+S$, then choosing $c_3:=6$ and $c_4:=\log\alpha$, we get $d\le 369$.

Next, we go to equation \eqref{3.3g} and write
\begin{align*}
	|\Lambda_2|=\left|m\log \alpha-x\log 3001-y\log 5-\log \left(\dfrac{1+\alpha^{-d}}{\sqrt5}\right)\right|< \frac{3}{\alpha^{n}},
\end{align*}
with the assumption that $n\ge 3$. As before, we consider the approximation lattice
$$ \mathcal{A}=\begin{pmatrix}
	1 & 0  & 0 \\
	0 & 1 & 0 \\
	\lfloor M\log (1/3001)\rfloor & \lfloor M\log (1/5)\rfloor& \lfloor M\log\alpha \rfloor
\end{pmatrix},$$
with $M:=  10^{118}$ and $v:=\left(0,0,-\lfloor M\log \left(\sqrt5 /(1+\alpha^{-d})\right) \rfloor\right)$ for each $d\in[0,369]$. So, by Lemma \ref{lem2.5g}, we get 
$$c_1=7.3\cdot 10^{-41}\qquad \text{and}\qquad c_2=1.43\cdot 10^{40}.$$
In the same way, Lemma \ref{lem2.6g} gives the same values of $S$ and $T$ as before, so choosing $c_3:=3$ and $c_4:=\log\alpha$, we get $n\le 375$, and this is for any solution 
$(n,m,x,y)$. So, as in the case $2\le q<p\le 1000$, we write a simple program in SageMath to check for the prime factorization $3001^x\cdot 5^y$ with $xy\ne 0$, written in two representations as $F_n+F_0=F_n$, with $2\le  n\le 375$. We find no solution to this case (namely, when $m=0$).

In fact, if $5\nmid k$, then $k\ge 13$ since $F_n$ is not divisible by any other prime less than $1000$ except for $5$. But then both 
$F_k$ and $F_{5k}$ have primitive divisors larger than $5$,  so indeed Equation \eqref{eq:Fm=0} is impossible. 

From now on, $m\ge 1$. Assume next that $d$ is even. If $d=2$, then 
$$
F_n+F_{n-2}=L_{n-1}=5^yp^x
$$
has no positive integer solutions with positive $y$ since the Lucas numbers are never multiples of $5$. For $d>2$, we write
$$
F_n+F_{n-d}=F_aL_b,\qquad {\text{\rm where}}\qquad \{a,b\}=\{n-d/2,d/2\}.
$$
In the above representation, $a$ and $b$ are coprime. Indeed, if not, then by Lemma \ref{lem:MD}, we would have by putting $c=\gcd(a,b)$, that $2c\mid a$ and $b/a$ is odd. Thus, 
$$
p^x5^y=F_aL_b=F_c L_c^2 \left(\frac{F_{a}}{F_{2d}}\right)\left(\frac{L_{b}}{L_c}\right).
$$
Further, $c\ge 5$ and $F_c$ and $L_c$ are coprime and $5\nmid L_c$, which shows that the only possibilities are $F_c=5^y,~L_c=p^x$. The primitive divisor theorem now leads to 
 the conclusion that $c=5$, so $p=11$, a case already treated. Thus, $a$ and $b$ are coprime and since $5\nmid L_b$, we must have $F_a=5^y,~L_b=p^x$. By the primitive divisor theorem again and Theorem \ref{thm:BMS}, we get $a=5$ and $x=1$.   Since $n\ge d$ and $p>1000$, we have $d=10$ and $b=n-5$. This shows that $(x,y)=(1,1)$, which makes this the smallest solution 
 $(n,m,x,y)=(n_1,n_1-5,1,1)$. Further, $p>1000$ which gives $n_1>17$. This gives
 $$
 \alpha^{n_1-5}+\beta^{n_1-5}=p,\qquad {\text{\rm so}}\qquad 1-\alpha^{-(n_1-5)}p=-\frac{\beta^{n_1-5}}{\alpha^{n_1-5}}=\pm \frac{1}{\alpha^{2(n_1-5)}},
 $$
 therefore
 \begin{equation}
 \label{eq:101}
 |(n_1-5)\log \alpha-\log p|<\frac{2}{\alpha^{2(n_1-5)}}.
 \end{equation}
 Let $(n_2,m_2,x_2,y_2)$ be a second solution. Clearly, $n_2>n_1$ since not both $x_2,~y_2$ can be $1$. Also, $d_2$ is odd is not equal to $3$ (otherwise $F_{n_2}+F_{n_2-3}$ is even), so $d_2\ge 5$. Thus, similar to \eqref{eq:datleast5}, we get
 \begin{equation}
 \label{eq:102}
 |n_2\log \alpha-x_2 \log p-(y_2+1/2)\log 5|<\frac{6}{\alpha^{d_2}}.
 \end{equation}
 We eliminate $\log p$ between \eqref{eq:101} and \eqref{eq:102} and get
\begin{align}\label{eq:t3}
 |\tau_3|&:=\left|(n_2-x_2(n_1-5))\log \alpha-(2y_2+1)\log \sqrt5\right|\nonumber\\
 &<\frac{2x_2}{\alpha^{2(n_1-5)}}+\frac{6}{\alpha^{d_2}}\le \frac{2n_2+6}{\alpha^{\min\{n_1,d_2\}}}\le \frac{3n_2}{\alpha^{\min\{n_1,d_2\}}}.
\end{align}
 In the above computation, we used that $n_2>n_1>17$, so $2n_2+6<3n_2$ and $2(n_1-5)>n_1$.  The linear form in the left-hand side of \eqref{eq:t3} is non-zero, so we can apply Theorem \ref{thm:Matg}.  
 
We use the field ${\mathbb K}$ of
degree $D := 2$ and for this case, $t := 2$,
\begin{alignat*}{2}
	\lambda_{1} &:= \alpha,         &\quad \lambda_{2} &:= \sqrt5,    \\
	b_1         &:= n_2-x_2(n_1-5),        &\quad b_2    &:= -(2y_2+1).
\end{alignat*} 
Notice that $\max\{|b_1|, |b_2|\} = n_2^2$ by \eqref{eq:2.4g}, so that we take $B:=n_2^2$. On the other hand,  
$$A_1 := Dh(\lambda_{1}) = 2\cdot\frac{1}{2} \log\alpha = \log \alpha,
\qquad\text{and}\qquad A_2 := Dh(\lambda_{2}) = 2\log \sqrt5.
$$ 
So, by Theorem \ref{thm:Matg},
\begin{align}\label{t3}
	\log |\tau_3| &> -1.4\cdot 30^5 \cdot 2^{4.5}\cdot 2^2 (1+\log 2)(1+\log n_2^2)(\log \alpha)(2\log \sqrt5)\nonumber\\
	&> -3\cdot 10^{10}\log n_2,
\end{align}
where we have used the fact $n_2\ge 2$. 
Comparing \eqref{eq:t3} and \eqref{t3}, we get
\begin{align*}
	\min\{n_1,d_2\} <6.5\cdot 10^{10}\log n_2.
\end{align*} 

If $n_1:=\min\{n_1,d_2\}$, we get $n_1< 6.5\cdot 10^{10}\log n_2$ and so
$$\log p<6.5\cdot 10^{10}\log n_2,$$ 
via a similar analysis used to obtain \eqref{eq:p-bound}. Therefore, Lemma \ref{lem3.2g} gives 
\begin{align*}
	n_2 < 10^{35}(\log p)^4(\log q)^2<10^{35}(6.5\cdot 10^{10}\log n_2)^6(\log 5)^2 < 2\cdot 10^{100}(\log n_2)^6.
\end{align*}
Applying Lemma \ref{Lem:Guz} to the above inequality with $ z:=n_2 $, $ s:=6 $ and $T:=2\cdot 10^{100}$, we get
 $$n_2<2^s T(\log T)^s = 2^{6} \cdot 2\cdot 10^{100}(\log 2\cdot 10^{100})^{6} < 10^{117},$$
 which gives an absolute bound on $n_1$ and $n_2$, since $n_1<n_2$.

 If $d_2:=\min\{n_1,d_2\}$, then we pass to the linear form 
\begin{equation}
\label{eq:103}
\left|n_2\log \alpha-x_2\log p-y_2\log 5+\log\left(\frac{1+\alpha^{-d_2}}{\sqrt{5}}\right)\right|<\frac{4}{\alpha^{n_2}}.
\end{equation}
We eliminate $\log p$ between \eqref{eq:101} and \eqref{eq:103} getting
\begin{align}\label{eq:t4}
|\tau_4|:=\left|(n_2-x_2(n_1-5))\log \alpha-y_2\log 5+\log\left(\frac{1+\alpha^{-d_2}}{\sqrt{5}}\right)\right|<\frac{4n_2+6}{\alpha^{n_1}}<\frac{5n_2}{\alpha^{n_1}}.
\end{align}
The left-hand side above is not zero. Indeed, if it were zero, then by taking norms and invoking Lemma \ref{lem:1} together with the fact that $d_2$ is odd, we would get that 
$L_{d_2}$ is a power of $5$, which is not the case. So, $\tau_4\ne 0$.

We still use the field ${\mathbb K}$ with $D = 2$, $t = 3$ and 
\begin{alignat*}{3} 
	\lambda_{1} &:= \alpha,         &\quad \lambda_{2} &:= 5,        &\quad \lambda_{3} &:= \dfrac{1+\alpha^{-d_2}}{\sqrt5}, \\
	b_1         &:= n_2-x_2(n_1-5),        &\quad b_2    &:= -y_2,             &\quad b_3    &:= 1.
\end{alignat*}
Moreover, we still take $B:=n_2^2$,
$A_1 :=\log \alpha$ and $A_2:=2\log 5$. For the case of $\lambda_{3}$, we compute
\begin{align*}
	Dh(\lambda_{3}) = 2h\left(\dfrac{1+\alpha^{-d_2}}{\sqrt5}\right) &\le 2\log\left(1+\alpha^{-d_2}\right) + 2\log\sqrt5\\
	&\le 2d_2\log\alpha+2\log\sqrt{5}+2\log 2\\
	&< 2\left(6.5\cdot 10^{10}\log n_2\right)\log\alpha+2\log\sqrt{5}+2\log 2\\
	&<6.5\cdot 10^{10}\log n_2.
\end{align*}
We thus define $A_3:=6.5\cdot 10^{10}\log n_2$. So, by Theorem \ref{thm:Matg}, we have 
\begin{align}\label{t4}
	\log |\tau_4| &> -1.4\cdot 30^6 \cdot 3^{4.5}\cdot 2^2 (1+\log 2)(1+\log n_2^2)(2\log 5)(\log \alpha)\cdot 6.5\cdot 10^{10}\log n_2\nonumber\\
	&> -3.5\cdot 10^{23} (\log n_2)^2,
\end{align}
where we have used the fact that $n_2> 2$. 
Comparing \eqref{eq:t4} and \eqref{t4}, we get
\begin{align*}
	n_1<8\cdot 10^{23} (\log n_2)^2.
\end{align*}
This means that 
$$\log p<8\cdot 10^{23} (\log n_2)^2,$$ 
via a similar analysis used to obtain \eqref{eq:p-bound}, and Lemma \ref{lem3.2g} gives
\begin{align*}
	n_2 < 10^{35}(\log p)^4(\log q)^2<10^{35}(8\cdot 10^{23} (\log n_2)^2)^4(\log 5)^2 < 7\cdot 10^{178}(\log n_2)^{12}.
\end{align*}
Applying Lemma \ref{Lem:Guz} to the above inequality with $ z:=n_2 $, $ s:=12 $ and $T:=7\cdot 10^{178}$, we get
$$n_2<2^s T(\log T)^s = 2^{12} \cdot 7\cdot 10^{178}(\log 7\cdot 10^{178})^{12} < 10^{214},$$
which also gives an absolute bound on $n_1$ and $n_2$, since $n_1<n_2$. In both cases depending on what $\min\{n_1,d_2\}$ is, we always have 
\begin{align*}
	n_1<n_2<10^{214}.
\end{align*}
We reduce this bound.

We go back to equation \eqref{eq:t3} and write
\begin{align*}
	\left|\dfrac{\log \sqrt5}{\log \alpha}-\dfrac{2(n_2-x_2(n_1-5))}{2y_2+1}\right|
	<  \frac{3n_2}{(2y_2+1)\alpha^{\min\{n_1,d_2\}}\log\alpha}.
\end{align*}
By Lemma \ref{lem:Legendre} with $\mu:=\dfrac{\log \sqrt5}{\log \alpha} $ and $M:=2.1\cdot 10^{214}>2y_2+1>0$, we have 
\begin{align*}
	\dfrac{1}{(a(M)+2)(2y_2+1)^2}<\left|\dfrac{\log \sqrt5}{\log \alpha}-\dfrac{2(n_2-x_2(n_1-5))}{2y_2+1} \right|<
	\frac{3n_2}{(2y_2+1)\alpha^{\min\{n_1,d_2\}}\log\alpha},
\end{align*}
where $a(M)=330$ (in fact, $q_{417}>2.1\cdot 10^{214}$ and $\max\{a_k: 0\le k\le 417\}=330$). The above inequality gives
\begin{align*}
	\dfrac{1}{(330+2)(2y_2+1)^2}&<\dfrac{3\cdot 10^{214}}{(2y_2+1)\alpha^{\min\{n_1,d_2\}}\log\alpha},
\end{align*}
so that 
\begin{align*}
\alpha^{\min\{n_1,d_2\}}&<\dfrac{3\cdot 10^{214}\cdot 332(2y_2+1)}{\log\alpha}<5\cdot 10^{431},
\end{align*}
where we have used the upper bound $2y_2+1< 2.1\cdot 10^{214}$. Taking logarithms of both sides and simplifying, we get $\min\{n_1,d_2\}<2066$.

If $d_2:=\min\{n_1,d_2\}$, then $d_2<2066$, and we go to \eqref{eq:t4} and consider the approximation lattice
$$ \mathcal{A}=\begin{pmatrix}
	1 & 0  & 0 \\
	0 & 1 & 0 \\
	\lfloor M\log \alpha\rfloor & \lfloor M\log (1/5)\rfloor& \left\lfloor M\log\left(\dfrac{1+\alpha^{-d_2}}{\sqrt5}\right)\right\rfloor
\end{pmatrix},$$
with $M:= 10^{1285}$. We choose $v:=\left(0,0,0\right)$. Lemma \ref{lem2.5g} gives $ c_2= 10^{430}$.
On the other hand, inequalities \eqref{eq:2.4g} and Lemma \ref{lem3.2g} imply $y_2,\,n_2-x_2(n_1-5)<n_2^2<10^{428}$, so
\[
A_i:=10^{428},\qquad \text{for}\quad i=1,2,3.
\]
Thus, Lemma \ref{lem2.6g} gives $S=3\cdot10^{856}$ and $T=1.51\cdot 10^{428}$. Since $c_2^2\ge T^2+S$, then choosing $c_3:=5n_2<10^{214}$ and $c_4:=\log\alpha$, we get $n_1\le 5116$. This means that $x_1\log p<n_1<5116$ from \eqref{eq:2.4g}, so that $\log p<5116$ because $x_1>0$.

If $n_1:=\min\{n_1,d_2\}$, then $n_1<2066$ and hence $\log p<2066$. Thus, in both cases depending on what $\min\{n_1,d_2\}$ is, we have $\log p<5116$. Therefore, Lemma \ref{lem3.2g} gives 
\begin{align}\label{n-and-p0}
	n < 10^{35}(\log p)^4(\log q)^2<10^{35}(5116)^4(\log 5)^2 < 2\cdot 10^{50}.
\end{align}
We shall come back to this bound later while concluding this subsection with $q=5$.

\medskip
\noindent{\bf The case $m>0$ and $d$ is odd}

From now on all $d$ are odd and all $m$ are positive. Since in case $m\in \{1,2\}$, we may assume that in fact is taken such that $n\equiv m\pmod 2$ and we have already dealt with such situation (namely, when $d=n-m$ is even), we can assume that $m\ge 3$. Since $d$ is odd and $d\ne 3$ (again otherwise $F_n+F_{n-d}$ is even for $d=3$), it follows that $d\ge 5$ so
\begin{align*}
\left|n\log \alpha-x\log p-(y+1/2)\log 5\right|<\frac{6}{\alpha^d}.
\end{align*}

We take two such inequalities corresponding to $(n_i,x_i,y_i)$ for $i=1,2$. If the matrix
\begin{equation}
	\label{eq:mat}
	\left(\begin{matrix}  n_1 & -x_1 & -(y_1+1/2)\\
		 n_2 & -x_2 & -(y_2+1/2)
		 \end{matrix}\right)
\end{equation}
has rank $2$ (so linearly independent rows), then we eliminate $\log p$ among the two small linear forms obtained from \eqref{3.1g} and get 
\begin{align}\label{eq:lin-f3}
	|\Lambda_5|:=\left|(n_1 x_2-n_2 x_1)\log\alpha - ((2y_1+1)x_2-(2y_2+1)x_1)\log \sqrt5\right| < \dfrac{12n}{\alpha^{\min\{d_1,d_2\}}},
\end{align}
where $n:=\max\{n_1,n_2\}$ and $d_i:=n_i-m_i$, for $i=1,2$. The linear form \eqref{eq:lin-f3} is nonzero since $\alpha$ and $\sqrt5$ are independent. In fact, if it were zero then
\begin{align*}
	\alpha^{n_1x_2-n_2x_1}= 5^{(y_1+1/2)x_2-(y_2+1/2)x_1}.	
\end{align*}
This is possible only if both exponents above are zero. Thus, $5^{(2y_1+1)x_2-(2y_2+1)x_1}=1$, therefore 
$$
\frac{x_1}{x_2}=\frac{2y_1+1}{2y_2+1},\qquad {\text{\rm and also}}\qquad  \frac{n_1}{n_2}=\frac{x_1}{x_2},
$$ 
so the matrix \eqref{eq:mat} has rank $1$, a contradiction. We use as always the field ${\mathbb K}$ of degree $D= 2$ with $t = 2$,
\begin{alignat*}{2}
	\lambda_{1} &:= \sqrt5,         &\quad \lambda_{2} &:= \alpha,     \\
	b_1         &:= - ((2y_1+1)x_2-(2y_2+1)x_1), &\quad b_2    &:= (n_1x_2-n_2x_1).
\end{alignat*}
We define $B:=2n^2$, $A_1 := 2\log \sqrt5$ and $A_2 := \log \alpha$. So, Theorem \ref{thm:Matg} gives
\begin{align}\label{eq:log-f3}
	\Lambda_5 &> -1.4\cdot 30^5 \cdot 2^{4.5}\cdot 2^2 (1+\log 2)(1+\log (2n^2))(2\log \sqrt5)(\log \alpha)\nonumber\\
	&> -3\cdot 10^{10}\log n,
\end{align}
where we have used $n\ge 10$. 
Comparing \eqref{eq:lin-f3} and \eqref{eq:log-f3}, we get
\begin{align*}
	\min\{d_1,d_2\} <  7\cdot 10^{10} \log n,
\end{align*}
which is even a smaller bound than obtained in the case when $q\ne 5$. Assume $\min\{d_1,d_2\}=d_1$. We then take the first equation \eqref{eq:group3}
\begin{equation}
\label{eq:****}
\left|n_1\log \alpha +\log\left( \frac{1+\alpha^{-d_1}}{\sqrt{5}}\right)-x_1\log p-y_1\log  5\right|<\frac{4}{\alpha^{n_1}},
\end{equation}
and 
$$
\left|n_2\log \alpha-x_2\log p-(y_2+1/2)\log 5\right|<\frac{6}{\alpha^{d_2}}.
$$
We multiply the first one by $x_2$ and the second one by $x_1$ and apply the absolute value inequality getting
\begin{align}
\label{eq:22}
|\Lambda_7|&:=\left|(n_1x_2-n_2x_1)\log \alpha-(2y_1x_2-(2y_2+1)x_1)\log \sqrt5+x_2\log\left((1+\alpha^{-d_1})/{\sqrt{5}}\right)\right|\nonumber\\
&<\frac{10n}{\alpha^{\min\{d_2,n_1\}}},
\end{align}
where $n:=\max\{n_1,n_2\}$.
It remains to see that the linear form in the left above is not zero. Note that the coefficient of the number $\log((1+\alpha^{-d_1})/{\sqrt{5}})$ is the positive integer $x_2$, 
so if the above form is zero, we then get that $(1+\alpha^{-d_1})/{\sqrt{5}},~\sqrt5$ and $\alpha$ are multiplicatively independent. 
Taking such a multiplicative relation  and applying norms we get that $N_{{\mathbb K}/{\mathbb Q}}((1+\alpha^{-d_1})/{\sqrt{5}})$ is a power of $5$, which is impossible since 
by Lemma \ref{lem:1}, $d_1\ge 5$ is odd, therefore the above norm is $-L_{d_1}/5$, and $L_{d_1}$ is coprime to $5$. 

To apply Theorem \ref{thm:Matg} to the left-hand side of \eqref{eq:22}, we use the field ${\mathbb K}$
of degree $D = 2$, $t = 3$, and the data
\begin{alignat*}{3}
	\lambda_{1} &:= \sqrt5,         &\quad \lambda_{2} &:= \alpha,     &\quad \lambda_{3} &:= \dfrac{1+\alpha^{-d_1}}{\sqrt{5}}, \\
	b_1         &:= -(2y_1x_2-(2y_2+1)x_1), &\quad b_2    &:= n_1x_2-n_2x_1, &\quad b_3    &:= x_2.
\end{alignat*}
Again, $B:=2n^2$, $A_1 := 2\log \sqrt5$ and $A_2 := \log \alpha$.  For the case of $\lambda_{3}$, we compute
\begin{align*}
	Dh(\lambda_{3}) = 2h\left(\dfrac{1+\alpha^{-d_1}}{\sqrt5}\right) &\le 2\log\left(1+\alpha^{-d_1}\right) + 2\log\sqrt5\\
	&\le 2d_1\log\alpha+2\log\sqrt{5}+2\log 2\\
	&< 2\left(7\cdot 10^{10}\log n\right)\log\alpha+2\log\sqrt{5}+2\log 2\\
	&<6.9\cdot 10^{10}\log n.
\end{align*}
We thus define $A_3:=6.9\cdot 10^{10}\log n$.
Therefore, by Theorem \ref{thm:Matg},
\begin{align}\label{eq:log-f7}
	\Lambda_7 &> -1.4\cdot 30^6 \cdot 3^{4.5}\cdot 2^2 (1+\log 2)(1+\log (2n^2))(2\log \sqrt5)(\log \alpha)\cdot 6.9\cdot 10^{10}\log n \nonumber\\
	&> -4\cdot 10^{23}(\log n)^2,
\end{align}
where we have used $n\ge 10$. 
Comparing \eqref{eq:22} and \eqref{eq:log-f7}, we get
\begin{align*}
	\min\{n_1,d_2\} <  9\cdot 10^{23} (\log n)^2.
\end{align*}

If $n_1:=\min\{n_1,d_2\}$, we get $n_1< 9\cdot 10^{23} (\log n)^2$ and so
$$\log p<9\cdot 10^{23} (\log n)^2,$$ 
via a similar analysis used to obtain \eqref{eq:p-bound}. Therefore, Lemma \ref{lem3.2g} gives 
\begin{align*}
	n < 10^{35}(\log p)^4(\log q)^2<10^{35}(9\cdot 10^{23} (\log n)^2)^6(\log 5)^2 < 2\cdot 10^{179}(\log n)^{12}.
\end{align*}
Applying Lemma \ref{Lem:Guz} to the above inequality with $ z:=n $, $ s:=12 $ and $T:=2\cdot 10^{179}$, we get
$$n<2^s T(\log T)^s = 2^{12} \cdot 2\cdot 10^{179}(\log 2\cdot 10^{179})^{12} < 10^{215}.$$

If $d_2:=\min\{n_1,d_2\}$, then $d_2< 9\cdot 10^{23} (\log n)^2$. We return again to the expression \eqref{eq:ineq_a}, namely
\begin{align}\label{eq:ineq_aa}
	|\Lambda_4|&:=\left|(n_1x_2-n_2x_1)\log \alpha-(y_1x_2-x_1y_2)\log 5+x_2\log\left(\frac{1+\alpha^{-d_1}}{\sqrt{5}}\right)-x_1\left(\frac{1+\alpha^{-d_2}}{{\sqrt{5}}}\right)\right|\nonumber\\
	&< \frac{8n}{\alpha^{\min\{n_1,n_2\}}}.
\end{align}
The left-hand side above is nonzero. Indeed, if $d_1\ne d_2$, this is nonzero by Lemma \ref{lem:1} because $d_1,~d_2$ are both odd and $L_{d_1}$ and $L_{d_2}$ have primitive prime factors which are not $5$. If $d_1=d_2=d$, the above form is again nonzero for the same reason except when $x_2=x_1$ since the coefficient of 
$\log((1+\alpha^{-d})/{\sqrt{5}})$ above is $x_2-x_1$. But in  this case the linear form becomes 
$$
|x_1(n_2-n_1)\log \alpha-x_1(y_1-y_2)\log {\sqrt{5}}|,
$$
and this is not zero except for $y_1=y_2$ and $n_1=n_2$. But this leads to $m_1=m_2$, so $(n_1,m_1,x_1,y_1)=(n_2,m_2,x_2,y_2)$, which is not the case. 
So, we can apply again Matveev's theorem and bound $\min\{n_1,n_2\}$ in \eqref{eq:ineq_aa}. Similar to how we obtained \eqref{eq:log-f2}, the only difference here is $q=5$, $A_3:=6.9\cdot 10^{10}\log n$ and
\begin{align*}
	Dh(\lambda_{4}) = 2h\left(\dfrac{1+\alpha^{-d_2}}{\sqrt5}\right) &\le 2\log\left(1+\alpha^{-d_2}\right) + 2\log\sqrt5\\
	&\le 2d_2\log\alpha+2\log\sqrt{5}+2\log 2\\
	&< 2\left(9\cdot 10^{23}(\log n)^2\right)\log\alpha+2\log\sqrt{5}+2\log 2\\
	&<8.8\cdot 10^{23}(\log n)^2,
\end{align*}
so $A_4:=8.8\cdot 10^{23}(\log n)^2$. Thus, Theorem \ref{thm:Matg} gives
\begin{align}\label{eq:log-f2a}
	\Lambda_4 &> -1.4\cdot 30^7 \cdot 4^{4.5}\cdot 2^2 (1+\log 2)(1+\log n^2)(2\log 5)(\log \alpha)\cdot6.9\cdot 10^{10}\log n\cdot  8.8\cdot 10^{23}(\log n)^2\nonumber\\
	&> -3\cdot 10^{49}(\log n)^4,
\end{align}
where we have used $n\ge 10$. Comparing \eqref{eq:ineq_aa} and \eqref{eq:log-f2a}, we get
\begin{align*}
	\min\{n_1,n_2\} < 6.5\cdot 10^{49}(\log n)^4.
\end{align*}
Depending on whether the minimum above is $n_1$ or $n_2$, we always have from \eqref{eq:2.4g} that
\begin{align*}
	x_1\log p <n_1\qquad\text{and}\qquad x_2\log p< n_2.
\end{align*}
Since $x_1$, $x_2>1$,  we have that $\log p<\min\{n_1,n_2\}<6.5\cdot 10^{49}(\log n)^4$. Therefore, Lemma \ref{lem3.2g} gives 
\begin{align*}
	n < 10^{35}(\log p)^4(\log q)^2<10^{35}(6.5\cdot 10^{49}(\log n)^4)^6(\log 5)^2 < 2\cdot 10^{334}(\log n)^{24}.
\end{align*}
Applying Lemma \ref{Lem:Guz} to the above inequality with $ z:=n $, $ s:=24 $ and $T:=2\cdot 10^{334}$, we get
$$n<2^s T(\log T)^s = 2^{24} \cdot 2\cdot 10^{334}(\log 2\cdot 10^{334})^{24} < 10^{411}.$$

Thus, in all cases from \eqref{eq:22}, we always have 
\begin{align*}
	n<10^{411}.
\end{align*}
We reduce this bound.

We go back to equation \eqref{eq:lin-f3} and write
\begin{align*}
	\left|\dfrac{\log \sqrt5}{\log \alpha}-\dfrac{n_1 x_2-n_2 x_1}{(2y_1+1)x_2-(2y_2+1)x_1}\right|
	<  \frac{12n}{((2y_1+1)x_2-(2y_2+1)x_1)\alpha^{\min\{d_1,d_2\}}\log\alpha}.
\end{align*}
By Lemma \ref{lem:Legendre} with $\mu:=\dfrac{\log \sqrt5}{\log \alpha} $ and $M:=2.1\cdot 10^{822}>2n^2>(2y_1+1)x_2-(2y_2+1)x_1\ne0$ (since we already explained that the matrix \eqref{eq:mat} has rank $2$), we have 
\begin{align*}
	\dfrac{1}{(a(M)+2)((2y_1+1)x_2-(2y_2+1)x_1)^2}&<\left|\dfrac{\log \sqrt5}{\log \alpha}-\dfrac{n_1 x_2-n_2 x_1}{(2y_1+1)x_2-(2y_2+1)x_1}\right|\\
	&<
	\frac{12n}{((2y_1+1)x_2-(2y_2+1)x_1)\alpha^{\min\{d_1,d_2\}}\log\alpha},
\end{align*}
where $a(M)=3435$ (in fact, $q_{1612}>2.1\cdot 10^{822}$ and $\max\{a_k: 0\le k\le 1612\}=3435$). The above inequality gives
\begin{align*}
	\dfrac{1}{(3435+2)((2y_1+1)x_2-(2y_2+1)x_1)^2}&<\dfrac{12\cdot 10^{411}}{((2y_1+1)x_2-(2y_2+1)x_1)\alpha^{\min\{d_1,d_2\}}\log\alpha},
\end{align*}
so that 
\begin{align*}
	\alpha^{\min\{d_1,d_2\}}&<\dfrac{12\cdot 10^{411}\cdot 3437((2y_1+1)x_2-(2y_2+1)x_1)}{\log\alpha}< 10^{1239}.
\end{align*}
Taking logarithms on both sides and simplifying, we get $\min\{d_1,d_2\}<5929$.

Without loss of generality, assume as before that $d_1:=\min\{d_1,d_2\}$. Then $d_1<5929$, so we go to \eqref{eq:22} and consider the approximation lattice
$$\begin{pmatrix}
	1 & 0  & 0 \\
	0 & 1 & 0 \\
	\lfloor M\log \alpha\rfloor & \lfloor M\log (1/5)\rfloor& \left\lfloor M\log\left(\dfrac{1+\alpha^{-d_1}}{\sqrt5}\right)\right\rfloor
\end{pmatrix},$$
with $M:= 10^{2467}$ and choose $v:=\left(0,0,0\right)$. So, Lemma \ref{lem2.5g} gives $ c_2= 10^{825}$.
On the other hand, inequalities \eqref{eq:2.4g} and Lemma \ref{lem3.2g} imply the coefficients of the linear form in \eqref{eq:22} are bounded by $2n^2<2\cdot 10^{822}$ so
\[
A_i:=2\cdot10^{822},\qquad \text{for}\quad i=1,2,3.
\]
Thus, Lemma \ref{lem2.6g} gives $S=1.2\cdot10^{1645}$ and $T=3.1\cdot 10^{822}$. Since $c_2^2\ge T^2+S$, then choosing $c_3:=10n<10^{412}$ and $c_4:=\log\alpha$, we get $\min\{n_1,d_2\}\le 9828$.

If $n_1:=\min\{n_1,d_2\}$, we get $n_1< 9828$ and so
$\log p<9828$. Therefore, Lemma \ref{lem3.2g} gives 
\begin{align*}
	n < 10^{35}(\log p)^4(\log q)^2<10^{35}(9828)^4(\log 5)^2 < 3\cdot 10^{51}.
\end{align*}

If $d_2:=\min\{n_1,d_2\}$, then $d_2< 9828$, and we go to \eqref{eq:ineq_aa} and consider the approximation lattice
\[
 \begin{pmatrix} 
	1 & 0 & 0 & 0\\ 
	0 & 1 & 0 & 0\\ 
	0 & 0 & 1 & 0\\ 
	\lfloor M\log \alpha\rfloor & \lfloor M\log (1/5)\rfloor & \left\lfloor M\log \left(\frac{1+\alpha^{-d_1}}{\sqrt{5}}\right) \right\rfloor& \left\lfloor M\log \left(\frac{\sqrt{5}}{1+\alpha^{-d_2}}\right) \right\rfloor
\end{pmatrix},
\]
with $M:= 10^{3289}$. We choose $v:=\left(0,0,0,0\right)$. Lemma \ref{lem2.5g} gives $ c_2= 10^{827}$.
On the other hand, inequalities \eqref{eq:2.4g} and Lemma \ref{lem3.2g} imply the coefficients of the linear form in \eqref{eq:ineq_aa} are bounded by $n^2<10^{822}$, therefore
\[
A_i:=10^{822},\qquad \text{for}\quad i=1,2,3,4.
\]
Thus, Lemma \ref{lem2.6g} gives $S=4\cdot10^{1644}$ and $T=2.1\cdot 10^{822}$. Since $c_2^2\ge T^2+S$, then choosing $c_3:=8n<10^{412}$ and $c_4:=\log\alpha$, we get $\min\{n_1,n_2\}\le 13752$. So, depending on whether this minimum is $n_1$ or $n_2$, we shall always have $\log p\le 13752$, as explained before. Thus, Lemma \ref{lem3.2g} gives 
\begin{align*}
	n < 10^{35}(\log p)^4(\log q)^2<10^{35}(13752)^4(\log 5)^2 < 9.3\cdot 10^{51}.
\end{align*}

Hence, in all cases,
\begin{align}\label{n-and-p1}
\log p\le 13752\qquad\text{and}\qquad	n < 9.3\cdot 10^{51}.
\end{align}

The trouble is when the rows in \eqref{eq:mat} are linearly dependent. This is something that has not appeared before so let us treat it. Write 
\begin{align*}
(n_i,-x_i,-(y_i+1/2))=\lambda_i(a,b,c),\qquad i=1,2,
\end{align*}
where $a,b,c$ are some fixed positive integers with $\gcd(a,b,c)=1$. Note that $c$ is odd. So, $\lambda_i$ are half integers of numerators $\le 2n_i$ for $i=1,2$. 
We now rewrite \eqref{eq:main} for $(n_i,x_i,y_i)=(n,x,y)$ as 
\begin{align*}
\frac{\alpha^n}{\sqrt{5}}-p^x5^y=\frac{\beta^n}{{\sqrt{5}}}-\frac{\alpha^m-\beta^m}{\sqrt{5}}.
\end{align*}
Dividing through by $\alpha^n/\sqrt5$, we get
\begin{align*}
1-\alpha^{-n}p^x 5^{y+1/2}=\frac{\beta^n}{\alpha^n}-\frac{\alpha^m-\beta^m}{\alpha^n}.
\end{align*}
Let 
\begin{align*}
z:=1-\alpha^{-n}p^x 5^{y+1/2}=-\frac{1}{\alpha^{n-m}}+\frac{\beta^m}{\alpha^n}+\frac{1}{\alpha^{2n}}=-\frac{1}{\alpha^{d}}+\frac{(-1)^m}{\alpha^{d+2m}}+\frac{\beta^n}{\alpha^n}=-\frac{1}{\alpha^{d}}+\mathcal{O}_2\left(\frac{1}{\alpha^{n+m}}\right).
\end{align*}
The constant inside the $\mathcal{O}_2$ symbol above can be taken to be $2$. Note that since $d\ge 5$, $n\ge 17$ (because $F_n+F_m=p^x5^y$ with $y\ge 1$, $x\ge 1$ and $p>1000$), we have that $|z|<0.1$, so 
$1/(1-|z|)<10/9$. Note also that $|z|<3/\alpha^d$. Putting 
\begin{align*}
\Lambda_6 :=n\log \alpha-x\log p-(y+1/2)\log 5,
\end{align*}
we get that 
\begin{align*}
\Lambda_6 & =-\log(1-z)=z+\mathcal{O}_{1}\left(\frac{z^2}{2}\left(1+|z|+\cdots\right)\right)=z+\mathcal{O}_1\left(\frac{z^2}{2(1-|z|)}\right)\\
& =-\frac{1}{\alpha^d}+\frac{(-1)^m}{\alpha^{2m+d}}+\mathcal{O}_1\left(\frac{1}{\alpha^{m+n}}\right)+\mathcal{O}_5\left(\frac{1}{\alpha^{2d}}\right)=-\frac{1}{\alpha^d}+\frac{(-1)^m}{\alpha^{2m+d}}+\mathcal{O}_6\left(\frac{1}{\alpha^{\min\{2d,m+n\}}}\right)\\
& =-\frac{1}{\alpha^d}+\mathcal{O}_7\left(\frac{1}{\alpha^{\min\{2m+d,2d,m+n\}}}\right).
\end{align*}
The numbers $1$, $5$, $6$ and $7$ above represent the size of the implied constants. We now give values $z=z_i$ when $(n,x,y)=(n_i,x_i,y_i)$ for $i=1,2$  and divide by $\lambda_i$, which are 
$\ge 1/2$ getting
\begin{align*}
a\log \alpha-b\log p-c\log 5=\frac{1}{\lambda_i }\left(-\frac{1}{\alpha^{d_i}}+\frac{(-1)^{m_i}}{\alpha^{2m_i+d_i}}\right)+\mathcal{O}_{12} \left(\frac{1}{\alpha^{\min\{2d_i,n_i+m_i\}}}\right),\qquad i=1,2.
\end{align*}
We thus get 
\begin{align*}
\left| \frac{1}{\lambda_1\alpha^{d_1}}-\frac{1}{\lambda_2\alpha^{d_2}}\right|<\frac{28}{\alpha^{\min\{2d_1,2d_2,n_1+m_1,n_2+m_2\}}}.
\end{align*}
Assuming $d_1\le d_2$, we get 
\begin{equation}
	\label{eq:lbd}
	\left|\frac{1}{\lambda_1}-\frac{1}{\lambda_2\alpha^{d_2-d_1}}\right|<\frac{28}{\alpha^{\min\{d_1,2d_2-d_1,2m_1,2m_2+d_2-d_1\}}}.
\end{equation}
Note that the left-hand side above is not zero. Indeed, it can only be zero first if $d_1=d_2$ (since $\alpha^{d_2-d_1}$ must equal the rational number $\lambda_1/\lambda_2$), and later if $\lambda_1=\lambda_2$ but then $(n_1,x_1,y_1+1/2)=(n_2,x_2,y_2+1/2)$, which is false. 
We now incorporate $1/\lambda_2\alpha^{d_2-d_1}$ in the right-hand side getting
\begin{align*}
\frac{1}{n_1}\le \frac{1}{\lambda_1}\le \frac{30}{\alpha^{\min\{d_2-d_1,d_1,2m_1,2m_2+d_2-d_1\}}}.
\end{align*}
So 
\begin{align*}
\min\{d_2-d_1,d_1,2m_1,2m_2+d_2-d_1\}&\le \frac{\log(30n)}{\log \alpha}
<7.1+2.1\log n\\
&\le \left(2.1+\frac{7.1}{\log n}\right) \log n<5\log n.
\end{align*}
Here, $n=\max\{n_1,n_2\}$ and we used the fact that $n\ge 17$. 
So, either 
\begin{align*}
	d_1\le 5\log n,\qquad {\text{\rm or}}\qquad d_2-d_1\le 5\log n,\qquad {\text{\rm or}}\qquad \min\{2m_1,2m_2+d_2-d_1\}\le 5\log n.
\end{align*}
The first condition is what we wanted. If it holds, we can proceed as we did before by moving to inequality \eqref{eq:****}. 
Let us assume that the second condition holds. We return to \eqref{eq:lbd} and multiply it by 
\begin{align*}
\left|\frac{1}{\lambda_1}-\frac{1}{\lambda_2\beta^{d_2-d_1}}\right|\le 2+2\alpha^{d_2-d_1}\le 4\alpha^{d_2-d_1},
\end{align*}
getting 
\begin{align*}
\left|\frac{1}{\lambda_1}-\frac{1}{\lambda_2\alpha^{d_2-d_1}}\right|\left|\frac{1}{\lambda_1}-\frac{1}{\lambda_2\beta^{d_2-d_1}}\right| \le \frac{112\alpha^{d_2-d_1}}{\alpha^{\min\{d_1,d_2,2m_1,2m_2+d_2-d_1\}}}.
\end{align*}
The left-hand side is a rational number which is nonzero and at least $1/({\text{\rm num}}(\lambda_1){\text{\rm num}}(\lambda_2))^2\ge 1/(16n^4)$. We thus get that 
\begin{align*}
\min\{d_1,d_2,2m_1,2m_2+d_2-d_1\} & \le \frac{\log(16\cdot 112 n^4)}{\log \alpha}+d_2-d_1<15.6+8.4\log n+5\log n\\
& <\left(13.4+\frac{15.6}{\log n}\right)\log n<19\log n. 
\end{align*}
If this is $\min\{d_1,d_2\}$, then again we proceed to \eqref{eq:****}. So, assume that
\begin{align*}
\min\{2m_1,2m_2+d_2-d_1\}\le 19\log n.
\end{align*}
But this was under the assumption that $d_2-d_1\le 5\log n$. If this doesn't hold, then the last condition holds namely
\begin{align*}
\min\{2m_1,2m_2+d_2-d_1\}\le 5\log n.
\end{align*}
If the minimum is in $2m_2+d_2-d_1$, then both $d_2-d_1<5\log n$ and $2m_2<5\log n$ hold. Finally, let us assume that 
\begin{align*}
2m_1<5\log n.
\end{align*}
We then incorporate this into \eqref{eq:lbd} using the more precise formula for $z:=z_1$ getting
\begin{align*}
\left|\frac{1}{\lambda_1}\left(-1+\frac{(-1)^{m_1}}{\alpha^{2m_1}}\right)\right|\le \frac{26}{\alpha^{\min\{d_2-d_1,2m_1,2m_2+d_2-d_1\}}}.
\end{align*}
The left-hand side is nonzero since $m_1\ne 0$. Indeed, it could be zero only if $\alpha^{2m_1}\in {\mathbb Q}$, giving $m_1=0$, which is not allowed. We multiply both sides above by the nonzero number 
\begin{align*}
\left|1+\frac{(-1)^{m_1}}{\beta^{2m_1}}\right|\le 1+\alpha^{2m_1}\le 2\alpha^{2m_1},
\end{align*}
getting 
\begin{align*}
\frac{1}{\lambda_1} \left|\left(1+\frac{(-1)^{m_1}}{\alpha^{2m_1}}\right)\left(1+\frac{(-1)^{m_1}}{\beta^{2m_1}}\right)\right|\le \frac{52\alpha^{2m_1}}{\alpha^{d_2-d_1}}.
\end{align*}
Since $1/\lambda_1\ge 1/2n_1$ and the factor multiplying it in the left-hand side above is an integer, we get that
\begin{align*}
d_2-d_1\le \frac{\log(52\alpha^{2m_1})}{\log \alpha}\le 9+2m_1\le 9+6\log n<10\log n.
\end{align*}
So, in all cases 
\begin{align*}
d_2-d_1<10\log n,\qquad \min\{2m_1,2m_2\}\le 19\log n.
\end{align*}
We take this one step further. Assume $2m_1<2m_2+d_2-d_1$. Incorporating $(-1)^{m_1}/\alpha^{2m_1}$ into the left-hand side of our approximations we get
\begin{equation}
\label{eq:110}
\left|\frac{1}{\lambda_1}\left(\frac{1}{\alpha^{d_1}}-\frac{(-1)^{m_1}}{\alpha^{d_1+2m_1}}\right)-\frac{1}{\lambda_2\alpha^{d_2}}\right|\le \frac{26}{\alpha^{\min\{2d_1,2d_2,2m_2+d_2-d_1\}}}.
\end{equation}
This gives 
\begin{align*}
\left|\frac{1}{\lambda_1}\left(1-\frac{(-1)^{m_1}}{\alpha^{2m_1}}\right)-\frac{1}{\lambda_2\alpha^{d_2-d_1}}\right|\le \frac{26}{\alpha^{\min\{d_1,d_2,2m_2+d_2-d_1\}}}.
\end{align*}
We show that the left-hand side is nonzero. Indeed, assume it is zero. Then 
\begin{align*}
(1-(-1)^{m_1}\alpha^{-2m_1})=\pm \frac{\lambda_1}{\lambda_2} \alpha^{d_1-d_2}.
\end{align*}
We take norms and use the fact that $d_1-d_2$ is even (since both $d_1$ and $d_2$ are odd), to get that
\begin{align*}
(1-(-1)^{m_1}\alpha^{-2m_1})(1-(-1)^{m_1}\beta^{2m_1})=\left(\frac{\lambda_1}{\lambda_2}\right)^2 (\alpha \beta)^{d_1-d_2} =\left(\frac{\lambda_1}{\lambda_2}\right)^2.
\end{align*}
The left-hand side is $2-(-1)^{m_1}L_{-2m_1}=2-(-1)^{m_1}L_{2m_1}$. Since $m_1\ge 3$, $L_{2m_1}>2$ so since the right-hand side above is positive, we must have that $m_1$ is odd. Thus,
\begin{align*}
\left(\frac{\lambda_1}{\lambda_2}\right)^2=L_{2m_1}+2=5F_{m_1}^2,
\end{align*}
a contradiction. This shows that the left-hand side in \eqref{eq:110} is nonzero. We now multiply the inequality \eqref{eq:110} with 
\begin{align*}
\left|\frac{1}{\lambda_1}\left(1-\frac{(-1)^{m_1}}{\beta^{2m_1}}\right)-\frac{1}{\lambda_2\beta^{d_2-d_1}}\right|.
\end{align*}
The size of this last number is at most
\begin{align*}
\le 2(\alpha^{2m_1}+1)+2\alpha^{d_2-d_1}\le 5\alpha^{\max\{2m_1,d_2-d_1\}}.
\end{align*}
We get 
\begin{align*}
\left|\frac{1}{\lambda_1}\left(\frac{1}{\alpha^{d_1}}-\frac{(-1)^{m_1}}{\alpha^{d_1+2m_1}}\right)-\frac{1}{\lambda_2\alpha^{d_2}}\right|\left|\frac{1}{\lambda_1}\left(1-\frac{(-1)^{m_1}}{\beta^{2m_1}}\right)-\frac{1}{\lambda_2\beta^{d_2-d_1}}\right|\le \frac{130\alpha^{\max\{2m_1,d_2-d_1\}}}{\alpha^{\min\{d_1,d_2,2m_2+d_2-d_1\}}}.
\end{align*}
The left-hand side above is nonzero and at least $\ge 1/(16n^4)$. We thus get
\begin{align*}
\min\{d_1,d_2,2m_2+d_2-d_1\}&\le \frac{\log(16\cdot 130 n^4)}{\log \alpha}+\max\{2m_1,d_2-d_1\}\\
&\le 16+8.4\log n+19\log n<34\log n.
\end{align*}
A similar inequality is obtained for $\min\{d_1,d_2,2m_1\}$ assuming that $2m_2+(d_2-d_1)<19\log n$ (we just incorporate $1/\alpha^{2m_2+d_2}$ into the right-hand side 
of our estimates). 

If 
\begin{align*}
\min\{d_1,d_2\}<34\log n,
\end{align*}
then we return to \eqref{eq:22} and bound $\min\{d_2,n_1\}$ and later $n_1$ and also $n_2$. Doing this still showed that the bound in \eqref{n-and-p1} still held. So, the only case left is the following.
Let us summarize our findings in Lemma \ref{lem:lem3.3}.  
\begin{lemma}
\label{lem:lem3.3}
	Assume that $q=5,~p>1000$, $(n_1,x_1,y_1),~(n_2,x_2,y_2)$ are  two solutions such that matrix \eqref{eq:mat} has rank $1$, $d_1$ and $d_2$ are odd, $m_1\ge 3,~m_2\ge 3$ and $\min\{d_1,d_2\}\ge 34\log n$. We then have
	\begin{align*}
	d_2-d_1\le 10\log n,\qquad \min\{m_1,m_2\}\le 10\log n,\qquad \max\{m_1,m_2\}<17\log n.
\end{align*}
\end{lemma}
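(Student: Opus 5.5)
The plan is to assemble the lemma from the chain of elementary (non-Baker) estimates made just before the statement. Everything comes from the expansion
\[
\Lambda_6=-\frac{1}{\alpha^{d}}+\frac{(-1)^m}{\alpha^{2m+d}}+\mathcal{O}_6\!\left(\frac{1}{\alpha^{\min\{2d,m+n\}}}\right),
\]
applied to each solution and divided by the common proportionality factor $\lambda_i$ coming from the rank-one condition on \eqref{eq:mat}. Subtracting the two resulting copies of $a\log\alpha-b\log p-c\log 5$ gives \eqref{eq:lbd}. Throughout I use that $1/\lambda_i\ge 1/(2n)$, since $\lambda_i$ is a half integer with numerator at most $2n$, and that $n\ge 17$. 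The hypothesis $\min\{d_1,d_2\}\ge 34\log n$ is used to discard every branch of the case analysis that ends with ``$\min\{d_1,d_2\}<34\log n$'' (or the weaker $19\log n$, $5\log n$). The lemma is then whatever survives.

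\textbf{First: the bound on $d_2-d_1$.} From \eqref{eq:lbd} I first get that $\min\{d_2-d_1,d_1,2m_1,2m_2+d_2-d_1\}<5\log n$.
\begin{enumerate}[(i)]
\item If the minimum is $d_1$, the hypothesis is contradicted.
\item If the minimum is $d_2-d_1$, we already have $d_2-d_1\le 10\log n$.
\item If the minimum is $2m_2+d_2-d_1$, we again get $d_2-d_1<5\log n$.
\item If the minimum is $2m_1$, I use the refined form of $z_1$. The conjugation/norm trick (multiply by the algebraic conjugate, so that the product is a nonzero rational with denominator at most $16n^4$, or use the integrality of $(1\pm\alpha^{-2m_1})(1\pm\beta^{-2m_1})$) yields $d_2-d_1\le 9+2m_1<10\log n$.
\end{enumerate}
In every surviving branch, $d_2-d_1\le 10\log n$.

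\textbf{Second: the bound on $\min\{m_1,m_2\}$.} With $d_2-d_1$ now small, I multiply \eqref{eq:lbd} by its conjugate counterpart, whose size is at most $4\alpha^{d_2-d_1}$. Since the product is a nonzero rational of size at least $1/(16n^4)$, this gives
\[
\min\{d_1,d_2,2m_1,2m_2+d_2-d_1\}<19\log n .
\]
By the hypothesis the minimum is not $d_1$ or $d_2$. So either $2m_1<19\log n$ or $2m_2<19\log n$, which means $\min\{m_1,m_2\}\le 10\log n$. The combination of the $5\log n$ and $19\log n$ branches gives the same conclusion.

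\textbf{Third: the bound on $\max\{m_1,m_2\}$, which I expect to be the main obstacle.} Suppose, say, $2m_1<2m_2+d_2-d_1$; the other case is symmetric, incorporating the $\alpha^{-2m_2-d_2}$ term instead.
\begin{enumerate}[(i)]
\item Move the $(-1)^{m_1}\alpha^{-2m_1-d_1}$ term to the left, giving \eqref{eq:110}.
\item Check that the new left-hand side is nonzero. Taking norms and using that $d_1-d_2$ is even forces $(\lambda_1/\lambda_2)^2=5F_{m_1}^2$, which is irrational, a contradiction. This nonvanishing check is where I expect the real difficulty to lie.
\item Multiply by the conjugate expression, which is bounded by $5\alpha^{\max\{2m_1,d_2-d_1\}}$. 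This gives
\[
\min\{d_1,d_2,2m_2+d_2-d_1\}\le \frac{\log(2080n^4)}{\log\alpha}+\max\{2m_1,d_2-d_1\}<34\log n .
\]
\end{enumerate}
By the hypothesis the minimum must be $2m_2+d_2-d_1$, so $2m_2<34\log n$. Together with $m_1\le 10\log n$, this gives $\max\{m_1,m_2\}<17\log n$.

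The remaining delicate points are bookkeeping. The numerical constants ($16$, $8.4\log n$, $19\log n$) must add up to under $34\log n$, using $n\ge17$. The case split must also be exhaustive, i.e. it must cover both orderings of $2m_1$ versus $2m_2+d_2-d_1$.
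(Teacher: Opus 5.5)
Your proposal is correct and follows essentially the same route as the paper. The paper's proof of this lemma is the elementary chain starting from \eqref{eq:lbd}: the $5\log n$ case split, the conjugate/norm step giving the $19\log n$ bound, and the nonvanishing of \eqref{eq:110} via $(\lambda_1/\lambda_2)^2=5F_{m_1}^2$ followed by the $34\log n$ estimate. The one detail you gloss over is that positivity of the norm $2-(-1)^{m_1}L_{2m_1}$ first forces $m_1$ to be odd, which is what makes the identity $L_{2m_1}+2=5F_{m_1}^2$ available.
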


We are almost there. We now look again at the equation
\begin{align*}
F_n+F_m=p^x q^y .
\end{align*}
The left-hand side is 
\begin{align*}
\frac{\alpha^n-(-1)^n \alpha^{-n}}{\sqrt{5}}+F_m=\frac{\alpha^{-n}}{\sqrt{5}} \left(\alpha^{2n}+{\sqrt{5}}F_m\alpha^n-(-1)^{m+1}\right),
\end{align*}
where we used the fact that $d\equiv 1\pmod 2$ so $n\equiv m+1\pmod 2$. We factor the trinomial
\begin{align*}
t^2-{\sqrt{5}}F_m t-(-1)^{m+1}=(t-\zeta_{1,m})(t-\zeta_{2,m}),
\end{align*}
where
\begin{align*}
\zeta_{1,m}=\frac{{\sqrt{5}}F_m+{\sqrt{5F_m^2+4(-1)^{m+1}}}}{2},\qquad \zeta_{2,m}=(-1)^m \zeta_{1,m}^{-1}.
\end{align*}
Note that $5F_m^2+4(-1)^{m+1}=L_m^2+8(-1)^{m+1}$ is not a perfect square since $m\ge 3$, so $L_m\ge 4$ (if this was a square of the same parity as $L_m$ it should be either at least $(L_m+2)^2=L_m^2+4L_m+4$, and $4L_m+4>8$, or at most $(L_m-2)^2=L_m^2-(4L_m-4)$, and $4L_m-4>8$ for $m\ge 3$). So, we may write 
$$
5F_m^2+8(-1)^{m+1}=\lambda_m \square,
$$
where $\lambda_m>1$ is squarefree and is coprime to $5$. We do this for $m_1,m_2$ and work in 
$${\mathbb L}:={\mathbb Q}({\sqrt{5}},{\sqrt{\lambda_{m_1}}},{\sqrt{\lambda_{m_2}}}),$$
which is a field of degree $4$ or $8$.   Inside this field, there is a prime ideal $\pi$ sitting above $p\mid F_{n_i}+F_{m_i}$ for $i=1,2$ such that
$$
\alpha^{n_i}\equiv \pm \beta_{1,m_i}^{\delta_i}\pmod \pi,\qquad \delta_i\in \{\pm 1\},\quad i=1,2.
$$
Since $n_i=d_i+m_i$, we get that 
\begin{equation}
\label{eq:n1minusn2}
|n_1-n_2|=|d_1-d_2+m_1-m_2|\le d_2-d_1+|m_2-m_1|\le 27\log n.
\end{equation}
Further,
$$
\alpha^{n_1-n_2}\equiv \pm \beta_{1,m_1}^{\delta_1} \beta_{1,m_2}^{-\delta_2}\pmod \pi.
$$ 
It follows that $\pi$ divides the algebraic integer
$$
w:=\alpha^{n_1-n_2}\pm \beta_{1,m_1}^{\delta_1}\beta_{1,m_2}^{-\delta_2}.
$$
This is an algebraic  integer since $\alpha,~\beta_{1,m_1},~\beta_{1,m_2}$ are all units. We will justify later that this is nonzero. Assuming that we have proved that, let us see how we can use this information to bound $p$. We have
$$
p\mid N_{{\mathbb L}/{\mathbb Q}} (\pi)\le N_{{\mathbb L}/{\mathbb Q}}(w).
$$
Now $w$ has $4$ or $8$ conjugates each at most as large as
\begin{equation}
\label{eq:maxw}
2\max\{\alpha^{|n_1-n_2|},\beta_{1,m_1}\beta_{1,m_2}\}.
\end{equation}
Note that
$$
\beta_{1,m_i}<{\sqrt{5F_{m_i}^2+8}}<\left(5\alpha^{2(m_i-1)}\left(1+\frac{8}{5\alpha^{2(m_i-1)}}\right)\right)^{1/2}<\alpha^{m_i+1/2},\qquad i=1,2,
$$
where we used the fact that $5<\alpha^4$, $F_{m_i}<\alpha^{m_i-1}$ and $8/(5\alpha^{2(m_i-1)})<1/\alpha$ since $m_i\ge 3$. Thus,
\begin{equation}
\label{eq:maxb1b2}
\beta_{1,m_1}\beta_{1,m_2}<\alpha^{m_1+m_2+1}<\alpha^{27\log n+1}.
\end{equation}
Thus, the house of $w$ is bounded above, by \eqref{eq:n1minusn2}, \eqref{eq:maxw} and \eqref{eq:maxb1b2} by 
$$
2\alpha^{27\log n+1}. 
$$
Thus,
$$
p\le (2\alpha^{27\log n+1})^8,
$$
giving
$$
\log p\le (8\cdot 27 \log n)\log \alpha+8\log 2<104\log n+6.
$$
Now, we can go back to \eqref{lf1} and use Theorem \ref{thm:Matg} to the left-hand side of \eqref{3.3g} with $q=5$ and obtain as in Lemma \ref{lem3.2g} that
\begin{align*}
	n<10^{35}(\log p)^4(\log 5)^2<10^{35}(104\log n+6)^4(\log 5)^2<4.2\cdot 10^{43}(\log n)^4.
\end{align*} 
	We apply Lemma \ref{Lem:Guz} to the above inequality with $ z:=n $, $ s:=4 $ and $T:=4.2\cdot 10^{43}$.
We get
$$n<2^s T(\log T)^s = 2^4 \cdot 4.2\cdot 10^{43}(\log 4.2\cdot 10^{43})^4 < 10^{53},$$
and
$\log p<104\log n+6<104\log 10^{53}+6$.
Therefore
\begin{align}\label{n-and-p2}
	\log p\le 12697\qquad\text{and}\qquad	n <  10^{53}.
\end{align}

It remains to justify that $w$ is nonzero. Assuming that it is we get the relation
\begin{equation}
	\label{eq:last}
	\alpha^{n_1-n_2}=\pm \beta_{1,m_1}^{\delta_1}\beta_{1,m_2}^{-\delta_2},\qquad \delta_1,\delta_2\in \{\pm 1\}.
\end{equation}
Since $\alpha,~\beta_{1,m_1},~\beta_{2,m_2}$ are positive, the sign must be $+$. We distinguish two cases according to $\lambda_{m_1}$ and $\lambda_{m_2}$. For example, it could be that $\lambda_{m_1}=\lambda_{m_2}(=\lambda)$. This happens exactly when  ${\mathbb L}$ has degree $4$. We apply the Galois automorphism of ${\mathbb L}$ 
which maps ${\sqrt{5}}$ to $-{\sqrt{5}}$ and ${\sqrt{\lambda}}$ to $-{\sqrt{\lambda}}$. Under this automorphism, each of $\beta_{1,m_1},~\beta_{1,m_2}$ goes to its negative, so the right--hand side of equation \eqref{eq:last} doesn't change but $\alpha^{n_1-n_2}$ goes to $\beta^{n_1-n_2}$. So, we must have $\alpha^{n_1-n_2}=\beta^{n_1-n_2}$ leading to $\alpha^{2(n_1-n_2)}=\pm 1$, which shows that $n_1=n_2$. A similar contradiction is reached if $\lambda_{m_1}$  and $\lambda_{m_2}$ are different. Since they are different, squarefree, larger than $1$ and coprime to $5$, ${\mathbb L}$ 
has degree $8$ and the map sending each of ${\sqrt{5}},~{\sqrt{\lambda_{m_1}}}$ and ${\sqrt{\lambda_{m_2}}}$ to their negatives is a Galois automorphism of ${\mathbb L}$. Applying this map  to our equation \eqref{eq:last}, we get again that the right--hand side doesn't change but the left--hand side maps to $\beta^{n_1-n_2}$. We get the same conclusion namely that $n_2=n_1$. But then 
$$
\beta_{1,m_1}^{\delta_1}=\beta_{1,m_2}^{\delta_2}
$$
and since $\beta_{1,m_i}>1$ for $i=1,2$, it follows that $\delta_1=\delta_2$. Looking at the coefficient of ${\sqrt{5}}$ in both sides of the equation  we get $F_{m_1}=F_{m_2}$ so 
$m_1=m_2$. Since also $n_1=n_2$, we get  $(n_1,m_1,x_1,y_1)=(n_2,m_2,x_2,y_2)$, a contradiction.

Now, comparing \eqref{n-and-p0}, \eqref{n-and-p1} and \eqref{n-and-p2}, we always have that 
\begin{align}\label{eq:n-bound}
	\log p\le 13752\qquad\text{and}\qquad	n < 10^{53},
\end{align}
regardless on whether the rows of \eqref{eq:mat} are linearly dependent or not, and regardless whether $m=0$, $m>0$, $d$ is odd or even.

We need to find a way of handling these huge bounds for effective computation, such that we complete this case when $q=5$. We go back to Lemma \ref{lem:lem3.3} and proceed in two cases, that is, when $\min\{d_1,d_2\}<34\log n$ and when $\min\{d_1,d_2\}\ge 34\log n$.

\medskip

\noindent{\bf The case $\min\{d_1,d_2\}<34\log n$}

\medskip

Suppose without loss of generality (as before) that $d_1\le d_2$. Since $n<10^{53}$, then $d_1:=\min\{d_1,d_2\}<34\log (10^{53})< 4150$. Recall that the case $m=0$ is not possible here (we already explained this at the start of Subsection \ref{subsec:3.4.2} under the case $m=0$ or $d$ is even. In fact, we reduced the bound in \eqref{eq:m=0} and computationally found no further solutions in this case). Therefore, $m\ge 1$. 

If $d$ is even, we are in the case where we deduced equation \eqref{eq:t3}, so we write
\begin{align*}
	\left|\dfrac{\log \sqrt5}{\log \alpha}-\dfrac{2(n_2-x_2(n_1-5))}{2y_2+1}\right|
	<  \frac{3n_2}{(2y_2+1)\alpha^{\min\{n_1,d_2\}}\log\alpha}.
\end{align*}
Using Lemma \ref{lem:Legendre} with $\mu:=\dfrac{\log \sqrt5}{\log \alpha} $ and $M:=2.1\cdot 10^{53}>2y_2+1>0$, we have 
\begin{align*}
	\dfrac{1}{(a(M)+2)(2y_2+1)^2}<\left|\dfrac{\log \sqrt5}{\log \alpha}-\dfrac{2(n_2-x_2(n_1-5))}{2y_2+1} \right|<
	\frac{3n_2}{(2y_2+1)\alpha^{\min\{n_1,d_2\}}\log\alpha},
\end{align*}
where $a(M)=29$ (that is, $q_{110}>2.1\cdot 10^{53}$ and $\max\{a_k: 0\le k\le 110\}=29$). The above inequality gives
\begin{align*}
	\dfrac{1}{(29+2)(2y_2+1)^2}&<\dfrac{3\cdot 10^{53}}{(2y_2+1)\alpha^{\min\{n_1,d_2\}}\log\alpha},
\end{align*}
so that 
\begin{align*}
	\alpha^{\min\{n_1,d_2\}}&<\dfrac{3\cdot 10^{53}\cdot 31(2y_2+1)}{\log\alpha}<4.1\cdot 10^{108},
\end{align*}
where we have used the upper bound $2y_2+1< 2.1\cdot 10^{53}$. Taking logarithms of both sides, we get $\min\{n_1,d_2\}<520$.

If $d_2:=\min\{n_1,d_2\}$, then $d_2<520$ so we go to \eqref{eq:t4} and consider the approximation lattice
$$ \begin{pmatrix}
	1 & 0  & 0 \\
	0 & 1 & 0 \\
	\lfloor M\log \alpha\rfloor & \lfloor M\log (1/5)\rfloor& \left\lfloor M\log\left(\dfrac{1+\alpha^{-d_2}}{\sqrt5}\right)\right\rfloor
\end{pmatrix},$$
with $M:= 3\cdot 10^{106}$ and choose $v:=\left(0,0,0\right)$. So, Lemma \ref{lem2.5g} gives $ c_2= 6.93\cdot 10^{110}$.
On the other hand, inequalities \eqref{eq:2.4g} and Lemma \ref{lem3.2g} imply $y_2,\,n_2-x_2(n_1-5)<n_2^2<10^{106}$, so
\[
A_i:=10^{106},\qquad \text{for}\quad i=1,2,3.
\]
Thus, Lemma \ref{lem2.6g} gives $S=3\cdot10^{212}$ and $T=1.51\cdot 10^{106}$. Since $c_2^2\ge T^2+S$, then choosing $c_3:=5n_2<5\cdot 10^{53}$ and $c_4:=\log\alpha$, we get $n_1\le 236$. This means that $x_1\log p<n_1<236$ from \eqref{eq:2.4g}, so that $\log p<236$ because $x_1>0$.

If $n_1:=\min\{n_1,d_2\}$, then $n_1<520$ and hence $\log p<520$. Thus, in both cases depending on what $\min\{n_1,d_2\}$ is, we have $\log p<520$. Therefore, Lemma \ref{lem3.2g} gives 
\begin{align}\label{case:nb1}
	n < 10^{35}(\log p)^4(\log q)^2<10^{35}(520)^4(\log 5)^2 < 2\cdot 10^{46}.
\end{align}
 We shall come back to this bound later.
 
If $m>0$ and $d$ is odd, then we go to \eqref{eq:22} and consider the approximation lattice
$$\begin{pmatrix}
	1 & 0  & 0 \\
	0 & 1 & 0 \\
	\lfloor M\log \alpha\rfloor & \lfloor M\log (1/5)\rfloor& \left\lfloor M\log\left(\dfrac{1+\alpha^{-d_1}}{\sqrt5}\right)\right\rfloor
\end{pmatrix},$$
with $M:= 10^{319}$, $v:=\left(0,0,0\right)$ and $d_1\in [5,4150]$, odd. So, Lemma \ref{lem2.5g} gives $ c_2= 10^{108}$.
On the other hand, inequalities \eqref{eq:2.4g} and Lemma \ref{lem3.2g} imply that the coefficients of the linear form in \eqref{eq:22} are bounded by $2n^2<2\cdot 10^{106}$ so
\[
A_i:=2\cdot 10^{106},\qquad \text{for}\quad i=1,2,3.
\]
Now, Lemma \ref{lem2.6g} gives $S=1.2\cdot10^{213}$ and $T=3.1\cdot 10^{106}$. Since $c_2^2\ge T^2+S$, then choosing $c_3:=10n<10^{54}$ and $c_4:=\log\alpha$, we get $\min\{n_1,d_2\}\le 1268$.

If $n_1:=\min\{n_1,d_2\}$, we get $n_1< 1268$ and so
$\log p<1268$. Therefore, Lemma \ref{lem3.2g} gives 
\begin{align*}
	n < 10^{35}(\log p)^4(\log q)^2<10^{35}(1268)^4(\log 5)^2 < 7\cdot 10^{47}.
\end{align*}

If $d_2:=\min\{n_1,d_2\}$, then $d_2< 1268$ so we go to \eqref{eq:ineq_aa} and consider the approximation lattice
\[
\begin{pmatrix} 
	1 & 0 & 0 & 0\\ 
	0 & 1 & 0 & 0\\ 
	0 & 0 & 1 & 0\\ 
	\lfloor M\log \alpha\rfloor & \lfloor M\log (1/5)\rfloor & \left\lfloor M\log \left(\frac{1+\alpha^{-d_1}}{\sqrt{5}}\right) \right\rfloor& \left\lfloor M\log \left(\frac{\sqrt{5}}{1+\alpha^{-d_2}}\right) \right\rfloor
\end{pmatrix},
\]
with $M:= 10^{425}$, $v:=\left(0,0,0,0\right)$ and $d_1\in [5,4150]$, $d_2\in [d_1,1268]$, both odd. So, Lemma \ref{lem2.5g} gives $ c_2= 10^{110}$.
On the other hand, inequalities \eqref{eq:2.4g} and Lemma \ref{lem3.2g} imply the coefficients of the linear form in \eqref{eq:ineq_aa} are bounded by $n^2<10^{106}$ so
\[
A_i:=10^{106},\qquad \text{for}\quad i=1,2,3,4.
\]
So, Lemma \ref{lem2.6g} gives $S=4\cdot10^{212}$ and $T=2.1\cdot 10^{106}$. Since $c_2^2\ge T^2+S$, then choosing $c_3:=8n<8\cdot 10^{53}$ and $c_4:=\log\alpha$, we get $\min\{n_1,n_2\}\le 1765$. So, depending on whether this minimum is $n_1$ or $n_2$, we shall always have $\log p\le 1765$, as explained before. Therefore, Lemma \ref{lem3.2g} gives 
\begin{align*}
	n < 10^{35}(\log p)^4(\log q)^2<10^{35}(1765)^4(\log 5)^2 < 5\cdot 10^{48}.
\end{align*}

Thus, in all cases,
\begin{align}\label{nb-fin}
	\log p\le 1765\qquad\text{and}\qquad	n < 5\cdot 10^{48}.
\end{align}
So, in this case when $\min\{d_1,d_2\}< 34\log n$, we conclude from \eqref{case:nb1} and \eqref{nb-fin} that always $n_1\le 1765$ and $n_2 < 5\cdot 10^{48}$.

To further refine the upper bound on $n_2$, we adopted a computational approach that mirrors our conclusion in Subsection \ref{subsec:3.4.1}. Again, we write an efficient search algorithm to identify  primes $p$ satisfying the equation $F_{n_1} + F_{m_1} = p^{x_1}5^{y_1}$ for the specified ranges $n_1 \le 1765$ and $0 < m_1 \le n_1-2$. A complete brute-force factorization of every such sum proved computationally infeasible due to the immense scale of the numbers involved.
We focus on the prime $q=5$, and use properties of primality testing to circumvent this challenge. 

The algorithm first iterates through all valid pairs $(n_1, m_1)$ where the sum $F_{n_1} + F_{m_1}$ is divisible by 5. For each such instance, we calculate the 5-adic valuation $y_1 = \nu_5(F_{n_1} + F_{m_1})$ and define the integer $B := (F_{n_1} + F_{m_1})/5^{y_1}$. To quickly filter for candidates where $B$ is a prime power, we applied a rapid filtering test based on Fermat's Little Theorem. Again, using SageMath's highly optimized \texttt{power\_mod} command, we computed the greatest common divisor $C := \gcd(\text{mod}(2^B-2, B), B)$. This test quickly filters for candidates where $B$ is likely a prime power, as $C>1$ must hold in such cases. For each candidate, we then perform a definitive check by factoring $B$ to verify that it is indeed a prime power.

This optimized computational procedure yielded a total of 263 instances, verifying that in every identified case, $B$ is indeed a prime power with the exponent $x_1$ of $p$ being only $1,2$ or $3$. The resulting primes $p$ are subsequently used in the LLL-reduction, allowing us to establish a more stringent bound on $n_2$ for any remaining potential solutions.

We now use each of these primes $p$ to significantly reduce the upper bound on $n_2$. For a given prime $p$, we consider the linear form in logarithms derived from \eqref{3.1g}. Assuming $d_2\ge 5$, we can write
\begin{align*}
	\left|n_2\log \alpha-x_2\log p-y_2\log 5-\log \sqrt{5}\right|=\left|n_2\log \alpha-x_2\log p-(2y_2+1)\log \sqrt{5}\right|< \frac{6}{\alpha^{d_2}},
\end{align*}
where we used \eqref{eq2.5g}. So, for each $p$ obtained from our algorithm, we consider the approximation lattice
$$\begin{pmatrix}
	1 & 0  & 0 \\
	0 & 1 & 0 \\
	\lfloor M\log (1/p)\rfloor & \lfloor M\log (1/\sqrt5)\rfloor& \lfloor M\log\alpha \rfloor
\end{pmatrix},$$
with $M:= 2\cdot 10^{146}$ and choose $v:=\left(0,0,0\right)$. Now, by Lemma \ref{lem2.5g}, we get 
$$c_1= 10^{-55}\qquad \text{and}\qquad c_2=3.18\cdot 10^{51}.$$
Moreover, by inequalities \eqref{eq:2.4g} and Lemma \ref{lem3.2g}, we have $x_2$, $2y_2+1$, $n_2<2.1n_2<1.1\cdot10^{49}$ so
\[
A_i:=1.1\cdot10^{49},\qquad \text{for}\quad i=1,2,3.
\]
Hence, Lemma \ref{lem2.6g} gives $S=3.36\cdot 10^{98}$ and $T=1.66\cdot 10^{49}$. Since $c_2^2\ge T^2+S$, then choosing $c_3:=6$ and $c_4:=\log\alpha$, we get $d_2\le 457$.

Next, for each $p$ obtained from our algorithm and $d_2\le 457$, we consider the linear form derived
from \eqref{3.3g}, given by
\begin{align*}
	\left|m_2\log \alpha-x_2\log p-y_2\log 5-\log \left(\dfrac{1+\alpha^{-d_2}}{\sqrt5}\right)\right|< \frac{3}{\alpha^{n_2}},
\end{align*}
with the absolute values of integer coefficients not exceeding $n_2 <  5\cdot 10^{48}$.
We consider the approximation lattice
$$ \begin{pmatrix}
	1 & 0  & 0 \\
	0 & 1 & 0 \\
	\lfloor M\log (1/p)\rfloor & \lfloor M\log (1/5)\rfloor& \lfloor M\log\alpha \rfloor
\end{pmatrix},$$
where we set $M :=10^{147}$ and $v :=\left(0,0,-\lfloor M\log \left(\sqrt5 /(1+\alpha^{-d_2})\right) \rfloor\right)$.
By running the algorithm for all primes $p$, we find by Lemma \ref{lem2.5g} that
$$c_1= 10^{-52}\qquad \text{and}\qquad c_2=2.7\cdot 10^{50}.$$
Moreover, by inequalities \eqref{eq:2.4g} and Lemma \ref{lem3.2g}, we have $x_2$, $y_2$, $m_2<n_2<5\cdot10^{48}$ so
\[
A_i:=5\cdot10^{48},\qquad \text{for}\quad i=1,2,3.
\]
Thus, Lemma \ref{lem2.6g} gives $S=7.5\cdot 10^{97}$ and $T=7.51\cdot 10^{48}$. Since $c_2^2\ge T^2+S$, then choosing $c_3:=3$ and $c_4:=\log\alpha$, we get $n_2\le 464$.

To conclude, we use SageMath again to check whether these primes $p$ and $q=5$ have at least two representations of \eqref{eq:main} with each fixed $p^x 5^y$. Still, we do not find any further solutions than those listed in the main result.

\medskip

\noindent{\bf The case $\min\{d_1,d_2\}\ge 34\log n$}

\medskip

In this situation, we have that $\log p\le 13752$ and $n < 10^{53}$ from \eqref{eq:n-bound}. Applying Lemma \ref{lem:lem3.3}, we deduce that
\begin{align*} 
	|d_2-d_1|&\le 1220, \\
	\min\{m_1,m_2\}&\le 1220, \\
	\max\{m_1,m_2\}&\le 2074.
\end{align*}
Let us assume $m \le 2074$. To bound the exponent $y$ of $5$ in the equation $F_n + F_m = p^x 5^y$, we analyze the $5$-adic valuation $\nu_5(F_n + F_m)$. A direct search up to $n < 10^{53}$ is computationally infeasible. Instead, we use a lifting algorithm based on the periodicity of the Fibonacci sequence modulo powers of $5$, which is also similar to a methodology of the computational search in \cite{aded}.

Recall that the Pisano period of the Fibonacci sequence modulo $5^k$ is $\pi(5^k) = 4 \cdot 5^k$. For a fixed $m \in [1, 2074]$, we determine the values of $n$ such that $F_n + F_m \equiv 0 \pmod{5^k}$ recursively, using the algorithm described below and in Appendix \ref{app4}.
\begin{enumerate}[(a)]
	\item \textbf{Base step ($k=1$):} We compute $F_n + F_m \pmod 5$ for $n \in [0, 20)$. We select the indices $n_0$ such that $F_{n_0} + F_m \equiv 0 \pmod 5$ and $n_0 \not\equiv m \pmod 2$. The parity condition ensures we are analyzing the case where $d = n-m$ is odd (as the even case was handled previously).
	\item \textbf{Lifting step:} Given a solution $n_{k-1}$ modulo $4 \cdot 5^{k-1}$, we look for solutions modulo $4 \cdot 5^k$ of the form $n_k = n_{k-1} + j \cdot (4 \cdot 5^{k-1})$ for $j \in \{0, 1, 2, 3, 4\}$. To handle the large indices $n \approx 10^{53}$ efficiently, we use the fact that the Fibonacci sequence can be computed via matrix powers. Specifically, we precompute the matrices $M_k = \mathcal{A}^{4 \cdot 5^k} \pmod{5^{100}}$, where 
	$$\mathcal{A} = \begin{pmatrix} 1 & 1 \\ 1 & 0 \end{pmatrix}.$$
	Instead of calculating $\mathcal{A}^n$ from scratch for each candidate, we compute the state at each lift level $k$ by performing at most four matrix multiplications: $\mathcal{A}^{n_k} = (\mathcal{A}^{4 \cdot 5^{k-1}})^j \cdot \mathcal{A}^{n_{k-1}} \pmod{5^{100}}$. This ``successive 5th powering" approach ensures that the total number of matrix operations is logarithmic ($O(\log n)$), making the search up to $10^{53}$ computationally trivial.
	\item \textbf{Termination:} We iterate this lifting process until the modulus $4 \cdot 5^k$ exceeds the bound $10^{53}$.
\end{enumerate}
Using this algorithm for all $m \in [1, 2074]$, we computed the maximum possible valuation for $n < 10^{53}$. The computation establishes the strict upper bound
\begin{equation*}
	y = \nu_5(F_n + F_m) \le 75.
\end{equation*}

With $y$ bounded by a small constant, we proceed to prove the uniqueness of the solution. Suppose there exist two distinct solutions $(n_1, m_1, x_1, y_1)$ and $(n_2, m_2, x_2, y_2)$ with $n_1 \le n_2$.
From the main equation $F_n + F_m = p^x 5^y$, we derive bounds using the Binet formula. Since $\alpha^{n-2} < F_n < F_n + F_m < F_{n+1} < \alpha^{n}$, we have
$$
\alpha^{n_1-2} < p^{x_1} 5^{y_1} < \alpha^{n_1} \quad \text{and} \quad \alpha^{n_2-2} < p^{x_2} 5^{y_2} < \alpha^{n_2}.
$$
Dividing these inequalities implies
\begin{equation}\label{eq:ratio}
	p^{|x_2-x_1|} \le 5^{|y_2-y_1|} \alpha^{|n_2-n_1|+2}.
\end{equation}
We now know that $|y_2-y_1| \le 75$ and 
\begin{align*}
	|n_2-n_1| &=|(d_2 + m_2)-(d_1+m_1)|\\
	& \le |d_2-d_1| + |m_2-m_1|\\
	&\le 1220+2074  = 3294.
\end{align*}
Since $p > 1000$, inequality \eqref{eq:ratio} constrains the difference $|x_2-x_1|$ to be very small (specifically $|x_2-x_1| \le 247$).

Now, we go back to the matrix \eqref{eq:mat} which has rank 1. So, the matrix of rows
$$
 M=\begin{pmatrix} n_2-n_1 & x_2-x_1 & y_2-y_1 \\ n_2 & x_2 & y_2 + \delta \end{pmatrix},
$$
where $\delta = \log_5(\sqrt{5})=1/2$, must also have rank 1 since we are working with two distinct solutions $(n_1, m_1, x_1, y_1)$ and $(n_2, m_2, x_2, y_2)$. Note that $x_2-x_1$ is not zero since the matrix $M$ has rank 1 (if $x_2=x_1$ and matrix has rank one meaning second row is a multiple of first row, we then get $n_1=n_2$, $y_1=y_2$, contradiction). Thus, the rows are proportional and we can write
$$
\frac{n_2}{n_2-n_1} = \frac{x_2}{x_2-x_1} = \frac{y_2 + \delta}{y_2-y_1}.
$$
This implies 
\begin{align*}
n_2 = (n_2-n_1) \frac{y_2+1/2}{y_2-y_1} < 3294 \cdot \dfrac{76}{1}=250344.
\end{align*}

To complete the proof for this range, we performed an exhaustive computational search for pairs $(n, m)$ such that $F_n + F_m = 5^y p^x$ with $p > 1000$, within the bounds $ n \le 250344$ and $m \le 2074$. 

Given this range for $n$, we optimized the search by first identifying $n \pmod{20}$ such that $F_n + F_m \equiv 0 \pmod 5$. For each such candidate pair, we calculated the $5$-adic valuation $y$ and the remaining part $B = (F_n + F_m)/5^y$. We used SageMath's \texttt{is\_prime\_power()} function to determine if $B$ is a power of a prime $p > 1000$.

The algorithm identified 7143 potential primes $p$. For each such prime, we perform the LLL reduction to reduce the upper bound on $n_2$. For a given prime $p$, we again consider the linear form in logarithms derived from \eqref{3.1g}. If $d_2\ge 5$, we write
\begin{align*}
\left|n_2\log \alpha-x_2\log p-(2y_2+1)\log \sqrt{5}\right|< \frac{6}{\alpha^{d_2}},
\end{align*}
where we used \eqref{eq2.5g}. So, for each $p$ obtained from our algorithm, we consider the approximation lattice
$$\begin{pmatrix}
	1 & 0  & 0 \\
	0 & 1 & 0 \\
	\lfloor M\log (1/p)\rfloor & \lfloor M\log (1/\sqrt5)\rfloor& \lfloor M\log\alpha \rfloor
\end{pmatrix},$$
with $M:= 1.5\cdot 10^{17}$ and choose $v:=\left(0,0,0\right)$. Using Lemma \ref{lem2.5g}, we get 
$$c_1= 10^{-11}\qquad \text{and}\qquad c_2=2.7\cdot 10^{14}.$$
Moreover, by inequalities \eqref{eq:2.4g} and Lemma \ref{lem3.2g}, we have $x_2$, $2y_2+1$, $n_2<2.1n_2<525723$, so
\[
A_i:=525723,\qquad \text{for}\quad i=1,2,3.
\]
Thus, Lemma \ref{lem2.6g} gives $S=8.3\cdot 10^{11}$ and $T=788585$. Since $c_2^2\ge T^2+S$, then choosing $c_3:=6$ and $c_4:=\log\alpha$, we get $d_2\le 16$.

Next, for each $p$ obtained from our algorithm and $d_2\le 16$, we use the linear form derived
from \eqref{3.3g}, as
\begin{align*}
	\left|m_2\log \alpha-x_2\log p-y_2\log 5-\log \left(\dfrac{1+\alpha^{-d_2}}{\sqrt5}\right)\right|< \frac{3}{\alpha^{n_2}},
\end{align*}
with the absolute values of integer coefficients not exceeding $n_2 <  250344$.
We consider the approximation lattice
$$ \begin{pmatrix}
	1 & 0  & 0 \\
	0 & 1 & 0 \\
	\lfloor M\log (1/p)\rfloor & \lfloor M\log (1/5)\rfloor& \lfloor M\log\alpha \rfloor
\end{pmatrix},$$
where we set $M :=10^{18}$ and $v :=\left(0,0,-\lfloor M\log \left(\sqrt5 /(1+\alpha^{-d_2})\right) \rfloor\right)$.
Using Lemma \ref{lem2.5g}, we find that
$$c_1= 10^{-9}\qquad \text{and}\qquad c_2=8.1\cdot 10^{13}.$$
Moreover, by inequalities \eqref{eq:2.4g} and Lemma \ref{lem3.2g}, we have $x_2$, $y_2$, $m_2<n_2<250344$ so
\[
A_i:=250344,\qquad \text{for}\quad i=1,2,3.
\]
Thus, Lemma \ref{lem2.6g} gives $S=1.9\cdot 10^{11}$ and $T=375517$. Since $c_2^2\ge T^2+S$, then choosing $c_3:=3$ and $c_4:=\log\alpha$, we get $n_2\le 21$.

Since $n_1<n_2$, we use SageMath again to check whether these primes $p$ and $q=5$ have at least two representations of \eqref{eq:main} with each fixed $p^x 5^y$. We do not find any further solutions than those listed in the main result.

This completes the Proof of Theorem \ref{th:main}.

\section*{Acknowledgments}

The first author acknowledges the hospitality and support of the Department of Mathematics at the University of Salzburg during his visit in 2024, a period in which part of this project was initiated. This research visit was made possible through the funding provided by the OEAD project of the Africa Uni-Net, project P105 (EREDE). He also extends his sincere gratitude to the Mathematics Division of Stellenbosch University for funding his PhD studies.

\vspace{-0.1cm}
\section*{Addresses}

$ ^{1} $ Mathematics Division, Stellenbosch University, Stellenbosch, South Africa.

Email: \url{hbatte91@gmail.com}

Email: \url{fluca@sun.ac.za}

\noindent 
$ ^{2} $ University of Salzburg, Hellbrunnerstrasse 34/I,
A--5020 Salzburg, Austria

Email: \url{volker.ziegler@sbg.ac.at}

\newpage
\appendix
\section{Appendices}
\subsection{SageMath Code I: Search for solutions}\label{app1}
\begin{verbatim}
N_MAX = 378 
P_MAX = 1000
MIN_SOLUTIONS = 2 
print(f"Starting search with corrected logic for p > q.")
print(f"  -> Search ranges: 2 <= q < p <= {P_MAX} and 0 <= m <= n <= {N_MAX}.\n")

fib = [0, 1]
while len(fib) <= N_MAX:
  fib.append(fib[-1] + fib[-2])
solutions_by_primes = {}

for n in range(2, N_MAX + 1):
  for m in range(n + 1):
    fib_sum = fib[n] + fib[m]
    if fib_sum <= 1:
      continue
    try:
      prime_factors = fib_sum.factor()
    except ValueError:
      continue

    if len(prime_factors) == 2:
      p1_check, x = prime_factors[0]
      p2_check, y = prime_factors[1]
      
      if x > 0 and y > 0:
        if 2 <= p1_check < p2_check <= P_MAX:
          prime_pair = (p2_check, p1_check)
          if prime_pair not in solutions_by_primes:
            solutions_by_primes[prime_pair] = []
          solutions_by_primes[prime_pair].append({
            'n': n,
            'm': m,
            'p_exp': x,
            'q_exp': y,
            'value': fib_sum
          })
print("Finished searching. Now filtering and printing results.\n")
found_any = False
for prime_pair, solutions in sorted(solutions_by_primes.items()):
  if len(solutions) >= MIN_SOLUTIONS:
    found_any = True
    p, q = prime_pair
    print(f"Found {len(solutions)} representations for primes (p, q) = ({p}, {q})")
    for sol in solutions:
      print(f"  F_{sol['n']} + F_{sol['m']} = {sol['value']} 
      = {p}^{sol['p_exp']} * {q}^{sol['q_exp']}")
    print("\n")

if not found_any:
  print(f"No prime pairs (p, q) were found with at least {MIN_SOLUTIONS} 
  solutions in the given ranges.")

\end{verbatim}

\subsection{SageMath Code II: Checking for multiplicative dependence}\label{app2}
\begin{verbatim}
alpha = (1 + sqrt(5)) / 2
sqrt5 = sqrt(5)
R = RealField(200) # Increased precision for better accuracy

MAX_D = 1000
MAX_Q = 1000
EXP_BOUND = 10 # Exponents for the linear form check in [-10, 10]

found_dependencies = []

print("Starting optimized search for multiplicative dependencies.")
print(f"  -> Search ranges: 2 <= d1 < d2 <= {MAX_D} and q <= {MAX_Q}.\n")

fib_cache = {0: 0, 1: 1}
def F(n):
  if n in fib_cache:
    return fib_cache[n]
  if n < 0:
    return 0 # F_n is typically non-negative
  fib_cache[n] = F(n - 1) + F(n - 2)
    return fib_cache[n]

lucas_cache = {0: 2, 1: 1}
def L(n):
  if n in lucas_cache:
    return lucas_cache[n]
  if n < 0:
    return 0 # L_n is typically non-negative
  lucas_cache[n] = L(n - 1) + L(n - 2)
    return lucas_cache[n]
    
def is_multiplicatively_dependent(d1, d2, q, exp_bound=EXP_BOUND):
  if d1 == 10 or d2 == 10: # Condition from the professor's notes
    return False, None
  if q == 5:
    return False, None

  a = R(alpha)
  s5 = R(sqrt5)

  L_values = [
    log(a), # log(alpha)
    log(R(q)), # log(q)
    log((1 + a^(-d1)) / s5),
    log((1 + a^(-d2)) / s5)
  ]

  for b1 in range(-exp_bound, exp_bound + 1):
    for b2 in range(-exp_bound, exp_bound + 1):
      for b3 in range(-exp_bound, exp_bound + 1):
        for b4 in range(-exp_bound, exp_bound + 1):
          if [b1, b2, b3, b4] == [0, 0, 0, 0]:
            continue
          if b3 == 0 or b4 == 0:
            continue

          val = b1*L_values[0] + b2*L_values[1] + b3*L_values[2] + b4*L_values[3]
          if abs(val) < 1e-30:
            return True, (b1, b2, b3, b4)
  return False, None

def get_relevant_factors(d):
  if d <= 1:
    return None
  if d % 2 == 0: # Case for F_{d/2} and L_{d/2}
    n = d // 2
    try:
      factors_F = F(n).factor()
      if any(p[0] > MAX_Q for p in factors_F):
        return None
    except (ValueError, TypeError):
      factors_F = []

    try:
      factors_L = L(n).factor()
      if any(p[0] > MAX_Q for p in factors_L):
        return None
    except (ValueError, TypeError):
      factors_L = []

    return set(p[0] for p in factors_F) | set(p[0] for p in factors_L)
  else: 
    try:
      factors = L(d).factor()
      if any(p[0] > MAX_Q for p in factors):
        return None
      return set(p[0] for p in factors)
    except (ValueError, TypeError):
      return None
      
print("--- Starting search Part (a): d1 <= 12 ---")
for d1 in range(2, 13): # 2 <= d1 <= 12
  for d2 in range(d1 + 1, MAX_D + 1):
    p_factors_d2 = get_relevant_factors(d2)
    if p_factors_d2 is None:
      continue
    p_factors_d1 = get_relevant_factors(d1)
    if p_factors_d1 is None:
      continue
    candidate_q_set = p_factors_d1 | p_factors_d2

    for q in candidate_q_set:
      if q > MAX_Q:
        continue

      dep, relation = is_multiplicatively_dependent(d1, d2, q)
      if dep:
          found_dependencies.append((d1, d2, q, relation))
          print(f"Part (a) -> FOUND DEPENDENCY: (d1, d2, q) = ({d1}, {d2}, {q}), 
          exponents={relation}")
print("\n--- Starting search Part (b): d1 > 13 and d1|d2 ---")
odd_primes = prime_range(3, 1001)

for d1 in range(14, MAX_D + 1):
  for r in odd_primes:
    d2 = d1 * r
    if d2 > MAX_D:
       break

    p_factors_d1 = get_relevant_factors(d1)
    if p_factors_d1 is None:
      continue
      
    p_factors_d2 = get_relevant_factors(d2)
    if p_factors_d2 is None:
      continue
      
    if p_factors_d1.issubset(p_factors_d2):
      new_primes = p_factors_d2 - p_factors_d1

      if len(new_primes) == 1:
        q = list(new_primes)[0]
        if q <= MAX_Q:
          dep, relation = is_multiplicatively_dependent(d1, d2, q)
          if dep:
            found_dependencies.append((d1, d2, q, relation))
            print(f"Part (b) -> FOUND DEPENDENCY: (d1, d2, q) = ({d1}, {d2}, {q}), 
                   exponents={relation}")

print("\n--- Search Complete ---")
if found_dependencies:
  print(f"Found {len(found_dependencies)} total multiplicative dependencies:")
  for d1, d2, q, relation in sorted(found_dependencies):
    print(f"  (d1, d2, q) = ({d1}, {d2}, {q}), exponents={relation}")
else:
  print("No multiplicative dependencies found with the given criteria.")
\end{verbatim}

\subsection{SageMath Code III:  Finding $(p, q)$ prime pairs with $p>1000$ and $q\le 1000$}\label{app3}
\begin{verbatim}
import sys

N1_MAX = 1988
Q_MAX = 1000
EXP_BOUND = 10

# --- Step 1: Redirect standard output to a file ---
output_filename = 'prime_pairs_output.txt'
original_stdout = sys.stdout

try:
  with open(output_filename, 'w') as f:
    sys.stdout = f

    print("Starting search for prime pairs (q, p) with F_n1 + F_m1 = q^y1 * p^x1.")
    print(f"  -> Search ranges: n1 <= {N1_MAX}, m1 <= n1-2, q <= {Q_MAX} (q != 5).\n")

    # --- Step 2: Pre-generate Fibonacci numbers and a list of primes ---
    fib_cache = {0: 0, 1: 1}
    def F(n):
      if n in fib_cache:
        return fib_cache[n]
      if n < 0:
        return 0
      fib_cache[n] = F(n - 1) + F(n - 2)
      return fib_cache[n]

    q_primes = [p for p in primes(2, Q_MAX + 1) if p != 5]

    # --- Step 3: Initialize a data structure to store the solutions ---
    found_prime_pairs = {}
    solution_count = 0

    # --- Step 4: Loop over all triples (n1, m1, q) ---
    for n1 in range(1, N1_MAX + 1):
      if n1 % 100 == 0:
        # Print progress to the file, as there's no console output
        print(f"  ... Processing n1 = {n1} ...")

      # m1 <= n1 - 2
      for m1 in range(0, n1 - 1):
        fib_sum = F(n1) + F(m1)

        if fib_sum <= 1:
          continue

        for q in q_primes:
          if fib_sum % q != 0:
            continue

          # --- Step 5: Calculate the exponent y1 and the number B ---
          y1 = valuation(fib_sum, q)
          B = fib_sum // (q**y1)

          if y1 > 1:
            pass

          # --- Step 6: Quick check for primality of B using powermod ---
          try:
             mod_val = power_mod(2, B, B) - 2
             C = gcd(mod_val, B)
          except ValueError:
             continue

          # --- Step 7: Test if B is a power of a single prime ---
          if C > 1:
            try:
               factors_of_C = factor(C)
               if len(factors_of_C) == 1:
                 p_candidate = factors_of_C[0][0]
                 if p_candidate >1000:
                   is_prime_power = True
                   temp_B = B
                   while temp_B % p_candidate == 0:
                     temp_B //= p_candidate
                   if temp_B == 1:
                     # Found a valid solution.
                     prime_pair = (q, p_candidate)

                     if prime_pair not in found_prime_pairs:
                       found_prime_pairs[prime_pair] = []

                     found_prime_pairs[prime_pair].append({
                       'n1': n1,
                       'm1': m1,
                       'y1': y1,
                       'B': B
                     })
                     solution_count += 1
            except (ValueError, TypeError):
              continue

    # --- Step 8: Print the final results to the file ---
    print("\n--- Search Complete ---")
    print(f"Found {solution_count} total candidate solutions, which fall 
         into {len(found_prime_pairs)} distinct (q, p) pairs.")
    print("\nPrime Pairs (q, p) and their corresponding Fibonacci sums:")

    for prime_pair in sorted(found_prime_pairs.keys()):
       q, p = prime_pair
       solutions = found_prime_pairs[prime_pair]

       print(f"\n  -> Pair (q, p) = ({q}, {p}) found with {len(solutions)} 
            representations:")
       for sol in solutions:
         n1 = sol['n1']
         m1 = sol['m1']
         y1 = sol['y1']
         print(f"     F_{n1} + F_{m1} = {F(n1) + F(m1)} = {q}^{y1} 
               * {p}^{valuation(sol['B'], p)}")

finally:
  # --- Step 9: Restore standard output and print final message to the console ---
  sys.stdout = original_stdout
  print(f"\nSearch complete. The full output has been written to {output_filename}")

\end{verbatim}

\subsection{SageMath Code IV: The case $\min\{d_1,d_2\}\ge 34\log n$}\label{app4}
\begin{verbatim}
def solve_fib_5_adic_lifting():
  print("Starting 5-adic lifting search using successive 5th powering...")

  # Bounds from theory
  MAX_N_BOUND = 10**53  
  MAX_M = 2074
  A = matrix(ZZ, [[1, 1], [1, 0]])
  max_val = 0

  # Precompute lifting matrices: M[j] = A^(4 * 5^j)
  # This ensures O(log n) complexity
  M = [A^20]
  for j in range(80): 
    M.append(M[-1]^5)

  for m in range(1, MAX_M + 1):
    if m % 500 == 0: print(f"Checking m = {m}...")
    Fm = fibonacci(m)

    # Base Step: n modulo 20
    seeds = []
    for n0 in range(20):
      Fn0 = (A^n0)[0,1]
      if (Fn0 + Fm) % 5 == 0 and (n0 % 2) != (m % 2):
        seeds.append((n0, A^n0))

    # Lifting Step
    for n_val, mat_n in seeds:
      curr_period = 20
      v = 1
      j_level = 0

      while curr_period < MAX_N_BOUND:
        # Find current valuation v
        while (mat_n[0,1] + Fm) % 5^(v+1) == 0:
          v += 1
        max_val = max(max_val, v)

        # Lift to next level using precomputed M[j_level]
        found = False
        mod_next = 5^(v+1)
        for k in range(5):
          # mat_next = (A^(curr_period))^k * mat_curr
          mat_cand = (M[j_level]^k) * mat_n
          if (mat_cand[0,1] + Fm) % mod_next == 0:
            n_val += k * curr_period
            mat_n = mat_cand
            curr_period *= 5
            j_level += 1
            found = True
            break
  
         if not found or n_val > MAX_N_BOUND: break

  print(f"\nSearch complete. Maximum 5-adic valuation y = {max_val}")

solve_fib_5_adic_lifting()
	
\end{verbatim}
\end{document}